\numberwithin{equation}{section}
\theoremstyle{plain}
\newtheorem{theorem}[subsection]{Theorem}
\newtheorem{proposition}[subsection]{Proposition}
\newtheorem{lemma}[subsection]{Lemma}
\newtheorem{corollary}[subsection]{Corollary}
\newtheorem{remark}[subsection]{Remark}
\theoremstyle{definition}
\newtheorem{definition}[theorem]{Definition}
\newtheorem{example}[theorem]{Example}
\newcommand{\sign}{\text{sign}}
\newcommand{\vast}{\bBigg@{4}}
\newcommand{\Vast}{\bBigg@{5}}
\newcommand*\bigcdot{\mathpalette\bigcdot@{.5}}
\newcommand*\bigcdot@[2]{\mathbin{\vcenter{\hbox{\scalebox{#2}{$\m@th#1\bullet$}}}}}
\newcommand*{\medcap}{\mathbin{\scalebox{1.5}{\ensuremath{\cap}}}}
\@date \else {\vskip3ex \centering\@date\par\vskip1ex}\fi
\else \@footnotetext{\@setdate}\fi}
\renewcommand{\leq}{\leqslant}
\renewcommand{\geq}{\geqslant}
\def\vs{\vspace{0.2cm}}
\def\ni{\noindent}
\def\emph#1{{\it #1}}
\def\textbf#1{{\bf #1}}
\begin{document}
	
\title{Departure-based asymptotic stochastic order for random processes}

\author{Sugata Ghosh and Asok K. Nanda}
\address{Department of Mathematics and Statistics\\
	Indian Institute of Science Education and Research Kolkata\\
	Mohanpur 741246, India}
\email{sg18rs017@iiserkol.ac.in}
\email{asok@iiserkol.ac.in}

\subjclass[2010]{60E15, 60G07, 62G30} 

\keywords{Asymptotic analysis, stochastic process, Mixture distribution, Distortion function, order statistics, record values}

\allowdisplaybreaks
\raggedbottom

\begin{abstract}
	In \citet{GN_2021_A}, we have introduced the notion of asymptotic stochastic comparison of stochastic processes. In the present work, we propose and analyze a specific asymptotic stochastic order for random processes based on the measure of departure discussed in the literature. As applications, we stochastically compare mixtures of order statistics and record values coming from two different homogeneous samples, as the sample size becomes large.
\end{abstract}

\maketitle
\markleft{\uppercase{Departure-based asymptotic stochastic order}}
\markright{\uppercase{Sugata Ghosh and Asok K. Nanda}}

\section{Introduction}\label{2-section-introduction}

Stochastic orders are extensively studied in the literature in the context of comparing random variables. They have been extended to compare random vectors, but not much exploration has been done in the context of comparing stochastic processes. In the present work, we attempt to compare stochastic processes in an asymptotic sense. Let $X$ and $Y$ be two random variables. $X$ is said to be smaller than $Y$ in {\it usual stochastic order} (denoted by $X \leq_{\textnormal{st}} Y$) if, for every $x \in \mathbb{R}$, we have $P(X>x) \leq P(Y>x)$. We say that $X$ is equal to $Y$ in usual stochastic order (denoted by $X =_{st} Y$) if $X \leq_{\textnormal{st}} Y$ and $Y \leq_{\textnormal{st}} X$. If $F_X$ and $F_Y$ denote the respective cdfs of $X$ and $Y$, then the condition $P(X>x) \leq P(Y>x)$ can be written as $F_X(x) \geq F_Y(x)$ for every $x \in \mathbb{R}$. Let $\left\{x_1,x_2,\ldots,x_n\right\}$ and $\left\{y_1,y_2,\ldots,y_n\right\}$ be two samples taken from two populations. The empirical cdfs based on these two samples are defined respectively by $F_n(x)=n^{-1}\sum_{i=1}^n 1_{(x_i,\infty)}(x)$ and $G_n(x)=n^{-1}\sum_{i=1}^n 1_{(y_i,\infty)}(x)$, for every $x \in \mathbb{R}$, where for any $A \subseteq \mathbb{R}$, the indicator function $1_A:\mathbb{R} \to \{0,1\}$ is given by $1_A(x)=1$ if $x \in A$ and equal to $0$ otherwise. Note that if $\hat{X}$ and $\hat{Y}$ respectively have the empirical cdfs $F_n$ and $G_n$, then $\hat{X} \leq \hat{Y}$ if and only if $x_{i:n} \leq y_{i:n}$ for every $i \in \left\{1,2,\ldots,n\right\}$, where $x_{i:n}$ and $y_{i:n}$ respectively denote the $i$th largest element from the samples $\left\{x_1,x_2,\ldots,x_n\right\}$ and $\left\{y_1,y_2,\ldots,y_n\right\}$. In other words, $\hat{X} \leq_{\textnormal{st}} \hat{Y}$ if and only if $\#\left\{i \in \left\{1,2,\ldots,n\right\} : x_{i:n} \leq y_{i:n}\right\}/n=1$. The ratio $\#\left\{i \in \left\{1,2,\ldots,n\right\} : x_{i:n} \leq y_{i:n}\right\}/n$ is Galton's rank order statistic (see \citet{GH_1968}, \citet{H_1955}), which goes all the way back to a correspondence between Galton and Darwin in 1876, in the context of analyzing data from the latter's experiments on cross and self fertilization of plants (see \citet{D_1878}). It is well-known that usual stochastic order is a partial order.\vs

It is often observed that two cdfs $F_X$ and $F_Y$ cross each other, indicating that usual stochastic order cannot exist between $X$ and $Y$. However, $F_X(x)$ and $F_Y(x)$ may be extremely close in the region where $F_X$ dominates $F_Y$, but differ significantly where $F_Y$ dominates $F_X$ (see \cref{2-fig-t4N01-largest-order-statistics-distribution-functions-ggb} for a situation where such a phenomenon occurs). To address this situation, one possible approach is to consider certain measures based on the region where the defining condition, or a characterizing condition is violated, and the extent of that violation. Following this approach we had defined a number of asymptotic stochastic orders in \citet{GN_2021_A}. An alternative approach, which is the focus of the present paper, is to consider measures of departure from the usual stochastic order. \citet{LL_2002} introduced the notion of almost stochastic dominance in the context of risk aversion and considered the following measure, which can be considered as a measure of departure from the usual stochastic order $X \leq_{\textnormal{st}} Y$.
\begin{equation*}
	\Vert F_Y-F_X \Vert_1^{-1}\int_{B_0} (F_Y(x)-F_X(x)) dx,
\end{equation*}
where $B_0=\{x \in \mathbb{R} : F_X(x)<F_Y(x)\}$ and $\Vert \cdot \Vert_1$ denotes the $L_1$ norm with respect to the Lebegue measure on $\mathbb{R}$. Despite its simplicity, asymptotic properties of this measure are not available in the literature. Recently, \citet{B_2018} proposed and derived the asymptotic properties of a measure of departure from the usual stochastic order $X \leq_{\textnormal{st}} Y$, based on the $L_2$-Wasserstein distance between two cdfs. The measure is motivated by the following characterization of usual stochastic order in terms of quantile functions.\vs

\begin{proposition}[\citet{SS_2007}, p.$5$]
	\label{2-proposition-usual-stochastic-order-quantile}
	Let $X$ and $Y$ be two random variables with respective quantile functions $F_X^{-1}$ and $F_Y^{-1}$. Then, $X \leq_{\textnormal{st}} Y$ if and only if $F_X^{-1}(u) \leq F_Y^{-1}(u)$, for every $u \in (0,1)$. \hfill $\blacksquare$
\end{proposition}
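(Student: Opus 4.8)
The plan is to argue directly from the definition of the quantile function as the generalized inverse, $F_X^{-1}(u)=\inf\{x\in\mathbb{R}:F_X(x)\geq u\}$, and to exploit the standard Galois-type relationship between a cdf and its generalized inverse. First I would record the elementary equivalence
\begin{equation*}
	F_X^{-1}(u)\leq x \iff u\leq F_X(x),\qquad u\in(0,1),\ x\in\mathbb{R},
\end{equation*}
together with its analogue for $Y$. The ``$\Leftarrow$'' direction is immediate, since $u\leq F_X(x)$ places $x$ in the set over which the infimum is taken. The ``$\Rightarrow$'' direction is where the only real subtlety lies: one uses right-continuity of $F_X$ (and that $\lim_{x\to\infty}F_X(x)=1>u$, so the set is nonempty) to conclude that $\{x:F_X(x)\geq u\}$ is a closed half-line, so that the infimum $F_X^{-1}(u)$ is attained and $F_X(F_X^{-1}(u))\geq u$; monotonicity of $F_X$ then gives $F_X(x)\geq F_X(F_X^{-1}(u))\geq u$ whenever $x\geq F_X^{-1}(u)$.

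For necessity, suppose $X\leq_{\textnormal{st}}Y$, i.e.\ $F_X(x)\geq F_Y(x)$ for all $x$, and fix $u\in(0,1)$. Then every $x$ with $F_Y(x)\geq u$ also satisfies $F_X(x)\geq u$, so $\{x:F_Y(x)\geq u\}\subseteq\{x:F_X(x)\geq u\}$; since the infimum over the larger set is the smaller one, this gives $F_X^{-1}(u)\leq F_Y^{-1}(u)$.

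For sufficiency, suppose $F_X^{-1}(u)\leq F_Y^{-1}(u)$ for every $u\in(0,1)$ and, aiming for a contradiction, that $F_X(x_0)<F_Y(x_0)$ for some $x_0\in\mathbb{R}$. Since $0\leq F_X(x_0)<F_Y(x_0)\leq 1$, I can pick $u\in(0,1)$ with $F_X(x_0)<u\leq F_Y(x_0)$. The equivalence above then yields $F_Y^{-1}(u)\leq x_0$ from $u\leq F_Y(x_0)$, and $F_X^{-1}(u)>x_0$ from $u>F_X(x_0)$; hence $F_X^{-1}(u)>x_0\geq F_Y^{-1}(u)$, contradicting the hypothesis. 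Therefore $F_X(x)\geq F_Y(x)$ for every $x$, that is, $X\leq_{\textnormal{st}}Y$.

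The main obstacle, such as it is, is purely the reverse direction of the Galois equivalence, where right-continuity of the cdf must genuinely be invoked; everything else is bookkeeping with infima and monotonicity. An alternative route would be to invoke the fact that $F_X^{-1}(U)$ has cdf $F_X$ when $U$ is uniform on $(0,1)$, and to deduce sufficiency from the pointwise inequality $F_X^{-1}\leq F_Y^{-1}$ via the coupling $X=F_X^{-1}(U)\leq F_Y^{-1}(U)=Y$ almost surely; I would prefer the self-contained argument above but could remark on this.
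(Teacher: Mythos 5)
Your proof is correct. The paper does not actually prove this proposition --- it is imported verbatim from \citet{SS_2007} with only a citation --- so there is no in-paper argument to compare against; your self-contained derivation via the Galois equivalence $F_X^{-1}(u)\leq x \iff u\leq F_X(x)$ is the standard one, and you correctly isolate the only delicate point (right-continuity of the cdf, ensuring the infimum defining $F_X^{-1}(u)$ is attained so that $F_X(F_X^{-1}(u))\geq u$). Both directions are handled cleanly: the set-inclusion argument for necessity and the contradiction via a suitably chosen $u\in(F_X(x_0),F_Y(x_0)]$ for sufficiency are exactly right, and the alternative coupling route you mention ($X=F_X^{-1}(U)\leq F_Y^{-1}(U)=Y$ a.s.) is the other standard proof of the same fact.
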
\vs

Here $F_X^{-1}$ and $F_Y^{-1}$ denote the respective left continuous inverses of $F_X$ and $F_Y$, i.e. $F_X^{-1}(u)=\inf{\left\{x \in \mathbb{R} : F_X(x) \geq u\right\}}$ and $F_Y^{-1}(u)=\inf{\left\{x \in \mathbb{R} : F_Y(x) \geq u\right\}}$. For more details on characterizations and closure properties of the usual stochastic order, we refer to \citet{SS_2007}. It is noted in \citet{L_1955} that the characterization of usual stochastic order in terms of the quantile functions, as stated in \cref{2-proposition-usual-stochastic-order-quantile}, is more intuitive than its definition. The measure of departure from $X \leq_{\textnormal{st}} Y$ proposed in \citet{B_2018} is given by
\begin{equation}
	\label{2-eq-del-barrio-measure}
	\varepsilon_{\mathcal{W}_2}(F_Y,F_X):=\frac{\int_{A_0} (F_X^{-1}(u)-F_Y^{-1}(u))^2 du}{\mathcal{W}_2^2(F_X,F_Y)},
\end{equation}
where $A_0=\{u \in (0,1) : F_X^{-1}(u)>F_Y^{-1}(u)\}$ and $\mathcal{W}_2(F_X,F_Y)$ is the $L_2$-Wesserstein distance between $F_X$ and $F_Y$, which has the following representation in terms of the corresponding quantile functions $F_X^{-1}$ and $F_Y^{-1}$.
\begin{equation}
	\label{2-eq-wasserstein-quantile}
	\mathcal{W}_2(F_X,F_Y)=\left(\int_0^1 (F_X^{-1}(u)-F_Y^{-1}(u))^2 du\right)^{1/2}.
\end{equation}
It is well-known that if $F_X$ and $F_Y$ both have finite second order moment, then $\mathcal{W}_2(F_X,F_Y)$ is finite. Note that the measure of departure from $X \leq_{\textnormal{st}} Y$ varies between $0$ and $1$, achieving the extremes in the cases $X \leq_{\textnormal{st}} Y$ and $Y \leq_{\textnormal{st}} X$, respectively. When the usual stochastic order does not hold, the measure gives a quantification of stochastic dominance of $X$ over $Y$. In this paper, we introduce the notion of asymptotic stochastic order for stochastic processes, based on this measure of departure.\vs

To motivate the idea, let us consider the following situation. Let $F_X$ and $F_Y$ be two cdfs with finite second order moments. Suppose that the right tail of $F_X$ is heavier than that of $F_Y$ in the sense that there exists $c \in \mathbb{R}$ such that $F_X(x)>F_Y(x)$, whenever $x>c$. Now, if one takes two samples (of same size) of random observations, the first one from $F_X$ and the second from $F_Y$, then one may expect that the largest observation from the first sample is smaller than that from the second sample. To be more precise, let $X_1,X_2,\ldots,X_n$ be a random sample from the standard normal distribution and $Y_1,Y_2,\ldots,Y_n$ be the same from $t$ distribution with $4$ degrees of freedom. Clearly, none of the two distributions dominate the other in usual stochastic order (see \cref{2-fig-t4N01-distribution-functions-ggb}). However, our intuition suggests that the largest order statistics $X_{n:n}$ and $Y_{n:n}$, from the respective samples should reflect the relative tail behaviour of the parent distributions. In particular, $Y_{n:n}$ should dominate $X_{n:n}$ in some stochastic sense. The respective cdfs of $X_{n:n}$ and $Y_{n:n}$ are given by $F_{X_{n:n}}(x)=(F_X(x))^n$ and $F_{Y_{n:n}}(x)=(F_Y(x))^n$ for every $x \in \mathbb{R}$. It follows that $X_{n:n} \leq_{\textnormal{st}} Y_{n:n}$ if and only if $X_1 \leq_{\textnormal{st}} Y_1$. Hence, the usual stochastic order between $X_{n:n}$ and $Y_{n:n}$ does not hold in the case explained. However, observation of the cdfs of $X_{n:n}$ and $Y_{n:n}$ suggests that stochastic order may exist in an approximate sense even when the sample size $n$ is as small as $10$ (see \cref{2-fig-t4N01-largest-order-statistics-distribution-functions-ggb}). 
\begin{figure}
	\centering
	\begin{subfigure}{0.45\textwidth}
		\centering
		\includegraphics[width=\linewidth]{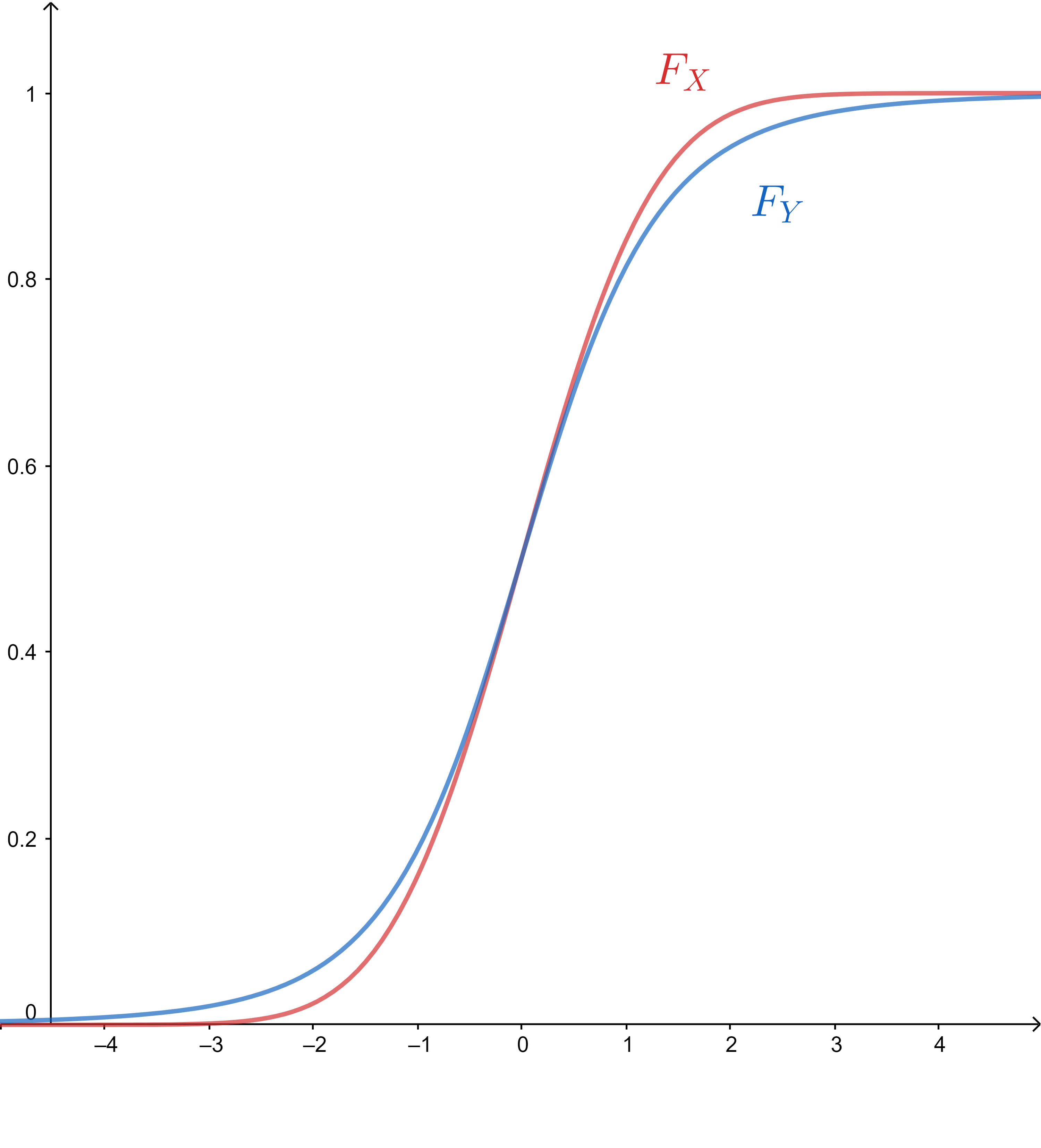}
		\caption{\centering cdfs of $N(0,1)$ denoted by $F_X$ and $t_4$ denoted by $F_Y$}
		\label{2-fig-t4N01-distribution-functions-ggb}
	\end{subfigure}
	\hfill
	\begin{subfigure}{0.45\textwidth}
		\centering
		\includegraphics[width=\linewidth]{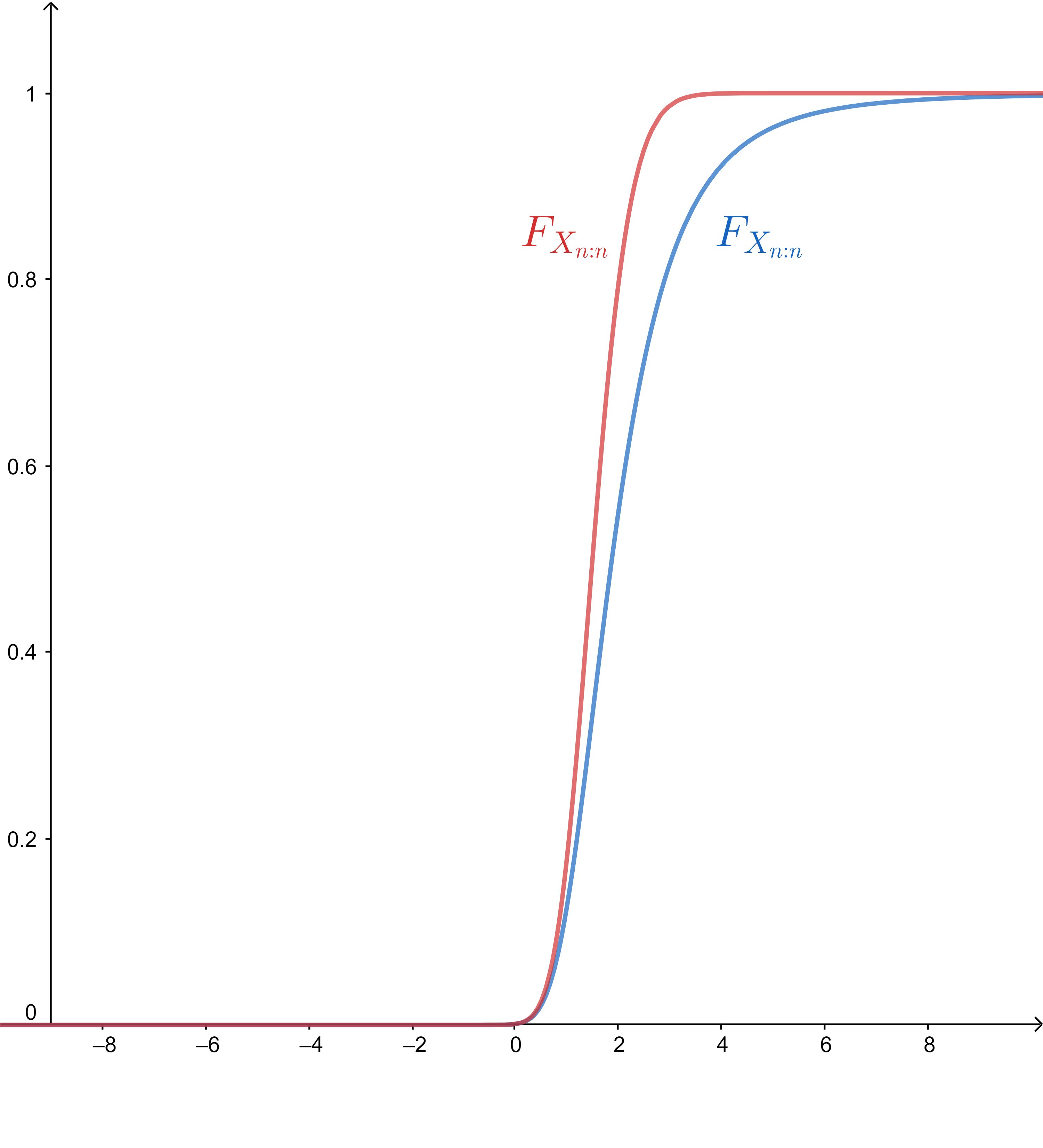}
		\caption{\centering cdfs of largest order statistics from $N(0,1)$ and $t_4$ with $n=10$}
		\label{2-fig-t4N01-largest-order-statistics-distribution-functions-ggb}
	\end{subfigure}
	\caption{}
	\label{2-fig-distributionplot}
\end{figure}
This motivate us to examine stochastic order that may exist between $X_{n:n}$ and $Y_{n:n}$ in some asymptotic sense. In this work, we find out that this is indeed the case, as indicated by the following proposition, which follows from a more general result, \cref{2-theorem-n-gamma-rate}, in \cref{2-section-order-statistics} (see \cref{2-remark}).\vs

\begin{proposition}\label{2-proposition-n-gamma-rate-special-case-largest-order}
	Let $X_1,X_2,\ldots,X_n$ and $Y_1,Y_2,\ldots,Y_n$ be two random samples respectively from distributions with continuous, strictly increasing cdfs $F_X$ and $F_Y$, both of which have finite second order moments. Suppose there exists $c \in \mathbb{R}$ such that $F_X(x)>F_Y(x)$, for every $x>c$.
	Then we have
	\begin{equation}\label{2-eq-n-gamma-rate-special-case-largest-order}
		\varepsilon_{\mathcal{W}_2}(F_{X_{n:n}},F_{Y_{n:n}}) \leq K_{\epsilon} \left(\frac{F_Y(c)}{1-\epsilon}\right)^{n-1},
	\end{equation}
	for arbitrarily small $\epsilon>0$, where $K_{\epsilon}$ is a nonnegative constant, independent of $n$. \hfill $\blacksquare$
\end{proposition}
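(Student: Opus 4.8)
The plan is to control the numerator and denominator of $\varepsilon_{\mathcal{W}_2}(F_{X_{n:n}},F_{Y_{n:n}})$ separately, bounding the numerator from above by a geometrically decaying term and the denominator from below by a positive constant independent of $n$. First I would identify the ``bad set'' $A_0 = \{u \in (0,1) : F_{X_{n:n}}^{-1}(u) > F_{Y_{n:n}}^{-1}(u)\}$. Since $F_{X_{n:n}}(x) = (F_X(x))^n$ and $F_{Y_{n:n}}(x) = (F_Y(x))^n$, the quantile functions are $F_{X_{n:n}}^{-1}(u) = F_X^{-1}(u^{1/n})$ and $F_{Y_{n:n}}^{-1}(u) = F_Y^{-1}(u^{1/n})$; hence $u \in A_0$ iff $F_X^{-1}(u^{1/n}) > F_Y^{-1}(u^{1/n})$, i.e. iff $v := u^{1/n}$ lies in the fixed set $B := \{v \in (0,1) : F_X^{-1}(v) > F_Y^{-1}(v)\}$. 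The hypothesis that $F_X(x) > F_Y(x)$ for $x > c$ translates, via \cref{2-proposition-usual-stochastic-order-quantile}-type reasoning, into the statement that $F_X^{-1}(v) \le F_Y^{-1}(v)$ for all $v$ with $F_Y^{-1}(v) > c$; equivalently $B \subseteq (0, F_Y(c)]$ (using continuity and strict monotonicity of the cdfs so that the inverses are genuine inverses). Therefore $A_0 \subseteq (0, (F_Y(c))^n]$, which already gives us a set of Lebesgue measure $(F_Y(c))^n$ shrinking geometrically.

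Next I would bound the integrand on $A_0$. Substituting $v = u^{1/n}$, so $u = v^n$ and $du = n v^{n-1} dv$, the numerator becomes
\begin{equation*}
	\int_{A_0} \left(F_{X_{n:n}}^{-1}(u) - F_{Y_{n:n}}^{-1}(u)\right)^2 du = n \int_{B \cap (0,1)} \left(F_X^{-1}(v) - F_Y^{-1}(v)\right)^2 v^{n-1}\, dv \leq n \int_0^{F_Y(c)} \left(F_X^{-1}(v) - F_Y^{-1}(v)\right)^2 v^{n-1}\, dv.
\end{equation*}
For $v \le F_Y(c)$ we have $v^{n-1} \le (F_Y(c))^{n-1}$ when we keep one factor and split $v^{n-1} = v^{n-2} \cdot v$ if needed; more cleanly, fix small $\epsilon > 0$ and write $v^{n-1} \le (F_Y(c))^{n-1} (v/F_Y(c))^{n-1}$, but the cleanest route is: on $(0, F_Y(c)]$ bound $v^{n-1} \le (F_Y(c))^{n-2}\, v$ when $n \ge 2$, giving a factor $(F_Y(c))^{n-2}$ times $\int_0^{F_Y(c)} (F_X^{-1}(v)-F_Y^{-1}(v))^2 v\, dv$. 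The remaining integral is finite because $F_X, F_Y$ have finite second moments (so $F_X^{-1}, F_Y^{-1} \in L^2(0,1)$ and hence their difference is in $L^2$, and multiplication by the bounded weight $v$ keeps it finite). This is where the $\epsilon$ in the statement enters: to absorb the stray factor of $n$ into the geometric term, I use $n (F_Y(c))^{n-2} = n (F_Y(c))^{-1}\big((1-\epsilon)^{-1}F_Y(c)\big)^{n-1}(1-\epsilon)^{n-1}$ and note $n(1-\epsilon)^{n-1} \le C_\epsilon$ for a constant $C_\epsilon$ depending only on $\epsilon$; the base $(1-\epsilon)^{-1}F_Y(c)$ is what appears in \eqref{2-eq-n-gamma-rate-special-case-largest-order}.

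For the denominator, $\mathcal{W}_2^2(F_{X_{n:n}}, F_{Y_{n:n}}) = \int_0^1 (F_X^{-1}(v^{1/n}) - F_Y^{-1}(v^{1/n}))^2 du$; after the same substitution this equals $n \int_0^1 (F_X^{-1}(v) - F_Y^{-1}(v))^2 v^{n-1} dv$, which tends to $0$, so I cannot simply lower-bound it by a constant. Instead I would lower-bound it by restricting the integral to a fixed subinterval $[a,b] \subset (0,1)$ on which $F_X^{-1}$ and $F_Y^{-1}$ differ (such an interval exists since the two distributions are not equal — indeed $F_X \ne F_Y$ because $F_X > F_Y$ on $(c,\infty)$), obtaining $\mathcal{W}_2^2 \ge n\, a^{n-1} \delta$ where $\delta = \int_a^b (F_X^{-1}-F_Y^{-1})^2 dv > 0$. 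Dividing, the factors of $n$ cancel and we get
\begin{equation*}
	\varepsilon_{\mathcal{W}_2}(F_{X_{n:n}},F_{Y_{n:n}}) \le \frac{(F_Y(c))^{n-2}\int_0^{F_Y(c)}(F_X^{-1}-F_Y^{-1})^2 v\, dv}{a^{n-1}\,\delta},
\end{equation*}
and since one may choose $a$ with $F_Y(c) < a < 1$ only if $F_Y(c) < 1$ — which fails in general — I would instead choose $[a,b]$ inside $B$ itself is not possible either since $B \subseteq (0,F_Y(c)]$. The resolution, and I expect this to be the main obstacle, is to be more careful: pick the subinterval where $F_Y^{-1} > F_X^{-1}$ (the complementary ``good'' region, which is nonempty and has positive measure, since $F_X \ne F_Y$ forces the quantile functions to differ on a set of positive measure and $F_X \le_{st} Y$ fails), say $[a,b]$ with $a$ as large as we like below the right endpoint of that region; then $v^{n-1} \ge a^{n-1}$ there but with $a$ close to $1$ the ratio $(F_Y(c)/a)^{n-1} \le (F_Y(c)/(1-\epsilon))^{n-1}$ for $a = 1-\epsilon$, provided the good region reaches up near $1$ — which it does under the tail hypothesis, since $F_X > F_Y$ near $+\infty$ means $F_X^{-1} < F_Y^{-1}$ near $u = 1$. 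Collecting the constant $C_\epsilon / \delta$ (times the finite second-moment integral) into $K_\epsilon$, and noting $K_\epsilon$ depends on $\epsilon$ through $\delta = \delta(\epsilon)$ and the factor $n(1-\epsilon)^{n-1}$ but not on $n$, yields \eqref{2-eq-n-gamma-rate-special-case-largest-order}. The bookkeeping of which base appears — showing one can always arrange $F_Y(c)/(1-\epsilon) < 1$ by taking $\epsilon$ small, using $F_Y(c) < 1$ which holds because $F_X(x) > F_Y(x)$ for large $x$ forces $F_Y(c) < 1$ (otherwise $F_Y \equiv 1$ past $c$ and $F_X$ cannot exceed it) — is the delicate part; everything else is the substitution $v = u^{1/n}$ plus finite-second-moment integrability.
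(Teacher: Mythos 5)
Your argument is correct and is essentially the paper's proof specialized to the largest order statistic: the substitution $v=u^{1/n}$ is exactly the paper's change of variables $t=\phi_{n,\gamma_n}(u)$ with $\phi_{n,\gamma_n}(u)=u^n$, the containment of the bad set in $(0,F_Y(c)]$ is the paper's \cref{2-lemma-A_0-B_0} plus \cref{2-lemma-A_0-A_0-n}, and the lower bound on the denominator via an interval $[1-\epsilon,b]$ inside the good region near $u=1$ matches the paper's use of $\int_{F_X(a_\gamma)-\epsilon}^{\gamma-\epsilon/2}$ with $F_X(a_\gamma)=1$. The worries you raise midway resolve exactly as you note at the end ($F_Y(c)<1$ is automatic and $(F_Y(c),1)$ lies in the good region), and the factors of $n$ cancel directly in the quotient, so no absorption of $n(1-\epsilon)^{n-1}$ is needed.
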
\vs

Note that, since $F_Y(c)<1$, we can choose $\epsilon>0$ such that $F_Y(c)/(1-\epsilon)<1$ and hence the right hand side of \eqref{2-eq-n-gamma-rate-special-case-largest-order} goes to $0$, as $n \to \infty$. Consequently, the measure of departure from $X_{n:n} \leq_{\textnormal{st}} Y_{n:n}$ shrinks to $0$, as $n \to \infty$. In the example with standard normal distribution and $t_4$ distribution, stated above, we have $c=0$ and consequently $F_Y(c)=1/2$. It is interesting to observe that the relative behavior of $F_X$ and $F_Y$ in the region $(-\infty,c)$ has no effect on the result. Building upon this particular observation, we consider the problem of comparing not only other order statistics, but also certain stochastic processes with common unbounded above index set $T \subseteq \mathbb{R}$ as well.\vs

Any function which is nondecreasing with $\phi(0)=0$ and $\phi(1)=1$ is called a distortion function or probability transformation function (see \citet{D_1994}). The general idea is that given a class of distortion functions $\left\{\phi_t : t \in T\right\}$, satisfying some asymptotic properties, if the baseline cdfs $F_X$ and $F_Y$ agree on the desired stochastic order around a finite set of points (dictated by the asymptotic behaviour of the map $t \mapsto \phi_t$), then the stochastic processes $\left\{\phi_t(F_X) : t \in T\right\}$ and $\left\{\phi_t(F_Y) : t \in T\right\}$ admit asymptotic stochastic order (see \cref{2-definition-asymptotic-usual-stochastic-order}). The way the baseline cdfs behave away from the specified finite number of points has no effect on the asymptotic stochastic order (see \cref{2-section-distorted-distributions}).\vs

Throughout the paper, we stick to the following notations and conventions. For any interval $I$, we denote its length by $\lvert I \rvert$ and, for any $A \subseteq \mathbb{R}$, we denote its Lebesgue measure by $l(A)$. For a function $f:B \to \mathbb{R}$, we define the $f$-image of a set $A \subseteq B$ as $f(A)=\{f(x) : x \in A\}$. Throughout the paper, the inverse of a cdf $F$ is defined as the left continuous inverse of $F$, which becomes the unique inverse if $F$ is strictly increasing. For any function $f:[0,1] \to \mathbb{R}$ which is continuously differentiable in $(0,1)$, we define the derivative of $f$ at the endpoints, i.e. $0$ and $1$, respectively by $f'(0)=\lim_{u \to 0+} f'(u)$ and $f'(1)=\lim_{u \to 1-} f'(u)$. For an arbitrary set $T \subseteq \mathbb{R}$ which is unbounded above, the limit of a real-valued function $t \mapsto \psi(t)$ defined on $T$, as $t \to \infty$, is considered in the usual sense, i.e. $\lim_{t \to \infty} \psi(t) \equiv \lim_{t \to \infty;\,t \,\in\, T} \psi(t)=a \in \mathbb{R} \cup \{-\infty,\infty\}$ if, for every sequence $\left\{t_n \in T : n \in \mathbb{N}\right\}$ with $t_n \to \infty$, as $n \to \infty$, we have $\lim_{n \to \infty} \psi(t_n)=a$. We follow the notations listed below throughout. Let $Z$ be a random variable. We denote the cdf of $Z$ by $F_Z$ and the quantile function of $Z$ by $F_Z^{-1}$, which is the left-continuous inverse of $F_Z$. Let $X$ and $Y$ be two random variables. Then we denote
\begin{align*}
	&A_0=\{u \in (0,1) : F_X^{-1}(u)>F_Y^{-1}(u)\},\\
	&A_1=\{u \in (0,1) : F_X^{-1}(u)<F_Y^{-1}(u)\},\\
	&A_2=\{u \in (0,1) : F_X^{-1}(u) \neq F_Y^{-1}(u)\},\\
	&B_0=\{x \in \mathbb{R} : F_X(x)<F_Y(x)\},\\
	&B_1=\{x \in \mathbb{R} : F_X(x)>F_Y(x)\},\\
	&B_2=\{x \in \mathbb{R} : F_X(x) \neq F_Y(x)\}.
\end{align*}

The rest of the paper is structured as follows. In \cref{2-section-asymptotic}, we define the departure-based asymptotic stochastic order and discuss its properties. In \cref{2-section-order-statistics}, we derive sufficient conditions for departure-based asymptotic stochastic order between $1+\left[(n-1)\gamma_n\right]$th order statistics from two different homogeneous samples, where one may choose the sequence $\{\gamma_n\}$ which converges to $\gamma \in [0,1]$ appropriately to take any extreme or central order statistic into account. In \cref{2-section-distorted-distributions}, we have extended the idea to stochastically compare certain stochastic processes in an asymptotic sense. In \cref{2-section-applications}, we apply the concept to make asymptotic stochastic comparison of mixtures of order statistics as well as record values from two different homogeneous samples.\vs

\section{Departure-based asymptotic stochastic order}\label{2-section-asymptotic}

Let $\left\{X_t : t \in T\right\}$ and $\left\{Y_t : t \in T\right\}$ be two stochastic processes, where $T$ ($\subseteq \mathbb{R}$) is unbounded above and the state space is $\mathbb{R}$. Note that when $T=\mathbb{N}$ (resp. $T=[0,\infty)$), the processes become discrete-time (resp. continuous-time) stochastic processes. Now it may happen that there does not exist any $t \in T$ such that $X_t \leq_{\textnormal{st}} Y_t$ but, as $t \to \infty$, it increasingly gets closer to $X_t \leq_{\textnormal{st}} Y_t$, in some sense. The measure in \eqref{2-eq-del-barrio-measure} gives us a way to mathematically address such a situation. Observe that if $\mathcal{W}_2(F_X,F_Y)=0$, then $\varepsilon_{\mathcal{W}_2}(F_X,F_Y)$ has $0/0$ form, and hence is undefined. To reflect the fact that $X \leq_{\textnormal{st}} X$ for any random variable $X$, we use the convention that $\varepsilon_{\mathcal{W}_2}(F_X,F_Y)=0$ if $\mathcal{W}_2(F_X,F_Y)=0$. Again, observe that if $\int_{A_0}(F_X^{-1}(u)-F_Y^{-1}(u))^2\,du=\infty$, then $\varepsilon_{\mathcal{W}_2}(F_X,F_Y)$ has $\infty/\infty$ form, and hence is undefined. In this case, we use the convention that $\varepsilon_{\mathcal{W}_2}(F_X,F_Y)=1$.

\begin{definition}
	\label{2-definition-asymptotic-usual-stochastic-order}
	Let $\left\{X_t : t \in T\right\}$ and $\left\{Y_t : t \in T\right\}$ be two stochastic processes, with respective classes of cdfs $\left\{F_{X_t} : t \in T\right\}$ and $\left\{F_{Y_t} : t \in T\right\}$. We say that $X_t$ is smaller than $Y_t$ in departure-based asymptotic stochastic order, denoted by $X_t \leq_{\textnormal{d-ast}} Y_t$, as $t \to \infty$, if
	\begin{equation}\label{2-eq-definition-asymptotic-usual-stochastic-order}
		\lim_{t \to \infty} \varepsilon_{\mathcal{W}_2}(F_{X_t},F_{Y_t})=0.
	\end{equation}
	Also, we say that $X_t$ is asymptotically equal to $Y_t$ in usual stochastic order, denoted by $X_t =_{\text{ast}} Y_t$, as $t \to \infty$, if both $X_t \leq_{\textnormal{d-ast}} Y_t$ and $Y_t \leq_{\textnormal{d-ast}} X_t$ hold true as $t \to \infty$. \hfill $\blacksquare$
\end{definition}\vs

Now we discuss some properties of the departure-based asymptotic stochastic order.
\begin{proposition}\label{2-proposition-equality-asymptotic-usual-stochastic-order}
	If $X_t =_{\textnormal{ast}} Y_t$, as $t \to \infty$, then $\lim_{t \to \infty} \mathcal{W}_2(F_{X_t},F_{Y_t})=0$.
\end{proposition}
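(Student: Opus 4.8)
The plan is to exploit the complementary nature of the two departure indices $\varepsilon_{\mathcal{W}_2}(F_{X_t},F_{Y_t})$ and $\varepsilon_{\mathcal{W}_2}(F_{Y_t},F_{X_t})$, which between them govern the relation $=_{\textnormal{ast}}$. By \cref{2-definition-asymptotic-usual-stochastic-order}, the hypothesis $X_t =_{\textnormal{ast}} Y_t$ says precisely that $\varepsilon_{\mathcal{W}_2}(F_{X_t},F_{Y_t}) \to 0$ and $\varepsilon_{\mathcal{W}_2}(F_{Y_t},F_{X_t}) \to 0$ as $t \to \infty$, so it suffices to show that these two facts force $\mathcal{W}_2(F_{X_t},F_{Y_t}) \to 0$.

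The first and main step is an algebraic identity. Fix $t$ and set $A_0^{(t)} = \{u \in (0,1) : F_{X_t}^{-1}(u) > F_{Y_t}^{-1}(u)\}$ and $A_1^{(t)} = \{u \in (0,1) : F_{X_t}^{-1}(u) < F_{Y_t}^{-1}(u)\}$; on the remaining set, where $F_{X_t}^{-1} = F_{Y_t}^{-1}$, the integrand $(F_{X_t}^{-1} - F_{Y_t}^{-1})^2$ vanishes. The quantile representation \eqref{2-eq-wasserstein-quantile} of the Wasserstein distance then yields
\begin{equation*}
	\int_{A_1^{(t)}} \bigl(F_{X_t}^{-1}(u) - F_{Y_t}^{-1}(u)\bigr)^2\,du + \int_{A_0^{(t)}} \bigl(F_{X_t}^{-1}(u) - F_{Y_t}^{-1}(u)\bigr)^2\,du = \mathcal{W}_2^2(F_{X_t},F_{Y_t}),
\end{equation*}
and the two integrals on the left are exactly the numerators of $\varepsilon_{\mathcal{W}_2}(F_{X_t},F_{Y_t})$ and $\varepsilon_{\mathcal{W}_2}(F_{Y_t},F_{X_t})$ respectively. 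Consequently, whenever $0 < \mathcal{W}_2(F_{X_t},F_{Y_t}) < \infty$, dividing through by $\mathcal{W}_2^2(F_{X_t},F_{Y_t})$ gives
\begin{equation*}
	\varepsilon_{\mathcal{W}_2}(F_{X_t},F_{Y_t}) + \varepsilon_{\mathcal{W}_2}(F_{Y_t},F_{X_t}) = 1.
\end{equation*}
If instead $\mathcal{W}_2(F_{X_t},F_{Y_t}) = \infty$, then by the same identity at least one of the two numerators is infinite, and the $\infty/\infty$ convention adopted just before \cref{2-definition-asymptotic-usual-stochastic-order} makes the corresponding index equal to $1$. Thus for every $t$ one has the dichotomy: either $\mathcal{W}_2(F_{X_t},F_{Y_t}) = 0$, or $\varepsilon_{\mathcal{W}_2}(F_{X_t},F_{Y_t}) + \varepsilon_{\mathcal{W}_2}(F_{Y_t},F_{X_t}) \geq 1$.

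To finish, pick $M$ large enough that $\varepsilon_{\mathcal{W}_2}(F_{X_t},F_{Y_t}) < 1/2$ and $\varepsilon_{\mathcal{W}_2}(F_{Y_t},F_{X_t}) < 1/2$ for all $t \in T$ with $t > M$, which is possible since both indices tend to $0$ as $t \to \infty$. For such $t$ the second branch of the dichotomy is impossible, hence $\mathcal{W}_2(F_{X_t},F_{Y_t}) = 0$; in particular $\mathcal{W}_2(F_{X_t},F_{Y_t}) \to 0$ as $t \to \infty$ (indeed it is eventually $0$), which is the assertion. The one point that needs genuine care — and is the main obstacle, modest as it is — is the bookkeeping with the conventions: one must verify that the additivity of the two numerators persists when some of them are infinite, so that an index can fail to equal its literal ratio only via the $\infty/\infty$ clause, and that the diagonal set $\{F_{X_t}^{-1} = F_{Y_t}^{-1}\}$ is legitimately discarded. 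Beyond that, no analysis is involved.
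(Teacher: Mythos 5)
Your proof is correct and rests on the same key fact as the paper's, namely the complementarity identity $\varepsilon_{\mathcal{W}_2}(F_{X_t},F_{Y_t})+\varepsilon_{\mathcal{W}_2}(F_{Y_t},F_{X_t})=1$ whenever $\mathcal{W}_2(F_{X_t},F_{Y_t})>0$; the paper merely packages this as a contradiction argument along a subsequence bounded away from $0$, whereas you argue directly. Your direct version is, if anything, slightly cleaner and yields the marginally stronger conclusion that $\mathcal{W}_2(F_{X_t},F_{Y_t})$ is eventually exactly $0$, not just convergent to $0$.
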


\begin{proof}
	Suppose, for the sake of contradiction, that $\mathcal{W}_2\left(F_{X_t},F_{Y_t}\right) \nrightarrow 0$ as $t \to \infty$. So, there must exist a sequence $\left\{t_n \in T : n \in \mathbb{N}\right\}$ with $t_n \to \infty$, as $n \to \infty$ such that $\mathcal{W}_2\left(F_{X_{t_n}},F_{Y_{t_n}}\right)$ does not go to $0$, as $n \to \infty$. Thus, there exists $\epsilon>0$ such that, for every $N \in \mathbb{N}$, there exists $n \geq N$ satisfying $\mathcal{W}_2\left(F_{X_{t_n}},F_{Y_{t_n}}\right)>\epsilon$. It follows that there exists a subsequence $\left\{n_m : m \in \mathbb{N}\right\}$ such that $\mathcal{W}_2\left(F_{X_{t_{n_m}}},F_{Y_{t_{n_m}}}\right)>\epsilon$ for every $m \in \mathbb{N}$, i.e. $\left\{\mathcal{W}_2\left(F_{X_{t_{n_m}}},F_{Y_{t_{n_m}}}\right) : m \in \mathbb{N}\right\}$ is bounded away from $0$. Then, we have $\varepsilon_{\mathcal{W}_2}\left(F_{X_{t_{n_m}}},F_{Y_{t_{n_m}}}\right)+\varepsilon_{\mathcal{W}_2}\left(F_{Y_{t_{n_m}}},F_{X_{t_{n_m}}}\right)=1$, for every $m \in \mathbb{N}$, and hence
	\begin{equation*}
		\label{2-eq-equality-asymptotic-usual-stochastic-order}
		\lim_{m \to \infty} \varepsilon_{\mathcal{W}_2}\left(F_{X_{t_{n_m}}},F_{Y_{t_{n_m}}}\right)+\lim_{m \to \infty} \varepsilon_{\mathcal{W}_2}\left(F_{Y_{t_{n_m}}},F_{X_{t_{n_m}}}\right)=1,
	\end{equation*}
	which contradicts the hypothesis. Hence, we must have $\lim_{t \to \infty} \mathcal{W}_2\left(F_{X_t},F_{Y_t}\right)=0$.
\end{proof}\vs

Let $\left\{X_t : t \in T\right\}$ be a stochastic process. Then $X_t \leq_{\textnormal{d-ast}} X_t$, as $t \to \infty$, i.e. departure-based asymptotic stochastic order is {\it reflexive}. To see this, note that, for every $t \in T$, we have $\mathcal{W}_2(F_{X_t},F_{X_t})=0$ and hence $\lim_{t \to \infty} \varepsilon_{\mathcal{W}_2}(F_{X_t},F_{X_t})=0$, giving $X_t \leq_{\textnormal{d-ast}} X_t$, as $t \to \infty$. Again, departure-based asymptotic stochastic order is {\it antisymmetric} by definition, i.e., for any two stochastic processes $\left\{X_t : t \in T\right\}$ and $\left\{Y_t : t \in T\right\}$, $X_t \leq_{\textnormal{d-ast}} Y_t$, as $t \to \infty$ and $Y_t \leq_{\textnormal{d-ast}} X_t$, as $t \to \infty$ imply $X_t =_{\textnormal{ast}} Y_t$, as $t \to \infty$. The {\it transitivity} property, however, is not so straightforward and requires additional conditions, as shown in the next result.
\begin{theorem}\label{2-theorem-transitivity-asymptotic-usual-stochastic-order}
	Let $\left\{X_t : t \in T\right\}$, $\left\{Y_t : t \in T\right\}$ and $\left\{Z_t : t \in T\right\}$ be three stochastic processes such that $X_t \leq_{\textnormal{d-ast}} Y_t$ and $Y_t \leq_{\textnormal{d-ast}} Z_t$. Then, we have $X_t \leq_{\textnormal{d-ast}} Z_t$, provided
	\[
	\mathcal{W}_2^2\left(F_{X_t},F_{Y_t}\right)=O\left(\mathcal{W}_2^2\left(F_{X_t},F_{Z_t}\right)\right) \text{and } \mathcal{W}_2^2\left(F_{Y_t},F_{Z_t}\right)=O\left(\mathcal{W}_2^2\left(F_{X_t},F_{Z_t}\right)\right).
	\]
\end{theorem}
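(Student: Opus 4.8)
The plan is to work entirely with quantile functions and to bound the ``overshoot'' integral forming the numerator of $\varepsilon_{\mathcal{W}_2}(F_{X_t},F_{Z_t})$ by those of the two measures in the hypotheses. Write $q_X=F_{X_t}^{-1}$, $q_Y=F_{Y_t}^{-1}$, $q_Z=F_{Z_t}^{-1}$, suppressing the dependence on $t$. By \eqref{2-eq-del-barrio-measure} and \eqref{2-eq-wasserstein-quantile},
\[
\varepsilon_{\mathcal{W}_2}(F_{X_t},F_{Z_t})=\frac{\int_{\{q_Z>q_X\}}(q_Z-q_X)^2\,du}{\int_0^1(q_Z-q_X)^2\,du},
\]
and analogously for the pairs $(X_t,Y_t)$ and $(Y_t,Z_t)$; the goal is to show this tends to $0$. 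First I would dispose of the degenerate cases $\mathcal{W}_2^2(F_{X_t},F_{Z_t})\in\{0,\infty\}$: if either occurs for arbitrarily large $t$, the stated conventions, together with the $O$-hypotheses (which propagate finiteness or nullity of $\mathcal{W}_2^2(F_{X_t},F_{Z_t})$ to the other two pairs) and the fact that $\varepsilon_{\mathcal{W}_2}(F_{X_t},F_{Y_t}),\varepsilon_{\mathcal{W}_2}(F_{Y_t},F_{Z_t})<1$ eventually, force $\varepsilon_{\mathcal{W}_2}(F_{X_t},F_{Z_t})=0$ at all such large $t$. So from now on assume $0<\mathcal{W}_2^2(F_{X_t},F_{Z_t})<\infty$ for all large $t$.

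The heart of the argument is a pointwise estimate of the overshoot. On $\{q_Z>q_X\}$ write $q_Z-q_X=(q_Z-q_Y)+(q_Y-q_X)$; since $q_Z-q_X>0$ while $q_Z-q_Y\le(q_Z-q_Y)^{+}$ and $q_Y-q_X\le(q_Y-q_X)^{+}$ (with $b^{+}=\max(b,0)$), we get $0<q_Z-q_X\le(q_Z-q_Y)^{+}+(q_Y-q_X)^{+}$, and squaring via $(\alpha+\beta)^2\le 2\alpha^2+2\beta^2$ gives, on $\{q_Z>q_X\}$,
\[
(q_Z-q_X)^2\le 2(q_Z-q_Y)^2\,\mathbf{1}_{\{q_Z>q_Y\}}+2(q_Y-q_X)^2\,\mathbf{1}_{\{q_Y>q_X\}}.
\]
Integrating over $\{q_Z>q_X\}$ and then enlarging the domains of integration,
\[
\int_{\{q_Z>q_X\}}(q_Z-q_X)^2\,du\le 2\int_{\{q_Z>q_Y\}}(q_Z-q_Y)^2\,du+2\int_{\{q_Y>q_X\}}(q_Y-q_X)^2\,du,
\]
which says the numerator of $\varepsilon_{\mathcal{W}_2}(F_{X_t},F_{Z_t})$ is at most $2\,\varepsilon_{\mathcal{W}_2}(F_{Y_t},F_{Z_t})\,\mathcal{W}_2^2(F_{Y_t},F_{Z_t})+2\,\varepsilon_{\mathcal{W}_2}(F_{X_t},F_{Y_t})\,\mathcal{W}_2^2(F_{X_t},F_{Y_t})$. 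Dividing by $\mathcal{W}_2^2(F_{X_t},F_{Z_t})$,
\[
\varepsilon_{\mathcal{W}_2}(F_{X_t},F_{Z_t})\le 2\,\varepsilon_{\mathcal{W}_2}(F_{Y_t},F_{Z_t})\,\frac{\mathcal{W}_2^2(F_{Y_t},F_{Z_t})}{\mathcal{W}_2^2(F_{X_t},F_{Z_t})}+2\,\varepsilon_{\mathcal{W}_2}(F_{X_t},F_{Y_t})\,\frac{\mathcal{W}_2^2(F_{X_t},F_{Y_t})}{\mathcal{W}_2^2(F_{X_t},F_{Z_t})}.
\]
By the two $O$-hypotheses the ratios are bounded for large $t$, so the right-hand side is at most a fixed multiple of $\varepsilon_{\mathcal{W}_2}(F_{Y_t},F_{Z_t})+\varepsilon_{\mathcal{W}_2}(F_{X_t},F_{Y_t})$ for large $t$, which tends to $0$ by hypothesis. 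Hence $\varepsilon_{\mathcal{W}_2}(F_{X_t},F_{Z_t})\to0$, i.e.\ $X_t\leq_{\textnormal{d-ast}}Z_t$.

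\emph{Where the difficulty lies.} The only non-routine point is the final step, and it is exactly what the two growth conditions are there for. Vanishing of $\varepsilon_{\mathcal{W}_2}(F_{X_t},F_{Y_t})$ does \emph{not} by itself force the absolute overshoot $\int_{\{q_Y>q_X\}}(q_Y-q_X)^2\,du=\varepsilon_{\mathcal{W}_2}(F_{X_t},F_{Y_t})\,\mathcal{W}_2^2(F_{X_t},F_{Y_t})$ to vanish, because $\mathcal{W}_2^2(F_{X_t},F_{Y_t})$ may diverge; it is precisely the comparisons $\mathcal{W}_2^2(F_{X_t},F_{Y_t})=O(\mathcal{W}_2^2(F_{X_t},F_{Z_t}))$ and $\mathcal{W}_2^2(F_{Y_t},F_{Z_t})=O(\mathcal{W}_2^2(F_{X_t},F_{Z_t}))$ that let one renormalise by the denominator of the target measure while keeping a bound that still goes to $0$. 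I would also stress that the triangle inequality for $\mathcal{W}_2$ is not the relevant tool here: it controls only the denominator $\mathcal{W}_2(F_{X_t},F_{Z_t})$, whereas the obstruction to transitivity sits in the numerators (the overshoot integrals), which is exactly why one must decompose $q_Z-q_X$ pointwise as above.
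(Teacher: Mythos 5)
Your proof is correct and follows essentially the same route as the paper's: decompose $F_{X_t}^{-1}-F_{Z_t}^{-1}$ through $F_{Y_t}^{-1}$ on the violation set, bound the overshoot numerator by a constant multiple of the sum of the two hypothesis overshoots, and then normalise by $\mathcal{W}_2^2(F_{X_t},F_{Z_t})$ using the two $O$-conditions. The only differences are cosmetic: your uniform positive-part bound gives the constant $2$ where the paper's three-way decomposition of the violation set gives $3$, your treatment of the degenerate $\mathcal{W}_2=0$ cases is more compressed than the paper's four-case analysis, and your violation sets are oriented as in the literal reading of \eqref{2-eq-del-barrio-measure} rather than as in the paper's own proofs (the paper is internally inconsistent on this point), which is harmless since the argument is symmetric under exchanging the roles.
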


\begin{proof}
	Let $A_{0,t}=\{u \in (0,1) : F_{X_t}^{-1}(u)>F_{Y_t}^{-1}(u)\}$, $B_{0,t}=\{u \in (0,1) : F_{Y_t}^{-1}(u)>F_{Z_t}^{-1}(u)\}$ and $C_{0,t}=\{u \in (0,1) : F_{X_t}^{-1}(u)>F_{Z_t}^{-1}(u)\}$. Then, it is easy to see that $C_{0,t} \subseteq A_{0,t} \cup B_{0,t}$. Note that $C_{0,t}=C_{0,t} \medcap (A_{0,t} \cup B_{0,t})=(A_{0,t} \medcap B_{0,t}^c \medcap C_{0,t}) \cup (A_{0,t}^c \medcap B_{0,t} \medcap C_{0,t}) \cup (A_{0,t} \medcap B_{0,t} \medcap C_{0,t})$. Hence
	\begin{align}\label{2-eq-C-0-t-decomposition}
		\int_{C_{0,t}} (F_{Z_t}^{-1}(u)-F_{X_t}^{-1}(u))^2du &= \int_{A_{0,t} \cap B_{0,t}^c \cap C_{0,t}} (F_{Z_t}^{-1}(u)-F_{X_t}^{-1}(u))^2du\nonumber\\
		&\,\,\,\,\,+\int_{A_{0,t}^c \cap B_{0,t} \cap C_{0,t}} (F_{Z_t}^{-1}(u)-F_{X_t}^{-1}(u))^2du\nonumber\\
		&\,\,\,\,\,+\int_{A_{0,t} \cap B_{0,t} \cap C_{0,t}} (F_{Z_t}^{-1}(u)-F_{X_t}^{-1}(u))^2du.
	\end{align}
	If $u \in A_{0,t} \medcap B_{0,t}^c \medcap C_{0,t}$, then $F_{X_t}^{-1}(u) > F_{Z_t}^{-1}(u) \geq F_{Y_t}^{-1}(u)$ and hence
	\begin{equation*}
		\int\limits_{A_{0,t} \cap B_{0,t}^c \cap C_{0,t}} (F_{X_t}^{-1}(u)-F_{Z_t}^{-1}(u))^2du \leq \int\limits_{A_{0,t}} (F_{X_t}^{-1}(u)-F_{Y_t}^{-1}(u))^2du.
	\end{equation*}
	Similarly, if $u \in A_{0,t}^c \medcap B_{0,t} \medcap C_{0,t}$, then
	\begin{equation*}
		\int\limits_{A_{0,t}^c \cap B_{0,t} \cap C_{0,t}} (F_{X_t}^{-1}(u)-F_{Z_t}^{-1}(u))^2du \leq \int\limits_{B_{0,t}} (F_{Y_t}^{-1}(u)-F_{Z_t}^{-1}(u))^2du
	\end{equation*}
	and if $u \in A_{0,t} \medcap B_{0,t} \medcap C_{0,t}$, then
	\begin{equation*}
		\int_{A_{0,t} \cap B_{0,t} \cap C_{0,t}} (F_{X_t}^{-1}(u)-F_{Z_t}^{-1}(u))^2du \leq 2\left\{\int_{A_{0,t}} (F_{X_t}^{-1}(u)-F_{Y_t}^{-1}(u))^2du+\int_{B_{0,t}} (F_{Y_t}^{-1}(u)-F_{Z_t}^{-1}(u))^2du\right\}.
	\end{equation*}
	Then, using \eqref{2-eq-C-0-t-decomposition} and the above upper bounds, we have
	\begin{equation}
		\label{2-eq-int-C_n-3-int-B_n-int-A_n}
		\int_{C_{0,t}} (F_{X_t}^{-1}(u)-F_{Z_t}^{-1}(u))^2du \leq 3\left\{\int_{B_{0,t}} (F_{Y_t}^{-1}(u)-F_{Z_t}^{-1}(u))^2du+\int_{A_{0,t}} (F_{X_t}^{-1}(u)-F_{Y_t}^{-1}(u))^2du\right\}.
	\end{equation}
	Since, by the hypothesis, there exists $C_1>0$ and $C_2>0$ such that $\mathcal{W}_2^2(F_{X_t},F_{Y_t}) \leq C_1\,\mathcal{W}_2^2(F_{X_t},F_{Z_t})$ and $\mathcal{W}_2^2(F_{Y_t},F_{Z_t}) \leq C_2\,\mathcal{W}_2^2(F_{X_t},F_{Z_t})$, for every $t \geq t_0$, for some $t_0 \in T$. Let us fix $t \geq t_0$. We consider the following four cases.\vs
	
	\ni{\bf Case 1.} $\mathcal{W}_2(F_{X_t},F_{Y_t})=0$ and $\mathcal{W}_2(F_{Y_t},F_{Z_t})=0$. Then, by nonnegativity and triangle inequality, $0 \leq \mathcal{W}_2(F_{X_t},F_{Z_t}) \leq \mathcal{W}_2(F_{X_t},F_{Y_t})+\mathcal{W}_2(F_{Y_t},F_{Z_t})=0$, i.e. $\mathcal{W}_2(F_{X_t},F_{Z_t})=0$. Thus by convention, $\varepsilon_{\mathcal{W}_2}(F_{X_t},F_{Z_t})=0$.\vs
	
	\ni{\bf Case 2.} $\mathcal{W}_2(F_{X_t},F_{Y_t})=0$ and $\mathcal{W}_2(F_{Y_t},F_{Z_t})>0$. In this case,
	\[
	\mathcal{W}_2(F_{X_t},F_{Z_t}) \geq \left\vert \mathcal{W}_2(F_{X_t},F_{Y_t})-\mathcal{W}_2(F_{Y_t},F_{Z_t}) \right\vert=\mathcal{W}_2(F_{Y_t},F_{Z_t})>0.
	\]
	Using \eqref{2-eq-int-C_n-3-int-B_n-int-A_n}, we get
	\begin{align*}
	\int_{C_{0,t}} (F_{X_t}^{-1}(u)-F_{Z_t}^{-1}(u))^2du &\leq 3\int_{B_{0,t}} (F_{Y_t}^{-1}(u)-F_{Z_t}^{-1}(u))^2du\\
	&=3\varepsilon_{\mathcal{W}_2}(F_{Y_t},F_{Z_t})\mathcal{W}_2^2(F_{Y_t},F_{Z_t})\\
	&\leq 3C_2\,\varepsilon_{\mathcal{W}_2}(F_{Y_t},F_{Z_t})\mathcal{W}_2^2(F_{X_t},F_{Z_t}).
	\end{align*}
	Dividing the extreme sides by $\mathcal{W}_2^2(F_{X_t},F_{Z_t})$, we obtain $\varepsilon_{\mathcal{W}_2}(F_{X_t},F_{Z_t}) \leq 3C_2\,\varepsilon_{\mathcal{W}_2}(F_{Y_t},F_{Z_t})$.\vs
	
	\ni {\bf Case 3.} $\mathcal{W}_2(F_{X_t},F_{Y_t})>0$ and $\mathcal{W}_2(F_{Y_t},F_{Z_t})=0$. In this case,
	\[
	\mathcal{W}_2(F_{X_t},F_{Z_t}) \geq \left\vert \mathcal{W}_2(F_{X_t},F_{Y_t})-\mathcal{W}_2(F_{Y_t},F_{Z_t}) \right\vert=\mathcal{W}_2(F_{X_t},F_{Y_t})>0.
	\]
	Using \eqref{2-eq-int-C_n-3-int-B_n-int-A_n}, we get
	\begin{align*}
	\int_{C_{0,t}} (F_{X_t}^{-1}(u)-F_{Z_t}^{-1}(u))^2du &\leq 3\int_{A_{0,t}} (F_{X_t}^{-1}(u)-F_{Y_t}^{-1}(u))^2du\\
	&=3\varepsilon_{\mathcal{W}_2}(F_{X_t},F_{Y_t})\mathcal{W}_2^2(F_{X_t},F_{Y_t})\\
	&\leq 3C_1\,\varepsilon_{\mathcal{W}_2}(F_{X_t},F_{Y_t})\mathcal{W}_2^2(F_{X_t},F_{Z_t}).
	\end{align*}
	Dividing the extreme sides by $\mathcal{W}_2^2(F_{X_t},F_{Z_t})$, we obtain $\varepsilon_{\mathcal{W}_2}(F_{X_t},F_{Z_t}) \leq 3C_1\,\varepsilon_{\mathcal{W}_2}(F_{X_t},F_{Y_t})$.\vs
	
	\ni{\bf Case 4.} $\mathcal{W}_2(F_{X_t},F_{Y_t})>0$ and $\mathcal{W}_2(F_{Y_t},F_{Z_t})>0$. In this case, $\mathcal{W}_2(F_{X_t},F_{Z_t})$ may or may not be $0$. If $\mathcal{W}_2(F_{X_t},F_{Z_t})=0$, then $\varepsilon_{\mathcal{W}_2}(F_{X_t},F_{Z_t})=0$. Now, let us consider the case where $\mathcal{W}_2(F_{X_t},F_{Z_t})>0$. Here, \eqref{2-eq-int-C_n-3-int-B_n-int-A_n} gives
	\begin{align*}
		\int_{C_{0,t}} (F_{X_t}^{-1}(u)-F_{Z_t}^{-1}(u))^2du &\leq 3\left\{\varepsilon_{\mathcal{W}_2}(F_{Y_t},F_{Z_t})\mathcal{W}_2^2(F_{Y_t},F_{Z_t})+\varepsilon_{\mathcal{W}_2}(F_{X_t},F_{Y_t})\mathcal{W}_2^2(F_{X_t},F_{Y_t})\right\}\\
		&\leq 3\,\mathcal{W}_2^2(F_{X_t},F_{Z_t})\left\{C_2\,\varepsilon_{\mathcal{W}_2}(F_{Y_t},F_{Z_t})+C_1\,\varepsilon_{\mathcal{W}_2}(F_{X_t},F_{Y_t})\right\}.
	\end{align*}
	Dividing both sides by $\mathcal{W}_2^2(F_{X_t},F_{Z_t})$, we obtain
	\[
	\varepsilon_{\mathcal{W}_2}(F_{X_t},F_{Z_t}) \leq 3\,\left\{C_2\,\varepsilon_{\mathcal{W}_2}(F_{Y_t},F_{Z_t})+C_1\,\varepsilon_{\mathcal{W}_2}(F_{X_t},F_{Y_t})\right\}.
	\]
	Since $X_t \leq_{\textnormal{d-ast}} Y_t$ and $Y_t \leq_{\textnormal{d-ast}} Z_t$, we respectively have $\lim_{t \to \infty} \varepsilon_{\mathcal{W}_2}(F_{X_t},F_{Y_t})=0$ and $\lim_{t \to \infty} \varepsilon_{\mathcal{W}_2}(F_{Y_t},F_{Z_t})=0$. Thus, given $\epsilon>0$, there exists $t_1,t_2 \in T$ such that $t \geq t_1 \Rightarrow \varepsilon_{\mathcal{W}_2}(F_{X_t},F_{Y_t})<\epsilon/(6C_1)$ and $t \geq t_2 \Rightarrow \varepsilon_{\mathcal{W}_2}(F_{Y_t},F_{Z_t})<\epsilon/(6C_2)$. Combining all the cases, we see that $t \geq \max{\left\{t_0,t_1,t_2\right\}} \Rightarrow \varepsilon_{\mathcal{W}_2}(F_{X_t},F_{Z_t})<\epsilon$. Since $\epsilon>0$ is arbitrarily chosen, we have $\lim_{t \to \infty} \varepsilon_{\mathcal{W}_2}(F_{X_t},F_{Z_t}) \leq 0$. The reverse inequality follows from the fact that $\varepsilon_{\mathcal{W}_2}(F_{X_t},F_{Z_t}) \geq 0$, for every $t \in T$. Hence the proof is established.
\end{proof}\vs

\begin{remark}
The two conditions $\mathcal{W}_2^2(F_{X_t},F_{Y_t})=O(\mathcal{W}_2^2(F_{X_t},F_{Z_t}))$ and $\mathcal{W}_2^2(F_{Y_t},F_{Z_t})=O(\mathcal{W}_2^2(F_{X_t},F_{Z_t}))$ prevent the $\mathcal{W}_2$-distance between $X_t$ and $Z_t$ to diminish rapidly compared to the same between $X_t$ and $Y_t$, and also the same between $Y_t$ and $Z_t$. For instance, these conditions may get violated if $X_t$ and $Z_t$ both converge to the same random variable $X$ in $\mathcal{W}_2$ sense (i.e. $\lim_{t \to \infty} \mathcal{W}_2(F_{X_t},F_X)=\lim_{t \to \infty} \mathcal{W}_2(F_{Z_t},F_X)=0$), whereas $Y_t$ converges to a different random variable $Y$ in $\mathcal{W}_2$ sense. \hfill $\blacksquare$
\end{remark}\vs

The next result establishes that the measure defined in \eqref{2-eq-del-barrio-measure} is location invariant, but depends on the sign of the scaling parameter. The proof follows from straightforward calculations and hence omitted.
\begin{proposition}
	\label{proposition-wasserstein-2-location-scale}
	Let $X$ and $Y$ be two continuous random variables with respective distribution functions $F_X$ and $F_Y$. Assume that both $X$ and $Y$ have finite second order moment. Let us denote, for $a \in \mathbb{R}$ and $b \neq 0$, the respective distribution functions of $a+bX$ and $a+bY$ by $F_{a+bX}$ and $F_{a+bY}$. Then we have
	\[
	\varepsilon_{\mathcal{W}_2}\left(F_{a+bX},F_{a+bY}\right)=\begin{cases*}
		\varepsilon_{\mathcal{W}_2}\left(F_X,F_Y\right) & if $b>0$,\\
		\varepsilon_{\mathcal{W}_2}\left(F_Y,F_X\right) & if $b<0$.
	\end{cases*} \tag*{$\blacksquare$}
	\]
\end{proposition}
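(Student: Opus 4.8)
The plan is to reduce the statement to the elementary transformation rules for quantile functions under an affine change of the state variable. For $b>0$ one has, for every $u\in(0,1)$, $F_{a+bX}^{-1}(u)=a+bF_X^{-1}(u)$, whereas for $b<0$ one has, for all but at most countably many $u\in(0,1)$, $F_{a+bX}^{-1}(u)=a+bF_X^{-1}(1-u)$; the same identities hold with $X$ replaced by $Y$, and continuity of $X$ and $Y$ is what makes these clean. Since $X$ and $Y$ have finite second moments, so do $a+bX$ and $a+bY$; hence $\mathcal{W}_2(F_{a+bX},F_{a+bY})<\infty$, and since the relevant numerators are dominated by the corresponding (finite) squared Wasserstein distances, the $\infty/\infty$ convention never comes into play. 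It therefore suffices to treat the honest ratio, noting separately that the degenerate case $\mathcal{W}_2=0$ is preserved under multiplication of both entries by $b\neq0$ and carries the convention $\varepsilon_{\mathcal{W}_2}=0$ on both sides.

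Consider first $b>0$. Subtracting the quantile identities gives $F_{a+bX}^{-1}(u)-F_{a+bY}^{-1}(u)=b\bigl(F_X^{-1}(u)-F_Y^{-1}(u)\bigr)$, so the set on which $F_{a+bX}^{-1}>F_{a+bY}^{-1}$ is exactly $A_0=\{u\in(0,1):F_X^{-1}(u)>F_Y^{-1}(u)\}$, the positivity of $b$ leaving the inequality unchanged. Hence the numerator of $\varepsilon_{\mathcal{W}_2}(F_{a+bX},F_{a+bY})$ equals $b^2\int_{A_0}(F_X^{-1}(u)-F_Y^{-1}(u))^2\,du$, and by \eqref{2-eq-wasserstein-quantile} its denominator equals $b^2\mathcal{W}_2^2(F_X,F_Y)$; the factor $b^2$ cancels, giving $\varepsilon_{\mathcal{W}_2}(F_{a+bX},F_{a+bY})=\varepsilon_{\mathcal{W}_2}(F_X,F_Y)$.

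For $b<0$ the new feature is the reflection $u\mapsto 1-u$. Here $F_{a+bX}^{-1}(u)-F_{a+bY}^{-1}(u)=b\bigl(F_X^{-1}(1-u)-F_Y^{-1}(1-u)\bigr)$ for a.e.\ $u$, and since $b<0$ the inequality reverses: the set on which $F_{a+bX}^{-1}>F_{a+bY}^{-1}$ is $\{u\in(0,1):F_X^{-1}(1-u)<F_Y^{-1}(1-u)\}=\{u:1-u\in A_1\}$, where $A_1=\{v\in(0,1):F_X^{-1}(v)<F_Y^{-1}(v)\}$ is precisely the set playing the role of $A_0$ for the reversed pair $(Y,X)$. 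Substituting $v=1-u$ in the numerator integral turns it into $b^2\int_{A_1}(F_Y^{-1}(v)-F_X^{-1}(v))^2\,dv$, i.e.\ $b^2$ times the numerator of $\varepsilon_{\mathcal{W}_2}(F_Y,F_X)$, while the denominator is $b^2\mathcal{W}_2^2(F_X,F_Y)=b^2\mathcal{W}_2^2(F_Y,F_X)$ by symmetry of $\mathcal{W}_2$. Cancelling $b^2$ gives $\varepsilon_{\mathcal{W}_2}(F_{a+bX},F_{a+bY})=\varepsilon_{\mathcal{W}_2}(F_Y,F_X)$.

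The computation is routine once the quantile transformation rules are recorded, which is why the authors omit it; the only point deserving a little care---the ``main obstacle'' here---is the bookkeeping in the $b<0$ case, combining the reflection $u\mapsto1-u$ with the interchange of the roles of $X$ and $Y$, and checking that passing between quantile and distribution descriptions (and the attendant ``for almost every $u$'' caveat) does not disturb the sets $A_0$, $A_1$ up to Lebesgue-null sets, which is all the integrals see.
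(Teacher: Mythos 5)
Your proof is correct, and since the paper explicitly omits the argument as a ``straightforward calculation,'' what you have written is evidently the intended computation: the affine quantile identities $F_{a+bX}^{-1}(u)=a+bF_X^{-1}(u)$ for $b>0$ and $F_{a+bX}^{-1}(u)=a+bF_X^{-1}(1-u)$ a.e.\ for $b<0$, cancellation of $b^2$ in numerator and denominator, and for $b<0$ the reflection $u\mapsto 1-u$ turning $A_0$ into $A_1$ so that the roles of $X$ and $Y$ interchange. You also correctly handle the two conventions ($0/0$ and $\infty/\infty$) and the null sets where the quantile identity for $b<0$ can fail, and your reading of the argument-order convention in $\varepsilon_{\mathcal{W}_2}$ is the one the paper actually uses in its later proofs and is consistent with \cref{corollary-wasserstein-2-location-scale}.
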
\vs

Choosing $a=0$ and $b=-1$, we immediately have the following corollary.
\begin{corollary}\label{corollary-wasserstein-2-location-scale}
	We have $\varepsilon_{\mathcal{W}_2}\left(F_{-Y},F_{-X}\right)=\varepsilon_{\mathcal{W}_2}\left(F_X,F_Y\right)$.
\end{corollary}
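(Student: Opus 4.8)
The plan is to deduce this directly from \cref{proposition-wasserstein-2-location-scale}, which has already been established for arbitrary affine maps $z \mapsto a + bz$ with $b \neq 0$. Concretely, I would apply that proposition with the roles of the two random variables interchanged: take the ``first'' variable to be $Y$ and the ``second'' variable to be $X$, and choose the parameters $a = 0$ and $b = -1$. Then $a + bY = -Y$ and $a + bX = -X$, so the left-hand side of the proposition becomes $\varepsilon_{\mathcal{W}_2}(F_{-Y}, F_{-X})$. Since $b = -1 < 0$, the second branch of the case split applies, and it returns $\varepsilon_{\mathcal{W}_2}$ of the two variables in their \emph{original} order, namely $\varepsilon_{\mathcal{W}_2}(F_X, F_Y)$. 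This is exactly the claimed identity.

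Alternatively --- and this is essentially the content of the omitted proof of \cref{proposition-wasserstein-2-location-scale} specialized to $b = -1$ --- I would argue from the quantile representation directly. The key identity is $F_{-X}^{-1}(u) = -F_X^{-1}(1-u)$ for all $u \in (0,1)$ outside an at most countable (hence Lebesgue-null) set, and likewise for $Y$. Substituting this into \eqref{2-eq-wasserstein-quantile} and performing the change of variables $v = 1-u$ shows $\mathcal{W}_2^2(F_{-X}, F_{-Y}) = \mathcal{W}_2^2(F_X, F_Y)$; in particular one of these vanishes iff the other does, so the $0/0$ convention is respected. For the numerator of $\varepsilon_{\mathcal{W}_2}(F_{-Y}, F_{-X})$, the integration region $\{u \in (0,1) : F_{-Y}^{-1}(u) > F_{-X}^{-1}(u)\}$ transforms under $v = 1-u$ into $\{v \in (0,1) : F_X^{-1}(v) > F_Y^{-1}(v)\} = A_0$, while the integrand transforms into $(F_X^{-1}(v) - F_Y^{-1}(v))^2$; hence $\int (F_{-Y}^{-1}(u) - F_{-X}^{-1}(u))^2\,du = \int_{A_0}(F_X^{-1}(v) - F_Y^{-1}(v))^2\,dv$, and the same change of variables also matches the $\infty/\infty$ convention. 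Taking the ratio of numerator to $\mathcal{W}_2^2$ on both sides gives the result.

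There is no substantive obstacle here; the corollary is immediate once \cref{proposition-wasserstein-2-location-scale} is in hand. The only point requiring a little care --- and the only reason the direct argument is not entirely automatic --- is the Lebesgue-null exceptional set in the reflection formula $F_{-X}^{-1}(u) = -F_X^{-1}(1-u)$, which arises from the left-continuous convention for inverses; since every quantity appearing in \eqref{2-eq-del-barrio-measure} is a Lebesgue integral, this discrepancy is harmless. One should also confirm that the two degenerate conventions ($\varepsilon_{\mathcal{W}_2} = 0$ when $\mathcal{W}_2 = 0$, and $\varepsilon_{\mathcal{W}_2} = 1$ when the numerator integral is infinite) agree on the two sides, which follows because reflection by $z \mapsto -z$ preserves both the vanishing of $\mathcal{W}_2$ and the finiteness of the numerator integral.
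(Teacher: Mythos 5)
Your first argument is exactly the paper's proof: the corollary is stated as an immediate consequence of \cref{proposition-wasserstein-2-location-scale} with $a=0$, $b=-1$, and you correctly note that one must apply the proposition with the roles of $X$ and $Y$ interchanged so that the $b<0$ branch returns $\varepsilon_{\mathcal{W}_2}(F_X,F_Y)$. The additional direct quantile computation is a sound (and slightly more careful) elaboration of the omitted proof of the proposition, but it is not needed beyond the one-line deduction.
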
\vs

\begin{theorem}
	\label{2-theorem-increasing-function-asymptotic-usual-stochastic-order}
	Let $\left\{X_t : t \in T\right\}$ and $\left\{Y_t : t \in T\right\}$ be two stochastic processes such that $X_t \leq_{\textnormal{d-ast}} Y_t$, as $t \to \infty$. Also let $\psi:\mathbb{R} \to \mathbb{R}$ be a strictly increasing, Lipschitz continuous function such that both $\psi(X_t)$ and $\psi(Y_t)$ have finite second order moments for every $t \in T$. Then, $\psi(X_t) \leq_{\textnormal{d-ast}} \psi(Y_t)$, provided $\mathcal{W}_2^2(F_{X_t},F_{Y_t})=O(\mathcal{W}_2^2(F_{\psi(X_t)},F_{\psi(Y_t)}))$.
\end{theorem}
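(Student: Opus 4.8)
The plan is to pull the estimate for the $\psi$-transformed processes back to the original ones through the Lipschitz constant of $\psi$, which is precisely what the hypothesis $\mathcal{W}_2^2(F_{X_t},F_{Y_t})=O(\mathcal{W}_2^2(F_{\psi(X_t)},F_{\psi(Y_t)}))$ is designed to permit. The starting observation is that, since $\psi$ is Lipschitz (hence continuous) and strictly increasing, the quantile function of $\psi(X_t)$ satisfies $F_{\psi(X_t)}^{-1}=\psi\circ F_{X_t}^{-1}$ on $(0,1)$, and likewise $F_{\psi(Y_t)}^{-1}=\psi\circ F_{Y_t}^{-1}$. Because $\psi$ is strictly increasing, $\psi(F_{X_t}^{-1}(u))>\psi(F_{Y_t}^{-1}(u))$ holds exactly when $F_{X_t}^{-1}(u)>F_{Y_t}^{-1}(u)$, so the violation set is unchanged: $\widetilde A_{0,t}:=\{u\in(0,1):F_{\psi(X_t)}^{-1}(u)>F_{\psi(Y_t)}^{-1}(u)\}=A_{0,t}:=\{u\in(0,1):F_{X_t}^{-1}(u)>F_{Y_t}^{-1}(u)\}$. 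For the same reason, $\mathcal{W}_2(F_{\psi(X_t)},F_{\psi(Y_t)})=0$ if and only if $\mathcal{W}_2(F_{X_t},F_{Y_t})=0$.

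Next I would bound the numerator of $\varepsilon_{\mathcal{W}_2}(F_{\psi(X_t)},F_{\psi(Y_t)})$. If $L$ is a Lipschitz constant for $\psi$, then $(\psi(a)-\psi(b))^2\le L^2(a-b)^2$ for all $a,b\in\mathbb{R}$, so, using $\widetilde A_{0,t}=A_{0,t}$,
\[
\int_{\widetilde A_{0,t}}\bigl(F_{\psi(X_t)}^{-1}(u)-F_{\psi(Y_t)}^{-1}(u)\bigr)^2\,du\le L^2\int_{A_{0,t}}\bigl(F_{X_t}^{-1}(u)-F_{Y_t}^{-1}(u)\bigr)^2\,du .
\]
By the $O$-hypothesis there are $C>0$ and $t_0\in T$ with $\mathcal{W}_2^2(F_{X_t},F_{Y_t})\le C\,\mathcal{W}_2^2(F_{\psi(X_t)},F_{\psi(Y_t)})$ for all $t\ge t_0$; since $\psi(X_t),\psi(Y_t)$ have finite second moments, $\mathcal{W}_2(F_{\psi(X_t)},F_{\psi(Y_t)})$ is finite, so this also keeps $\mathcal{W}_2(F_{X_t},F_{Y_t})$ finite for $t\ge t_0$. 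Hence, for $t\ge t_0$, the right-hand side above equals $L^2\,\varepsilon_{\mathcal{W}_2}(F_{X_t},F_{Y_t})\,\mathcal{W}_2^2(F_{X_t},F_{Y_t})$ (valid by the stated conventions whether $\mathcal{W}_2(F_{X_t},F_{Y_t})$ is positive or zero), and applying the $O$-bound gives
\[
\int_{\widetilde A_{0,t}}\bigl(F_{\psi(X_t)}^{-1}(u)-F_{\psi(Y_t)}^{-1}(u)\bigr)^2\,du\le L^2C\,\varepsilon_{\mathcal{W}_2}(F_{X_t},F_{Y_t})\,\mathcal{W}_2^2(F_{\psi(X_t)},F_{\psi(Y_t)}).
\]

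To finish I would split on $\mathcal{W}_2(F_{\psi(X_t)},F_{\psi(Y_t)})$, exactly as in the proof of \cref{2-theorem-transitivity-asymptotic-usual-stochastic-order}: when it is $0$ the convention forces $\varepsilon_{\mathcal{W}_2}(F_{\psi(X_t)},F_{\psi(Y_t)})=0$; when it is positive, dividing the last display by $\mathcal{W}_2^2(F_{\psi(X_t)},F_{\psi(Y_t)})$ yields $\varepsilon_{\mathcal{W}_2}(F_{\psi(X_t)},F_{\psi(Y_t)})\le L^2C\,\varepsilon_{\mathcal{W}_2}(F_{X_t},F_{Y_t})$. Thus this inequality holds for every $t\ge t_0$, and letting $t\to\infty$, the right-hand side tends to $0$ because $X_t\leq_{\textnormal{d-ast}}Y_t$; together with $\varepsilon_{\mathcal{W}_2}\ge 0$ this gives $\lim_{t\to\infty}\varepsilon_{\mathcal{W}_2}(F_{\psi(X_t)},F_{\psi(Y_t)})=0$, i.e. $\psi(X_t)\leq_{\textnormal{d-ast}}\psi(Y_t)$. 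The only genuinely delicate points are the quantile identity $F_{\psi(X_t)}^{-1}=\psi\circ F_{X_t}^{-1}$ (which needs continuity and strict monotonicity of $\psi$, both available) and the bookkeeping around the $0/0$ and $\infty/\infty$ conventions defining $\varepsilon_{\mathcal{W}_2}$; the latter is handled by the same case analysis already used for transitivity, and everything else is the one-line Lipschitz estimate combined with the $O$-hypothesis.
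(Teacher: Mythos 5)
Your proposal is correct and follows essentially the same route as the paper's proof: the quantile identity $F_{\psi(X_t)}^{-1}=\psi\circ F_{X_t}^{-1}$, invariance of the violation set under the strictly increasing $\psi$, the Lipschitz bound on the numerator, the $O$-hypothesis to convert $\mathcal{W}_2^2(F_{X_t},F_{Y_t})$ into $\mathcal{W}_2^2(F_{\psi(X_t)},F_{\psi(Y_t)})$, and the case analysis on vanishing Wasserstein distance before passing to the limit. Your handling of the degenerate cases is, if anything, slightly tidier than the paper's, which splits the same cases in a more verbose way.
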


\begin{proof}
	Using strict increasingness of $\psi$, we find the distribution function of $\psi(X_t)$ to be $F_{\psi(X_t)}(x)=P\left\{\psi(X_t) \leq x\right\}=P\left\{X_t \leq \psi^{-1}(x)\right\}=F_{X_t} \circ \psi^{-1}(x)$ for every $x \in \mathbb{R}$. Thus, $F_{\psi(X_t)}^{-1}(u)=\psi \circ F_{X_t}^{-1}(u)$, for every $u \in [0,1]$. Similarly, $F_{\psi(Y_t)}^{-1}(u)=\psi \circ F_{Y_t}^{-1}(u)$, for every $u \in [0,1]$. Hence, by strict increasingness of $\psi$, we have
	\[
	\big\{u \in (0,1) : F_{\psi(X_t)}^{-1}(u)>F_{\psi(Y_t)}^{-1}(u)\big\}=\big\{u \in (0,1) : F_{X_t}^{-1}(u)>F_{Y_t}^{-1}(u)\big\}.
	\]
	Since $\psi$ is Lipshcitz continuous, there exists $K>0$, such that $\left\vert \psi(x)-\psi(y) \right\vert \leq K \left\vert x-y \right\vert$, for every $x,y \in \mathbb{R}$. Now
	\begin{align}\label{2-eq-theorem-increasing-function-numerator}
		&\phantom{\,\,\,\,\,\,\,\,}\int_{\left\{u \in (0,1) : F_{\psi(X_t)}^{-1}(u)>F_{\psi(Y_t)}^{-1}(u)\right\}} (F_{\psi(X_t)}^{-1}(u)-F_{\psi(Y_t)}^{-1}(u))^2du\nonumber\\
		&=\int_{\left\{u \in (0,1) : F_{X_t}^{-1}(u)>F_{Y_t}^{-1}(u)\right\}} (\psi(F_{X_t}^{-1}(u))-\psi(F_{Y_t}^{-1}(u)))^2du\nonumber\\
		&\leq K^2 \int_{\left\{u \in (0,1) : F_{X_t}^{-1}(u)>F_{Y_t}^{-1}(u)\right\}} (F_{X_t}^{-1}(u)-F_{Y_t}^{-1}(u))^2du\nonumber\\
		&\leq K^2 \mathcal{W}_2^2(F_{X_t},F_{Y_t})=0,
	\end{align}
	if $\mathcal{W}_2^2(F_{X_t},F_{Y_t})=0$. Now, if $\mathcal{W}_2(F_{\psi(X_t)},F_{\psi(Y_t)})=0$, then by convention $\varepsilon_{\mathcal{W}_2}(F_{\psi(X_t)},F_{\psi(Y_t)})=0$. Again, if $\mathcal{W}_2(F_{\psi(X_t)},F_{\psi(Y_t)})>0$, then from \eqref{2-eq-theorem-increasing-function-numerator}, we have $\varepsilon_{\mathcal{W}_2}(F_{\psi(X_t)},F_{\psi(Y_t)}) \leq 0$. Since $\varepsilon_{\mathcal{W}_2}(F_{\psi(X_t)},F_{\psi(Y_t)})$ is nonnegative for every $t \in T$, we obtain $\varepsilon_{\mathcal{W}_2}(F_{\psi(X_t)},F_{\psi(Y_t)})=0$. On the other hand, if $\mathcal{W}_2^2(F_{X_t},F_{Y_t})>0$, then
	\[
	\int_{\left\{u \in (0,1) : F_{\psi(X_t)}^{-1}(u)>F_{\psi(Y_t)}^{-1}(u)\right\}} (F_{\psi(X_t)}^{-1}(u)-F_{\psi(Y_t)}^{-1}(u))^2du \leq K^2 \varepsilon_{\mathcal{W}_2}(F_{X_t},F_{Y_t}) \mathcal{W}_2^2(F_{X_t},F_{Y_t}).
	\]
	Since $\mathcal{W}_2^2(F_{X_t},F_{Y_t})=O(\mathcal{W}_2^2(F_{\psi(X_t)},F_{\psi(Y_t)}))$, there exists $C>0$ such that $\mathcal{W}_2^2(F_{X_t},F_{Y_t}) \leq C\mathcal{W}_2^2(F_{\psi(X_t)},F_{\psi(Y_t)})$, for every $t \geq t_0$, for some $t_0 \in T$. Thus, for every $t \geq t_0$,
	\[
	\int_{\left\{u \in (0,1) : F_{\psi(X_t)}^{-1}(u)>F_{\psi(Y_t)}^{-1}(u)\right\}} (F_{\psi(X_t)}^{-1}(u)-F_{\psi(Y_t)}^{-1}(u))^2du \leq CK^2 \varepsilon_{\mathcal{W}_2}(F_{X_t},F_{Y_t}) \mathcal{W}_2^2(F_{\psi(X_t)},F_{\psi(Y_t)}).
	\]
	Now, if $\mathcal{W}_2(F_{\psi(X_t)},F_{\psi(Y_t)})=0$, then by convention $\varepsilon_{\mathcal{W}_2}(F_{\psi(X_t)},F_{\psi(Y_t)})=0$. Again, if $\mathcal{W}_2(F_{\psi(X_t)},F_{\psi(Y_t)})>0$, then we have $\varepsilon_{\mathcal{W}_2}(F_{\psi(X_t)},F_{\psi(Y_t)}) \leq CK^2 \varepsilon_{\mathcal{W}_2}(F_{Y_t},F_{X_t})$. Since $X_t \leq_{\textnormal{d-ast}} Y_t$, we have $\lim_{t \to \infty} \varepsilon_{\mathcal{W}_2}(F_{X_t},F_{Y_t})=0$. Thus, $\lim_{t \to \infty} \varepsilon_{\mathcal{W}_2}(F_{\psi(X_t)},F_{\psi(Y_t)}) \leq 0$. Again, since $\varepsilon_{\mathcal{W}_2}(F_{\psi(X_t)},F_{\psi(Y_t)})$ is nonnegative for every $t \in T$, we have the reverse inequality, which completes the proof.
\end{proof}\vs

Noting that if $\psi$ is strictly decreasing, then $-\psi$ is strictly increasing and using \cref{corollary-wasserstein-2-location-scale}, we have the following corollary.

\begin{corollary}
	\label{2-corollary-increasing-function-asymptotic-usual-stochastic-order}
	Let $\left\{X_t : t \in T\right\}$ and $\left\{Y_t : t \in T\right\}$ be two stochastic processes such that $X_t \leq_{\textnormal{d-ast}} Y_t$, as $t \to \infty$. Also let $\psi:\mathbb{R} \to \mathbb{R}$ be a strictly decreasing, Lipschitz continuous function such that both $\psi(X_t)$ and $\psi(Y_t)$ have finite second order moments for every $t \in T$. Then, $\psi(Y_t) \leq_{\textnormal{d-ast}} \psi(X_t)$, provided $\mathcal{W}_2^2(F_{X_t},F_{Y_t})=O(\mathcal{W}_2^2(F_{\psi(X_t)},F_{\psi(Y_t)}))$.
\end{corollary}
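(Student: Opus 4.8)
The plan is to reduce the strictly decreasing case to the strictly increasing one already treated in \cref{2-theorem-increasing-function-asymptotic-usual-stochastic-order}. Put $\phi=-\psi$. Since $\psi$ is strictly decreasing and Lipschitz continuous with constant $K>0$, the function $\phi$ is strictly increasing and Lipschitz continuous with the same constant $K$; moreover $\phi(X_t)=-\psi(X_t)$ and $\phi(Y_t)=-\psi(Y_t)$ have finite second order moments for every $t\in T$ because $\psi(X_t)$ and $\psi(Y_t)$ do (negation does not affect moments).

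Next I would record that the $L_2$-Wasserstein distance is unchanged under negating both arguments. Using the representation \eqref{2-eq-wasserstein-quantile} together with the identity $F_{-Z}^{-1}(u)=-F_Z^{-1}(1-u)$ (valid for a.e. $u$) and the substitution $v=1-u$, one obtains $\mathcal{W}_2(F_{-V},F_{-W})=\mathcal{W}_2(F_V,F_W)$ for all $V,W$ with finite second moments; this is simply the $b=-1$ instance of the scaling behaviour behind \cref{proposition-wasserstein-2-location-scale}. Taking $V=\psi(X_t)$ and $W=\psi(Y_t)$ gives $\mathcal{W}_2^2(F_{\phi(X_t)},F_{\phi(Y_t)})=\mathcal{W}_2^2(F_{\psi(X_t)},F_{\psi(Y_t)})$, so the hypothesis $\mathcal{W}_2^2(F_{X_t},F_{Y_t})=O(\mathcal{W}_2^2(F_{\psi(X_t)},F_{\psi(Y_t)}))$ reads exactly as $\mathcal{W}_2^2(F_{X_t},F_{Y_t})=O(\mathcal{W}_2^2(F_{\phi(X_t)},F_{\phi(Y_t)}))$, which is precisely the condition required by \cref{2-theorem-increasing-function-asymptotic-usual-stochastic-order} applied to $\phi$.

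Now, since $X_t\leq_{\textnormal{d-ast}}Y_t$ and $\phi$ meets all the requirements, \cref{2-theorem-increasing-function-asymptotic-usual-stochastic-order} yields $\phi(X_t)\leq_{\textnormal{d-ast}}\phi(Y_t)$, that is, $\lim_{t\to\infty}\varepsilon_{\mathcal{W}_2}(F_{-\psi(X_t)},F_{-\psi(Y_t)})=0$. Applying \cref{corollary-wasserstein-2-location-scale} with $\psi(Y_t)$ in the role of $X$ and $\psi(X_t)$ in the role of $Y$ gives $\varepsilon_{\mathcal{W}_2}(F_{-\psi(X_t)},F_{-\psi(Y_t)})=\varepsilon_{\mathcal{W}_2}(F_{\psi(Y_t)},F_{\psi(X_t)})$ for every $t\in T$, and letting $t\to\infty$ we conclude $\lim_{t\to\infty}\varepsilon_{\mathcal{W}_2}(F_{\psi(Y_t)},F_{\psi(X_t)})=0$, i.e. $\psi(Y_t)\leq_{\textnormal{d-ast}}\psi(X_t)$, as desired.

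The proof is essentially bookkeeping and I do not anticipate a genuine obstacle; the only points that need care are tracking which distribution function plays which role in \cref{corollary-wasserstein-2-location-scale}, and checking that passing from $\psi$ to $\phi=-\psi$ preserves strict monotonicity (in the reversed direction), the Lipschitz constant, finiteness of the relevant second moments, and the value of $\mathcal{W}_2$, so that both the order hypothesis and the $O(\cdot)$-hypothesis of \cref{2-theorem-increasing-function-asymptotic-usual-stochastic-order} transfer cleanly.
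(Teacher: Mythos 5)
Your proposal is correct and follows essentially the same route the paper indicates: reduce to \cref{2-theorem-increasing-function-asymptotic-usual-stochastic-order} via $\phi=-\psi$ and then convert back with \cref{corollary-wasserstein-2-location-scale}. The paper leaves the verification as a one-line remark; your write-up just fills in the bookkeeping (transfer of the Lipschitz constant, moments, and the $O(\cdot)$ condition via the $\mathcal{W}_2$-invariance under negation), all of which is accurate.
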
\vs

\begin{remark}
	The condition $\mathcal{W}_2^2(F_{X_t},F_{Y_t})=O(\mathcal{W}_2^2(\psi(F_{X_t}),\psi(F_{Y_t})))$ essentially prevents the $\mathcal{W}_2$-distance between $\psi(F_{X_t})$ and $\psi(F_{Y_t})$ to diminish rapidly compared to the same between $X_t$ and $Y_t$, as $t \to \infty$. In particular, the condition holds if $\psi$ induces a location-scale transformation.
\end{remark}\vs

\section{Departure-based asymptotic stochastic ordering of order statistics}\label{2-section-order-statistics}

The main result regarding asymptotic stochastic comparison of certain stochastic processes, stated in \cref{2-section-distorted-distributions}, involves a number of conditions which may seem abstract at a first glance. To motivate these conditions, we first study departure-based asymptotic stochastic ordering of order statistics from two independent homogeneous samples, as the sample size $n$ becomes large. Order statistics have been a widely discussed topic in the literature of various fields of study. Given a sample $\left\{X_1,X_2,\ldots,X_n\right\}$ of random observations, let us denote the order statistics by $X_{1:n} \leq X_{2:n} \leq \ldots \leq X_{n:n}$, where $X_{k:n}$ is the $k$th order statistic. Observe that $\left\{X_{k:n} : n \in \mathbb{N}\right\}$ may be considered as a valid discrete-time stochastic process. In \cref{2-proposition-n-gamma-rate-special-case-largest-order}, we have essentially compared the sequences $\left\{X_{n:n} : n \in \mathbb{N}\right\}$ and $\left\{Y_{n:n} : n \in \mathbb{N}\right\}$ of largest order statistics arising from the respective parent distributions $F_X$ and $F_Y$, as $n \to \infty$. Let $\left\{X_1,X_2,\ldots,X_n\right\}$ be a random sample from a distribution $F_X$ and let $\left\{Y_1,Y_2,\ldots,Y_n\right\}$ be that from a distribution $F_Y$. Assume that $F_X$ and $F_Y$ are continuous, strictly increasing and have finite second order moments. The main goal of this section is to make asymptotic stochastic comparison between $X_{1+\left[(n-1)\gamma_n\right]:n}$ and $Y_{1+\left[(n-1)\gamma_n\right]:n}$, where $\left\{\gamma_n : n \in \mathbb{N}\right\}$ is a $[0,1]$-valued sequence that converges to some $\gamma \in [0,1]$, as $n \to \infty$. The reason for choosing the $(1+\left[(n-1)\gamma_n\right])$th order statistic for comparison purpose is that it gives a coverage of various types of order statistics encountered in the literature. In particular,
\begin{enumerate}[label=(\roman*)]
	\item $\gamma_n=0$, for every $n \in \mathbb{N}$ : smallest order statistic.
	\item $\gamma_n=1$, for every $n \in \mathbb{N}$ : largest order statistic.
	\item $\gamma_n=\frac{k-1}{n-1}$, for every $n \in \mathbb{N}$ : $k$th order statistic.
	\item $\gamma_n=\frac{n-k}{n-1}$, for every $n \in \mathbb{N}$ : $(n-k+1)$th order statistic.
	\item $\gamma_n=\gamma \in (0,1)$, for every $n \in \mathbb{N}$ : central order statistics.
\end{enumerate}

Note that, for the constant sequences $\gamma_n=\gamma \in [0,1]$, for every $n \in \mathbb{N}$, $1+\left[(n-1)\gamma\right]$ goes from $1$ to $n$, as $\gamma$ traverses from $0$ to $1$. Hence it covers all the central order statistics as well as the smallest and the largest order statistics. We have to consider nonconstant sequences to accommodate the extreme order statistics, which are characterized by $\gamma_n=O(1/n)$ or $1-\gamma_n=O(1/n)$, apart from the smallest and the largest order statistics. All the cases described above fall under the common umbrella given by the following rate of convergence $\lvert \gamma_n-\gamma \rvert=O(1/n)$. The respective cdfs of $X_{1+\left[(n-1)\gamma_n\right]:n}$ and $Y_{1+\left[(n-1)\gamma_n\right]:n}$ are given by $F_{X_{1+\left[(n-1)\gamma_n\right]:n}}(x)=\phi_{n,\gamma_n}(F_X(x))$ and $F_{Y_{1+\left[(n-1)\gamma_n\right]:n}}(x)=\phi_{n,\gamma_n}(F_Y(x))$ for every $x \in \mathbb{R}$, where
\begin{equation}
\label{2-eq-phi-n-alpha}
\phi_{n,\alpha}(t)=\sum_{j=1+\left[(n-1)\alpha\right]}^n \binom{n}{j} t^j (1-t)^{n-j},
\end{equation}
for every $n \in \mathbb{N}$, $\alpha \in [0,1]$ and $t \in [0,1]$. The next sequence of results are useful in proving the main result of this section. Some of the proofs are given in the appendix.

\begin{lemma}\label{2-lemma-A_0-B_0}
	Let $F_X$ and $F_Y$ be two continuous and strictly increasing cdfs. We have
	$A_0=F_X(B_0)=F_Y(B_0)$, $A_1=F_X(B_1)=F_Y(B_1)$ and $A_2=F_X(B_2)=F_Y(B_2)$.
\end{lemma}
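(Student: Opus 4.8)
The plan is to exploit the fact that for continuous, strictly increasing cdfs, the maps $F_X$ and $F_Y$ are homeomorphisms from $\mathbb{R}$ (or the support interval) onto $(0,1)$, with $F_X^{-1}$ and $F_Y^{-1}$ their genuine two-sided inverses. I would first record the two identities I need from this: for every $x\in\mathbb{R}$, $F_X^{-1}(F_X(x))=x$ and $F_Y^{-1}(F_Y(x))=x$; and for every $u\in(0,1)$, $F_X(F_X^{-1}(u))=u$ and $F_Y(F_Y^{-1}(u))=u$. Also, strict monotonicity gives the order-reversing-free equivalences $F_X^{-1}(u)<F_Y^{-1}(u) \iff$ (applying $F_Y$, which is strictly increasing) $F_Y(F_X^{-1}(u))<u$, and similarly via $F_X$. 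These are the only tools required; the rest is bookkeeping.

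I would prove $A_0=F_X(B_0)$ by double inclusion, and the argument for $A_1$, $A_2$ and for the $F_Y$-versions is identical \emph{mutatis mutandis}. For $F_X(B_0)\subseteq A_0$: take $x\in B_0$, so $F_X(x)<F_Y(x)$; set $u=F_X(x)\in(0,1)$. Applying $F_Y^{-1}$ (strictly increasing) to $F_X(x)<F_Y(x)$ gives $F_Y^{-1}(F_X(x))<F_Y^{-1}(F_Y(x))=x$. But $F_X^{-1}(u)=F_X^{-1}(F_X(x))=x$, so $F_Y^{-1}(u)<x=F_X^{-1}(u)$, i.e. $u\in A_0$. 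For $A_0\subseteq F_X(B_0)$: take $u\in A_0$, so $F_Y^{-1}(u)<F_X^{-1}(u)$; set $x=F_X^{-1}(u)$, which lies in the (open) support of $F_X$, hence $u=F_X(x)$. Applying $F_Y$ (strictly increasing) to $F_Y^{-1}(u)<x$ gives $u=F_Y(F_Y^{-1}(u))<F_Y(x)$, while $F_X(x)=u$, so $F_X(x)<F_Y(x)$, i.e. $x\in B_0$ and $u=F_X(x)\in F_X(B_0)$. The identity $F_X(B_0)=F_Y(B_0)$ then follows because on $B_0$ one can equally parametrize by $y=F_Y^{-1}(u)$; concretely, running the same computation with the roles of the two cdfs swapped shows $A_1$ (for the swapped pair, which is $A_0$ for the original since $A_0$ for $(X,Y)$ equals the set where $F_Y^{-1}<F_X^{-1}$) equals $F_Y(B_0)$ — or, more cleanly, I would observe directly that for $u\in A_0$, $x=F_X^{-1}(u)$ and $y=F_Y^{-1}(u)$ need not coincide, so $F_X(B_0)=F_Y(B_0)$ deserves its own one-line check: $u\in A_0\Rightarrow y=F_Y^{-1}(u)\in B_0$ (by the symmetric computation, since $F_X(y)<F_Y(y)=u$ follows from $y<x=F_X^{-1}(u)$ and applying $F_X$) and $u=F_Y(y)$, giving $A_0\subseteq F_Y(B_0)$; the reverse inclusion $F_Y(B_0)\subseteq A_0$ is the mirror of the first inclusion above.

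The cases $A_1$ and $A_2$ require no new idea: $A_1$ is handled by reversing every inequality in the $A_0$ argument, and $A_2=A_0\cup A_1$, $B_2=B_0\cup B_1$ with the unions disjoint, so $F_X(B_2)=F_X(B_0)\cup F_X(B_1)=A_0\cup A_1=A_2$ and likewise for $F_Y$. I do not anticipate a genuine obstacle here — the only point demanding care is ensuring that $F_X^{-1}$ and $F_Y^{-1}$ really are honest inverses on the relevant domains, which is exactly where continuity (surjectivity onto $(0,1)$, so $F\circ F^{-1}=\mathrm{id}$) and strict increasingness (injectivity, so $F^{-1}\circ F=\mathrm{id}$) are both used; without strict increasingness the set $B_0$ could contain whole intervals collapsing to a single point under $F_X$, and without continuity the range of $F_X$ could skip values, breaking $F_X(F_X^{-1}(u))=u$. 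I would flag these two hypotheses explicitly at the start of the proof and then the inclusions are routine.
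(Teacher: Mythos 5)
Your proof is correct and follows essentially the same route as the paper's: both arguments rest on the two identities $F^{-1}(F(x))=x$ (from strict increasingness) and $F(F^{-1}(u))=u$ (from continuity) and push the defining inequalities through the monotone maps, the only cosmetic difference being that the paper phrases the inclusion $F_X(B_0)\subseteq A_0$ as a proof by contradiction while you argue it directly. Your explicit separate check that $A_0\subseteq F_Y(B_0)$ (since $F_X^{-1}(u)$ and $F_Y^{-1}(u)$ are different points) is a welcome piece of care that the paper glosses over with ``similarly.''
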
\vs

\begin{remark}
Roughly, \cref{2-lemma-A_0-B_0} conveys that if $F_X$ and $F_Y$ are well-behaved, i.e. they are continuous and strictly increasing, then the comparative behavior of $F_X$ and $F_Y$ reflects in the same of $F_X^{-1}$ and $F_Y^{-1}$. To be precise, if $F_X>F_Y$ around a point $x_0 \in \mathbb{R}$, then $F_X^{-1}<F_Y^{-1}$ in an open interval containing both $F_X^{-1}(x_0)$ and $F_Y^{-1}(x_0)$. \hfill $\blacksquare$
\end{remark}\vs

The next proposition outlines some key properties of $\phi_{n,\alpha}$, defined in \eqref{2-eq-phi-n-alpha}.
\begin{proposition}
\label{2-proposition-phi_n-r}
	For every $n \in \mathbb{N}$, we have
	\begin{enumerate}[label=\textnormal{(\arabic*)}]
		\item\label{2-item-r-0-1} $\phi_{n,\alpha}$ is a distortion function.
		
		\item\label{2-item-r-continuous-differentiable} $\phi_{n,\alpha}$ is continuous in $[0,1]$ and differentiable in $(0,1)$.
		
		\item\label{2-item-r-increasing-in-t} $\phi_{n,\alpha}(t)$ is strictly increasing in $(0,1)$.
		
		\item\label{2-item-r-derivative-continuous-differentiable} $\phi_{n,\alpha}'(t)$ is continuous in $[0,1]$ and differentiable in $(0,1)$.
		
		\item\label{2-item-r-increasing-decreasing-derivative} $\phi_{n,\alpha}'(t)$ is increasing in $\left(0,\frac{\left[(n-1)\alpha\right]}{n-1}\right)$ and decreasing in $\left(\frac{\left[(n-1)\alpha\right]}{n-1},1\right)$. \hfill $\blacksquare$
	\end{enumerate}
\end{proposition}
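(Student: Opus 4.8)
The plan is to route all five assertions through one explicit formula for the derivative of $\phi_{n,\alpha}$. Throughout, abbreviate $k = 1 + [(n-1)\alpha]$, so that $1 \le k \le n$, $k-1 = [(n-1)\alpha]$, and $\phi_{n,\alpha}(t) = \sum_{j=k}^{n} \binom{n}{j} t^j (1-t)^{n-j}$. The first and only substantive step is the telescoping identity
\[
\phi_{n,\alpha}'(t) = n\binom{n-1}{k-1}\, t^{k-1}(1-t)^{n-k}, \qquad t \in (0,1).
\]
To prove it, differentiate the sum term by term; using $j\binom{n}{j} = n\binom{n-1}{j-1}$ and $(n-j)\binom{n}{j} = n\binom{n-1}{j}$, the derivative of the $j$-th summand equals $a_j - a_{j+1}$, where $a_j = n\binom{n-1}{j-1} t^{j-1}(1-t)^{n-j}$, and summing over $j = k, \dots, n$ collapses the series to $a_k$ (since $a_{n+1} = 0$). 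This is the only step that requires genuine work; the remaining parts are then routine, so I regard establishing this identity cleanly as the main obstacle.

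Parts (2) and (4) are then immediate: $\phi_{n,\alpha}$ is a polynomial, hence continuous on $[0,1]$ and differentiable on $(0,1)$, and by the displayed identity $\phi_{n,\alpha}'$ is again a polynomial, so it too is continuous on $[0,1]$ and differentiable on $(0,1)$; in particular the one-sided limits $\phi_{n,\alpha}'(0)$ and $\phi_{n,\alpha}'(1)$ exist in the sense of the convention fixed in \cref{2-section-introduction}. For part (3), since $k-1 \ge 0$ and $n-k \ge 0$, the identity gives $\phi_{n,\alpha}'(t) > 0$ for every $t \in (0,1)$, so $\phi_{n,\alpha}$ is strictly increasing there. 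For part (1), $\phi_{n,\alpha}$ is nondecreasing on $[0,1]$ by part (3) together with continuity; moreover $\phi_{n,\alpha}(0) = 0$ because every summand carries a factor $t^j$ with $j \ge k \ge 1$, and $\phi_{n,\alpha}(1) = \binom{n}{n} = 1$ because only the $j = n$ summand survives at $t=1$; hence $\phi_{n,\alpha}$ is a distortion function.

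Finally, part (5) comes from differentiating the closed form once more. Writing $c = n\binom{n-1}{k-1} > 0$, for $t \in (0,1)$ and $2 \le k \le n-1$ one gets
\[
\phi_{n,\alpha}''(t) = c\,\big[(k-1)t^{k-2}(1-t)^{n-k} - (n-k)t^{k-1}(1-t)^{n-k-1}\big] = c\, t^{k-2}(1-t)^{n-k-1}\big[(k-1) - (n-1)t\big],
\]
so the sign of $\phi_{n,\alpha}''(t)$ on $(0,1)$ coincides with the sign of the affine factor $(k-1) - (n-1)t$, which is positive exactly for $t < \frac{k-1}{n-1} = \frac{[(n-1)\alpha]}{n-1}$ and negative exactly for $t > \frac{[(n-1)\alpha]}{n-1}$; this is precisely the monotonicity claimed for $\phi_{n,\alpha}'$. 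The boundary cases need only a direct check: if $k = 1$ then $\phi_{n,\alpha}'(t) = n(1-t)^{n-1}$ is decreasing on $(0,1)$ while the interval $\big(0, \tfrac{[(n-1)\alpha]}{n-1}\big)$ is empty, and if $k = n$ then $\phi_{n,\alpha}'(t) = nt^{n-1}$ is increasing on $(0,1)$ while $\big(\tfrac{[(n-1)\alpha]}{n-1}, 1\big)$ is empty (the degenerate case $n = 1$, where $\phi_{1,\alpha}(t) = t$, is subsumed here). Thus all five parts hold.
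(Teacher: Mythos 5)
Your proof is correct and follows exactly the route the paper intends: the paper omits the proof of this proposition as routine but records the same closed-form derivative in \eqref{2-eq-n-gamma-phi-n-first-derivative}, which you derive via the standard telescoping argument and then use to read off all five parts, including the correct factorization of $\phi_{n,\alpha}''$ and the boundary cases $k=1$, $k=n$.
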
\vs

\cref{2-fig-orderstat1} and \cref{2-fig-orderstat2} illustrate the properties of $\phi_{n,\gamma_n}'$, given in \cref{2-proposition-phi_n-r}. We shall need the following computation later in the paper.
\begin{equation}\label{2-eq-n-gamma-phi-n-first-derivative}
	\phi_{n,\alpha}'(t)=n\binom{n-1}{\left[(n-1)\alpha\right]} t^{\left[(n-1)\alpha\right]}(1-t)^{n-\left[(n-1)\alpha\right]-1}.
\end{equation}

\begin{figure}
	\centering
	\begin{subfigure}{0.45\textwidth}
		\centering
		\includegraphics[width=\linewidth]{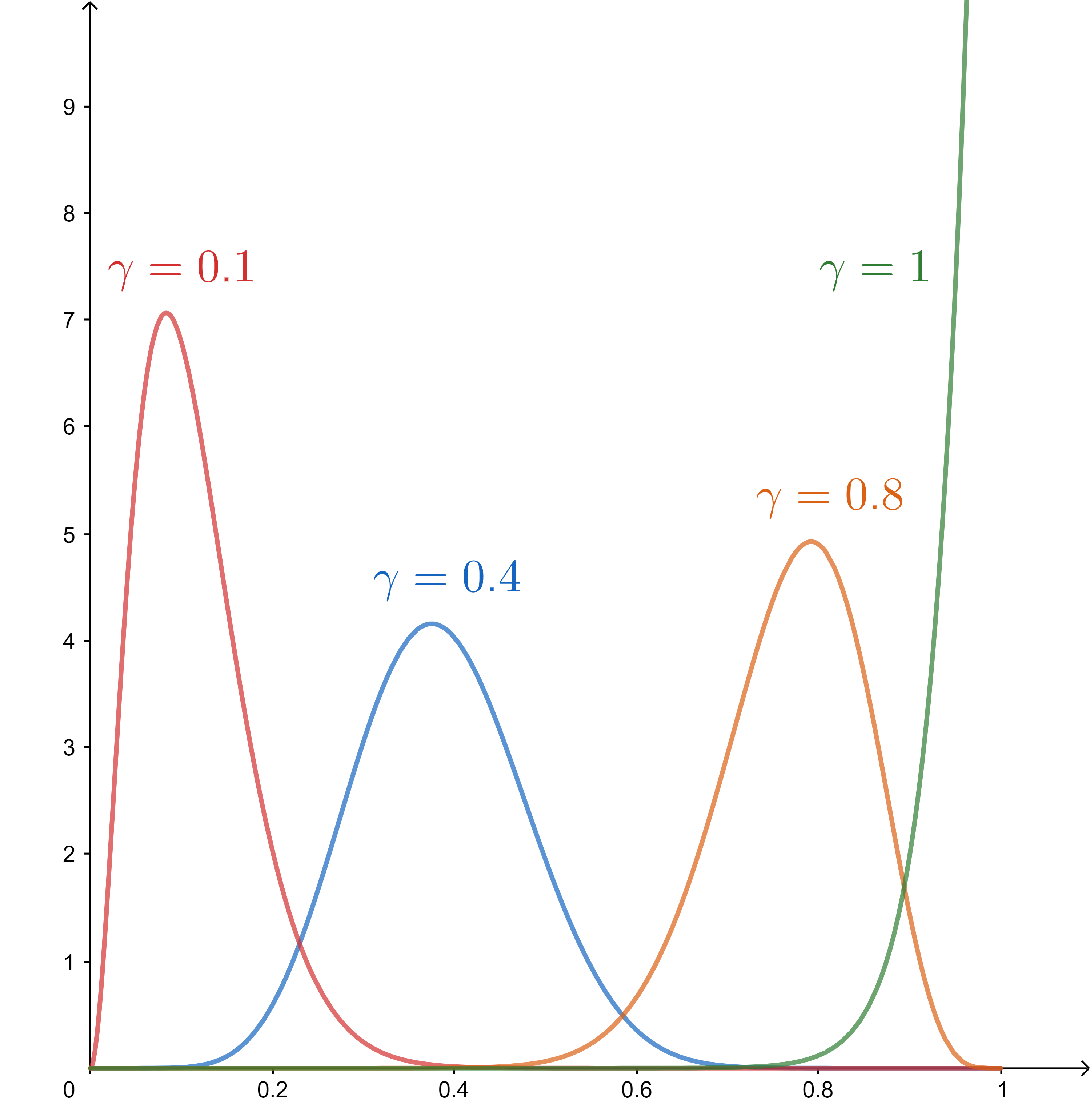}
		\caption{\centering Plots of $\phi_{n,\gamma_n}'$ for $n=10$ and constant sequences $\gamma_n=\gamma=0.1,0.4,0.8$ and $1$}
		\label{2-fig-orderstat1}
	\end{subfigure}
	\hfill
	\begin{subfigure}{0.45\textwidth}
		\centering
		\includegraphics[width=1.03\linewidth]{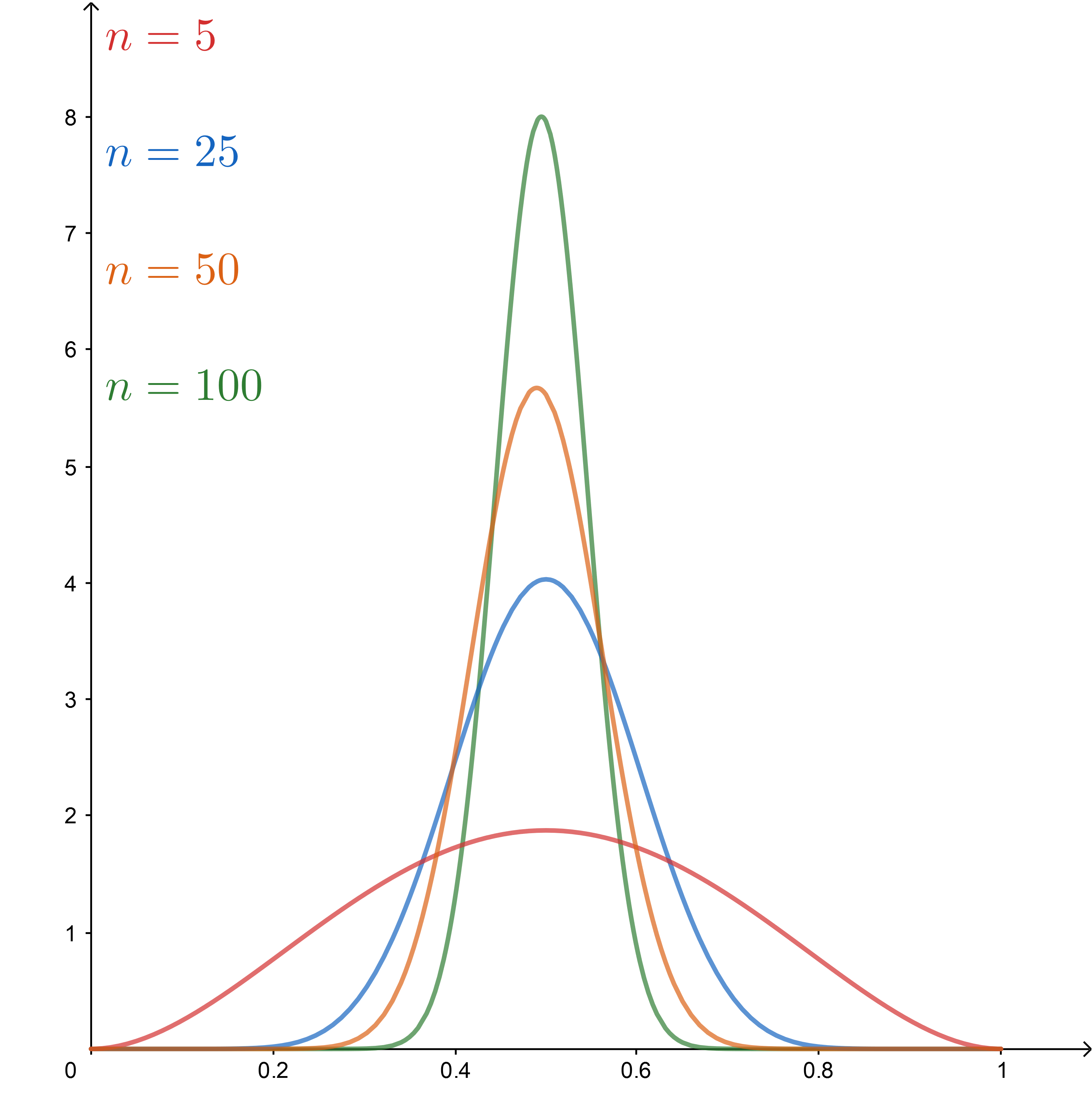}
		\caption{\centering Plots of $\phi_{n,\gamma_n}'$ for constant sequence $\gamma_n=\gamma=1/2$ and $n=5,25,50$ and $100$}
		\label{2-fig-orderstat2}
	\end{subfigure}
	\caption{}
	\label{2-fig-orderstat}
\end{figure}

\begin{lemma}
	\label{2-lemma-xi}
	Let $\gamma \in [0,1]$. Also, let $\xi_{\gamma}:(0,1) \times (0,1) \to \mathbb{R}$ be given by
	\[
	\xi_{\gamma}(t,s)=\left(\frac{t}{s}\right)^{\gamma} \left(\frac{1-t}{1-s}\right)^{1-\gamma}.
	\]
	Then we have $\xi_{\gamma}(t,s)<1$ if $0<t<s \leq \gamma \text{ or } 1>t>s \geq \gamma$.
\end{lemma}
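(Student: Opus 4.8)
The plan is to take logarithms and study the sign of $\log \xi_\gamma(t,s)$ as a function of $t$ for fixed $s$. Write
\[
g(t) = \log \xi_\gamma(t,s) = \gamma \log t - \gamma \log s + (1-\gamma)\log(1-t) - (1-\gamma)\log(1-s),
\]
so that $\xi_\gamma(t,s) < 1$ is equivalent to $g(t) < 0$. Note that $g(s) = 0$, so it suffices to show $g$ is strictly decreasing on a neighbourhood that makes the two stated ranges move $t$ away from $s$ in the right direction. Differentiating, $g'(t) = \gamma/t - (1-\gamma)/(1-t) = (\gamma - t)/(t(1-t))$. Hence $g'(t) > 0$ for $t < \gamma$ and $g'(t) < 0$ for $t > \gamma$; in other words $g$ increases on $(0,\gamma)$ and decreases on $(\gamma,1)$, attaining its maximum at $t = \gamma$ (when $\gamma \in (0,1)$; the boundary cases $\gamma = 0,1$ are degenerate and can be checked directly since then $\xi_\gamma(t,s) = (1-t)/(1-s)$ or $t/s$ respectively).

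Now consider the first range, $0 < t < s \leq \gamma$. Both $t$ and $s$ lie in $(0,\gamma]$, where $g$ is strictly increasing; since $t < s$ we get $g(t) < g(s) = 0$, which is exactly $\xi_\gamma(t,s) < 1$. For the second range, $1 > t > s \geq \gamma$, both $t$ and $s$ lie in $[\gamma,1)$, where $g$ is strictly decreasing; since $t > s$ we again get $g(t) < g(s) = 0$. In either case the conclusion follows.

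The only mild subtlety — and the one point worth stating carefully rather than a genuine obstacle — is the monotonicity must be applied on the closed-ended intervals $(0,\gamma]$ and $[\gamma,1)$, so one should note that $g$ is continuous up to $t = \gamma$ and that its derivative has the stated sign on the open pieces, which gives strict monotonicity on the closures. When $\gamma \in \{0,1\}$ the factor with exponent $0$ drops out and $\xi_\gamma$ reduces to a single strictly monotone ratio, so the inequality is immediate and the hypotheses $s \le \gamma$ or $s \ge \gamma$ simply pin down which ratio is relevant. No second-order analysis or delicate estimation is needed; the whole argument is the sign of one rational derivative together with the fact that $\xi_\gamma(s,s) = 1$.
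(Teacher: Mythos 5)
Your proof is correct. The derivative computation $g'(t)=\gamma/t-(1-\gamma)/(1-t)=(\gamma-t)/(t(1-t))$ is right, the monotonicity of $g$ on $(0,\gamma]$ and $[\gamma,1)$ together with $g(s)=0$ gives exactly the two stated cases, and you handle the boundary values $\gamma\in\{0,1\}$ properly; the paper omits the proof of this lemma, but your argument is the natural one and is complete.
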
\vs

\begin{proposition}\label{2-proposition-phi_n-r-2}
	Let $\gamma_n \to \gamma \in [0,1]$, as $n \to \infty$ at the rate $\lvert \gamma_n-\gamma \rvert=O(1/n)$. Then, for $(t,s) \in (0,1) \times (0,1)$, we have
	\[
	\frac{\phi_{n,\gamma_n}'(t)}{\phi_{n,\gamma_n}'(s)} \leq C(t,s) \left\{\left(\frac{t}{s}\right)^{\gamma} \left(\frac{1-t}{1-s}\right)^{1-\gamma}\right\}^{n-1},
	\]
	where $C(t,s)$ is a positive constant, independent of $n$. \hfill $\blacksquare$
\end{proposition}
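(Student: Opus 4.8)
The plan is to start from the explicit formula \eqref{2-eq-n-gamma-phi-n-first-derivative} for $\phi_{n,\alpha}'$, applied with $\alpha=\gamma_n$. Writing $k_n=\left[(n-1)\gamma_n\right]$, the prefactor $n\binom{n-1}{k_n}$ is identical in the numerator and the denominator of $\phi_{n,\gamma_n}'(t)/\phi_{n,\gamma_n}'(s)$, so it cancels and we are left with
\[
\frac{\phi_{n,\gamma_n}'(t)}{\phi_{n,\gamma_n}'(s)}=\left(\frac{t}{s}\right)^{k_n}\left(\frac{1-t}{1-s}\right)^{n-1-k_n}.
\]
Comparing with the asserted bound, it then suffices to show that the ratio
\[
\left(\frac{t}{s}\right)^{k_n}\left(\frac{1-t}{1-s}\right)^{n-1-k_n}\Big/\left[\left(\frac{t}{s}\right)^{\gamma}\left(\frac{1-t}{1-s}\right)^{1-\gamma}\right]^{n-1}
\]
is bounded above by a quantity depending only on $(t,s)$.

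Next I would simplify this ratio by setting $\delta_n=k_n-(n-1)\gamma$. The exponent of $t/s$ becomes $k_n-(n-1)\gamma=\delta_n$, and since $(n-1-k_n)-(1-\gamma)(n-1)=(n-1)\gamma-k_n=-\delta_n$, the exponent of $(1-t)/(1-s)$ becomes $-\delta_n$. Hence the ratio equals $\left(t(1-s)/(s(1-t))\right)^{\delta_n}$. Writing $r=t(1-s)/(s(1-t))>0$, we see that as soon as we know $|\delta_n|\le M$ for all $n$, we get $r^{\delta_n}\le\max\{r^{M},r^{-M}\}=:C(t,s)$, a positive constant independent of $n$, and the proof is complete.

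It remains to bound $\delta_n$ uniformly in $n$. Writing $\left[(n-1)\gamma_n\right]=(n-1)\gamma_n-\{(n-1)\gamma_n\}$ with $0\le\{(n-1)\gamma_n\}<1$, we obtain
\[
\delta_n=(n-1)(\gamma_n-\gamma)-\{(n-1)\gamma_n\}.
\]
The rate hypothesis $\lvert\gamma_n-\gamma\rvert=O(1/n)$ gives $\sup_n(n-1)\lvert\gamma_n-\gamma\rvert<\infty$, and the fractional part lies in $[0,1)$; therefore $\{\,\lvert\delta_n\rvert:n\in\mathbb{N}\,\}$ is bounded, which is exactly what is needed.

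The argument is essentially a bookkeeping exercise; the only points requiring care are the verification that $(n-1-k_n)-(1-\gamma)(n-1)=-\delta_n$, so that the two exponents are negatives of each other, and the observation that the prescribed convergence rate $\lvert\gamma_n-\gamma\rvert=O(1/n)$ is precisely the strength needed to keep $\delta_n$ bounded — a mere $\lvert\gamma_n-\gamma\rvert\to0$ would not suffice, since then the factor $r^{\delta_n}$ could grow with $n$.
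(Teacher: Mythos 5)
Your proposal is correct and is essentially the paper's own argument: both start from \eqref{2-eq-n-gamma-phi-n-first-derivative}, cancel the common factor $n\binom{n-1}{[(n-1)\gamma_n]}$, and reduce the claim to bounding $\left(t(1-s)/(s(1-t))\right)^{\delta_n}$ with $\delta_n$ uniformly bounded via the $O(1/n)$ rate. The only cosmetic difference is that the paper keeps the fractional-part term $\{(n-1)\gamma_n\}$ and the deviation $(n-1)(\gamma_n-\gamma)$ as two separate bounded exponents, whereas you merge them into the single quantity $\delta_n$; the resulting constant $C(t,s)$ is the same in spirit.
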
\vs

The next corollary, which follows from \cref{2-lemma-xi} and \cref{2-proposition-phi_n-r-2}, gives an asymptotic property of the sequence of distortion functions $\left\{\phi_{n,\gamma_n} : n \in \mathbb{N}\right\}$.
\begin{corollary}
	\label{2-corollary-phi_n-r-2-limit}
	Let $\gamma_n \to \gamma \in [0,1]$, as $n \to \infty$ at the rate $\lvert \gamma_n-\gamma \rvert=O(1/n)$. Also assume that $0<t<s<\gamma$ or $1>t>s>\gamma$. Then
	\begin{equation*}
		\label{2-eq-phi_{n,gamma}-derivative-ratio-limit}
		\lim_{n \to \infty} \frac{\phi_{n,\gamma_n}'(t)}{\phi_{n,\gamma_n}'(s)}=0.
	\end{equation*}
\end{corollary}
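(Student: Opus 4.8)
The plan is to obtain the statement by chaining the two preceding results. First I would fix $(t,s)$ in one of the two admissible regimes and apply \cref{2-proposition-phi_n-r-2}, which---under the hypothesis $\gamma_n \to \gamma$ with $\lvert \gamma_n - \gamma \rvert = O(1/n)$---produces a constant $C(t,s)>0$, independent of $n$, such that
\[
0 \le \frac{\phi_{n,\gamma_n}'(t)}{\phi_{n,\gamma_n}'(s)} \le C(t,s)\,\bigl\{\xi_\gamma(t,s)\bigr\}^{n-1},
\]
where $\xi_\gamma(t,s) = (t/s)^{\gamma}\bigl((1-t)/(1-s)\bigr)^{1-\gamma}$ is precisely the base of the power appearing in \cref{2-proposition-phi_n-r-2}. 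The left-hand inequality is immediate from \eqref{2-eq-n-gamma-phi-n-first-derivative}, which exhibits $\phi_{n,\gamma_n}'$ as a product of strictly positive factors on $(0,1)$.

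Next I would pin down the base $\xi_\gamma(t,s)$. Being a product of positive real numbers, $\xi_\gamma(t,s)>0$ always. For the upper bound on it I would invoke \cref{2-lemma-xi}: the corollary's hypothesis $0<t<s<\gamma$ implies in particular $0<t<s\le\gamma$, and the hypothesis $1>t>s>\gamma$ implies $1>t>s\ge\gamma$, so in either case \cref{2-lemma-xi} gives $\xi_\gamma(t,s)<1$. Hence $0<\xi_\gamma(t,s)<1$, so $\bigl\{\xi_\gamma(t,s)\bigr\}^{n-1}\to 0$ geometrically as $n\to\infty$; since $C(t,s)$ does not depend on $n$, the right-hand side of the displayed bound tends to $0$. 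The squeeze theorem, with the constant lower bound $0$, then yields $\lim_{n\to\infty}\phi_{n,\gamma_n}'(t)/\phi_{n,\gamma_n}'(s)=0$.

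There is no real obstacle here; the substantive work is carried by \cref{2-proposition-phi_n-r-2} and \cref{2-lemma-xi}. The only points needing care are that the corollary's strict inequalities are weaker than, hence imply, the one-sided non-strict inequalities against $\gamma$ required by \cref{2-lemma-xi}, and that when $\gamma=0$ or $\gamma=1$ the hypothesis is vacuous (no pair $(t,s)$ satisfies $0<t<s<0$ or $1>t>s>1$), so nothing needs to be checked in those boundary cases.
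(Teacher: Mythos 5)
Your proposal is correct and is precisely the argument the paper intends: the corollary is stated as following from \cref{2-proposition-phi_n-r-2} (geometric bound with base $\xi_\gamma(t,s)$ and an $n$-independent constant) together with \cref{2-lemma-xi} (which gives $\xi_\gamma(t,s)<1$ under the stated strict inequalities). Your additional remarks on positivity of $\phi_{n,\gamma_n}'$ and the vacuity of the cases $\gamma=0,1$ are accurate but not needed beyond what the paper leaves implicit.
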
\vs

\begin{lemma}
\label{2-lemma-cauchy-normal-2.r}
	Let $n \in \mathbb{N}$ and $\alpha \in [0,1]$. Then, we have $F_{X_{1+\left[(n-1)\alpha\right]:n}}^{-1}(t)=F_X^{-1}(\phi_{n,\alpha}^{-1}(t))$, for every $t \in (0,1)$.
\end{lemma}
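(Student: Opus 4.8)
The plan is to compute the quantile function of the order statistic directly from its cdf, using the two facts that the cdf of $X_{1+[(n-1)\alpha]:n}$ is the composition $\phi_{n,\alpha}\circ F_X$ (stated in the text preceding \eqref{2-eq-phi-n-alpha}) and that $\phi_{n,\alpha}$ is a continuous, strictly increasing bijection of $[0,1]$ onto $[0,1]$ (\cref{2-proposition-phi_n-r}\ref{2-item-r-0-1}--\ref{2-item-r-increasing-in-t}), so that $\phi_{n,\alpha}^{-1}$ is a genuine two-sided inverse on $[0,1]$. Write $G = F_{X_{1+[(n-1)\alpha]:n}} = \phi_{n,\alpha}\circ F_X$. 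By the definition of the left-continuous inverse, $G^{-1}(t) = \inf\{x\in\mathbb{R} : G(x)\geq t\} = \inf\{x\in\mathbb{R} : \phi_{n,\alpha}(F_X(x))\geq t\}$ for $t\in(0,1)$.

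First I would observe that, since $\phi_{n,\alpha}$ is strictly increasing on $[0,1]$ with inverse $\phi_{n,\alpha}^{-1}$, the inequality $\phi_{n,\alpha}(F_X(x))\geq t$ is equivalent to $F_X(x)\geq \phi_{n,\alpha}^{-1}(t)$; here one uses that $F_X(x)\in[0,1]$ and $t\in(0,1)$ so that $\phi_{n,\alpha}^{-1}(t)\in(0,1)$ is well-defined, and that a strictly increasing bijection preserves and reflects $\geq$. Hence
\[
\{x\in\mathbb{R} : \phi_{n,\alpha}(F_X(x))\geq t\} = \{x\in\mathbb{R} : F_X(x)\geq \phi_{n,\alpha}^{-1}(t)\},
\]
and taking infima gives $G^{-1}(t) = \inf\{x\in\mathbb{R} : F_X(x)\geq \phi_{n,\alpha}^{-1}(t)\} = F_X^{-1}(\phi_{n,\alpha}^{-1}(t))$, which is exactly the claimed identity. (One may alternatively invoke the standard fact that for a nondecreasing right-continuous $h:[0,1]\to[0,1]$ that is a bijection, $(h\circ F)^{-1} = F^{-1}\circ h^{-1}$, but it is cleaner here to argue from the definition since all hypotheses are available.)

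The only point requiring a little care — and the one I would expect to be the main, though modest, obstacle — is the equivalence of the two defining inequalities at the boundary behaviour of $\phi_{n,\alpha}$: one must make sure that $\phi_{n,\alpha}(F_X(x)) \geq t \iff F_X(x) \geq \phi_{n,\alpha}^{-1}(t)$ holds for every real $x$, including those $x$ with $F_X(x)\in\{0,1\}$. This is handled by noting $\phi_{n,\alpha}(0)=0 < t$ and $\phi_{n,\alpha}(1)=1 \geq t$ for $t\in(0,1)$, so the endpoints cause no trouble, and strict monotonicity on the open interval handles the rest. Since $F_X$ is assumed strictly increasing (so $F_X^{-1}$ is the honest inverse) but strict monotonicity of $F_X$ is in fact not needed for this particular identity, I would state the lemma's proof so as to use only monotonicity and the bijectivity of $\phi_{n,\alpha}$, keeping the argument self-contained via \cref{2-proposition-phi_n-r}.
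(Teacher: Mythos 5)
Your argument is correct: the paper omits the proof of this lemma entirely (it is not among those given in the appendix), and your route — writing $F_{X_{1+[(n-1)\alpha]:n}}=\phi_{n,\alpha}\circ F_X$, using that $\phi_{n,\alpha}$ is a continuous strictly increasing bijection of $[0,1]$ onto $[0,1]$ to convert $\phi_{n,\alpha}(F_X(x))\geq t$ into $F_X(x)\geq \phi_{n,\alpha}^{-1}(t)$, and then taking infima in the definition of the left-continuous inverse — is exactly the standard computation the authors are relying on. Your boundary check at $F_X(x)\in\{0,1\}$ and the observation that strict increasingness of $F_X$ is not actually needed are both accurate.
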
\vs

\begin{lemma}
\label{2-lemma-A_0-A_0-n}
	Let us define
	\[
	A_{0,n}=\left\{v \in (0,1) : F_{X_{1+\left[(n-1)\gamma_n\right]:n}}^{-1}(v)>F_{Y_{1+\left[(n-1)\gamma_n\right]:n}}^{-1}(v)\right\}.
	\]
	Then $A_{0,n}=\phi_{n,\gamma_n}(A_0)$, where $\phi_{n,\gamma_n}(A_0)=\left\{\phi_{n,\gamma_n}(u) : u \in A_0\right\}$.
\end{lemma}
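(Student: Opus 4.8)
The plan is to peel off the two distortion functions one layer at a time, using \cref{2-lemma-cauchy-normal-2.r} to convert quantile functions of order statistics into compositions $F_X^{-1} \circ \phi_{n,\gamma_n}^{-1}$ and $F_Y^{-1} \circ \phi_{n,\gamma_n}^{-1}$, and then \cref{2-lemma-A_0-B_0} together with the monotonicity of $\phi_{n,\gamma_n}$ (\cref{2-proposition-phi_n-r}\,\ref{2-item-r-increasing-in-t}) to track how the ``$X$ beats $Y$'' region transforms. First I would fix $n$ and write, for $v \in (0,1)$, the chain of equivalences: $v \in A_{0,n}$ iff $F_{X_{1+[(n-1)\gamma_n]:n}}^{-1}(v) > F_{Y_{1+[(n-1)\gamma_n]:n}}^{-1}(v)$ iff $F_X^{-1}(\phi_{n,\gamma_n}^{-1}(v)) > F_Y^{-1}(\phi_{n,\gamma_n}^{-1}(v))$ by \cref{2-lemma-cauchy-normal-2.r}. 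Setting $u = \phi_{n,\gamma_n}^{-1}(v)$, the right-hand condition says exactly $u \in A_0$. So $v \in A_{0,n}$ iff $\phi_{n,\gamma_n}^{-1}(v) \in A_0$ iff $v \in \phi_{n,\gamma_n}(A_0)$, the last step using that $\phi_{n,\gamma_n}$ restricted to $(0,1)$ is a strictly increasing bijection onto its image (part \ref{2-item-r-increasing-in-t} of \cref{2-proposition-phi_n-r}), so that $\phi_{n,\gamma_n}^{-1}$ is a genuine inverse and $\phi_{n,\gamma_n}^{-1}(v) \in A_0 \iff v \in \phi_{n,\gamma_n}(A_0)$.

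The one genuine subtlety to nail down is the behaviour at the endpoints and the exact range of $\phi_{n,\gamma_n}$. Since $\phi_{n,\gamma_n}$ is a distortion function that is continuous on $[0,1]$, strictly increasing on $(0,1)$, with $\phi_{n,\gamma_n}(0)=0$ and $\phi_{n,\gamma_n}(1)=1$, it maps $(0,1)$ bijectively onto $(0,1)$; hence $\phi_{n,\gamma_n}^{-1}$ (the left-continuous inverse used throughout the paper, which here is the ordinary inverse) maps $(0,1)$ onto $(0,1)$, and the substitution $u = \phi_{n,\gamma_n}^{-1}(v)$ is a bijection of $(0,1)$ with itself. That guarantees no boundary points are spuriously gained or lost when passing between the $v$-description and the $u$-description of $A_{0,n}$. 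I would state this bijectivity explicitly as the first line of the proof and then run the equivalence chain above.

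I expect the argument to be short and essentially bookkeeping; the only place where care is needed — and what I'd flag as the ``main obstacle'' — is making sure the use of \cref{2-lemma-cauchy-normal-2.r} is legitimate for \emph{both} samples simultaneously, i.e. that $F_{Y_{1+[(n-1)\gamma_n]:n}}^{-1}(v) = F_Y^{-1}(\phi_{n,\gamma_n}^{-1}(v))$ also holds (the lemma is stated for the $X$-sample, but it applies verbatim to $Y$ since the only hypotheses are $n \in \mathbb{N}$, $\alpha \in [0,1]$, together with the standing assumption that $F_Y$ is continuous and strictly increasing), and that the strict inequality is preserved under the substitution — which it is, because $\phi_{n,\gamma_n}^{-1}$ is a strictly increasing bijection, so it neither creates nor destroys strict inequalities between $F_X^{-1}$ and $F_Y^{-1}$. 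Once those two points are observed, the equality $A_{0,n} = \phi_{n,\gamma_n}(A_0)$ drops out immediately.
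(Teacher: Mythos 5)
Your proposal is correct, and it follows the route the paper evidently intends (the paper omits this proof, but the ingredients you use — \cref{2-lemma-cauchy-normal-2.r} applied to both samples, and the fact that the continuous, strictly increasing distortion function $\phi_{n,\gamma_n}$ is a bijection of $(0,1)$ onto itself so that $\phi_{n,\gamma_n}^{-1}(v)\in A_0 \iff v\in\phi_{n,\gamma_n}(A_0)$ — are exactly the ones it cites). The two points you flag as needing care are indeed the only ones, and you handle both correctly.
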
\vs

Now we are in a position to state and prove the first main result of this section. Here, we consider random samples from two different distributions $F_X$ and $F_Y$, and derive sufficient conditions for asymptotic stochastic order for extreme and central order statistics from the said random samples.
\begin{theorem}\label{2-theorem-n-gamma-rate}
	Let $X_1,X_2,\ldots,X_n$ be a random sample from a distribution $F_X$ and let $Y_1,Y_2,\ldots,Y_n$ be that from a distribution $F_Y$. Assume that $F_X$ and $F_Y$ are continuous, strictly increasing and have finite second order moments. Let us define, for each $\gamma \in [0,1]$,
	\begin{align*}
	&c_{\gamma}=\sup{\left\{B_0 \medcap (-\infty,F_X^{-1}(\gamma))\right\}},\\
	&d_{\gamma}=\inf{\left\{B_0 \medcap (F_X^{-1}(\gamma),\infty)\right\}},\\
	&a_{\gamma}=\sup{\left\{B_2 \medcap (-\infty,F_X^{-1}(\gamma))\right\}},\\
	&b_{\gamma}=\inf{\left\{B_2 \medcap (F_X^{-1}(\gamma),\infty)\right\}}.
	\end{align*}
	Assume that $c_{\gamma}<a_{\gamma}$, whenever $B_0 \medcap (-\infty,F_X^{-1}(\gamma)) \neq \emptyset$ and $b_{\gamma}<d_{\gamma}$, whenever $B_0 \medcap (F_X^{-1}(\gamma),\infty) \neq \emptyset$.\footnote{In particular, the conditions $c_{\gamma}<a_{\gamma}$ and $b_{\gamma}<d_{\gamma}$ imply that $\gamma \notin A_0 \cup \partial A_0$.} Suppose that $\left\{\gamma_n : n \in \mathbb{N}\right\}$ is a $[0,1]$-valued sequence that converges to $\gamma \in [0,1]$, at the rate $\lvert \gamma_n-\gamma \rvert=O(1/n)$. Then, for any given $\epsilon>0$,
	\begin{equation}\label{2-eq-n-gamma-rate}
	\varepsilon_{\mathcal{W}_2}(F_{X_{1+[(n-1)\gamma_n]:n}},F_{Y_{1+[(n-1)\gamma_n]:n}}) \leq C_{\epsilon,\gamma} z_{\epsilon,\gamma}^{n-1},
	\end{equation}
	where $C_{\epsilon,\gamma}$ is a nonnegative constant which is independent of $n$ and
	\[
	z_{\epsilon,\gamma}=\begin{cases*}
	\frac{1-F_X(d_{\gamma})}{1-F_X(b_{\gamma})-\epsilon}, &\textnormal{ if } $\gamma=0$,\\
	\max{\left\{\left(\frac{F_X(c_{\gamma})}{F_X(a_{\gamma})-\epsilon}\right)^{\gamma} \left(\frac{1-F_X(c_{\gamma})}{1-F_X(a_{\gamma})+\epsilon}\right)^{1-\gamma},\left(\frac{F_X(d_{\gamma})}{F_X(b_{\gamma})+\epsilon}\right)^{\gamma} \left(\frac{1-F_X(d_{\gamma})}{1-F_X(b_{\gamma})-\epsilon}\right)^{1-\gamma}\right\}}, &\textnormal{ if } $0<\gamma<1$,\\
	\frac{F_X(c_{\gamma})}{F_X(a_{\gamma})-\epsilon}, &\textnormal{ if } $\gamma=1$.
	\end{cases*}
	\]
	Furthermore, $X_{1+[(n-1)\gamma_n]:n} \leq_{\textnormal{d-ast}} Y_{1+[(n-1)\gamma_n]:n}$, as $n \to \infty$.
\end{theorem}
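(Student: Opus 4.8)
The argument reduces everything to the behaviour of the weight $\phi_{n,\gamma_n}'$, which by \cref{2-proposition-phi_n-r} is continuous on $[0,1]$ and unimodal with mode $m_n:=[(n-1)\gamma_n]/(n-1)\to\gamma$, together with the ratio estimate of \cref{2-proposition-phi_n-r-2}. Write $X'=X_{1+[(n-1)\gamma_n]:n}$, $Y'=Y_{1+[(n-1)\gamma_n]:n}$ and $g(u)=\lvert F_X^{-1}(u)-F_Y^{-1}(u)\rvert$. By \cref{2-lemma-cauchy-normal-2.r} the quantile functions of $X'$ and $Y'$ are $F_X^{-1}\circ\phi_{n,\gamma_n}^{-1}$ and $F_Y^{-1}\circ\phi_{n,\gamma_n}^{-1}$, and by \cref{2-lemma-A_0-A_0-n} the set $A_{0,n}$ equals $\phi_{n,\gamma_n}(A_0)$. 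Since $\phi_{n,\gamma_n}$ is a continuous strictly increasing bijection of $(0,1)$ onto itself, differentiable on $(0,1)$, the substitution $v=\phi_{n,\gamma_n}(u)$ turns the measure of departure into
\[
\varepsilon_{\mathcal{W}_2}(F_{X'},F_{Y'})=\frac{\int_{A_0}g(u)^2\,\phi_{n,\gamma_n}'(u)\,du}{\int_{A_2}g(u)^2\,\phi_{n,\gamma_n}'(u)\,du},
\]
the integrands vanishing off $A_2$. As $\phi_{n,\gamma_n}'$ is bounded on $[0,1]$ and $\int_0^1 g^2\leq 2(\mathbb{E}X^2+\mathbb{E}Y^2)<\infty$, the denominator is finite; since $\phi_{n,\gamma_n}'>0$ on $(0,1)$ it is strictly positive whenever $A_2$ is non-null, so no indeterminate form arises. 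If $A_0$ or $A_2$ is null the conventions for $\varepsilon_{\mathcal{W}_2}$ make \eqref{2-eq-n-gamma-rate} trivial, so from now on $A_0$ is assumed non-null.

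Next I localise the numerator and build comparison windows. Transporting through $F_X$ by \cref{2-lemma-A_0-B_0}, and using that (as recorded in the footnote to the statement) the hypotheses force $\gamma\notin A_0\cup\partial A_0$, one gets $A_0\cap(0,\gamma)\subseteq(0,F_X(c_\gamma)]$, $A_0\cap(\gamma,1)\subseteq[F_X(d_\gamma),1)$, and the chain $F_X(c_\gamma)<F_X(a_\gamma)\leq\gamma\leq F_X(b_\gamma)<F_X(d_\gamma)$ on the non-degenerate sides. Because $B_2$ is open and $a_\gamma$, $b_\gamma$ are the supremum and infimum of $B_2$ on the two sides of $F_X^{-1}(\gamma)$, I can pick non-empty open intervals $I\subseteq B_2\cap(-\infty,F_X^{-1}(\gamma))$ and $I'\subseteq B_2\cap(F_X^{-1}(\gamma),\infty)$ whose $F_X$-images $E=F_X(I)=(e_1,e_2)$ and $E'=F_X(I')=(e_1',e_2')$ satisfy $E\subseteq(F_X(a_\gamma)-\epsilon,\gamma)$ and $E'\subseteq(\gamma,F_X(b_\gamma)+\epsilon)$; by \cref{2-lemma-A_0-B_0} these lie in $A_2$, have positive measure, and are disjoint. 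For $n$ so large that $m_n$ lies strictly between $F_X(c_\gamma)$ and $\gamma$ and strictly between $\gamma$ and $F_X(d_\gamma)$: monotonicity of $\phi_{n,\gamma_n}'$ on $(0,m_n)$ gives $\phi_{n,\gamma_n}'(u)\leq\phi_{n,\gamma_n}'(F_X(c_\gamma))$ for $u\in A_0\cap(0,\gamma)$; unimodality gives $\phi_{n,\gamma_n}'(u)\geq\min\{\phi_{n,\gamma_n}'(e_1),\phi_{n,\gamma_n}'(e_2)\}$ for $u\in E$; and the mirror statements hold on the decreasing branch with $F_X(d_\gamma)$ and $E'$. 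Hence, applying \cref{2-proposition-phi_n-r-2},
\[
\frac{\int_{A_0\cap(0,\gamma)}g^2\phi_{n,\gamma_n}'}{\int_E g^2\phi_{n,\gamma_n}'}\leq\frac{\int_{A_0\cap(0,\gamma)}g^2}{\int_E g^2}\,\max_{i=1,2}C(F_X(c_\gamma),e_i)\,\xi_\gamma(F_X(c_\gamma),e_i)^{n-1},
\]
and since $u\mapsto\xi_\gamma(F_X(c_\gamma),u)$ is decreasing on $(0,\gamma)$ while $e_i>F_X(a_\gamma)-\epsilon$, each $\xi_\gamma(F_X(c_\gamma),e_i)$ is at most $\xi_\gamma(F_X(c_\gamma),F_X(a_\gamma)-\epsilon)$, the first entry of the $\max$ defining $z_{\epsilon,\gamma}$. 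The symmetric computation with $F_X(d_\gamma)$, $E'$ (using that $u\mapsto\xi_\gamma(F_X(d_\gamma),u)$ is increasing on $(\gamma,1)$ and $e_i'<F_X(b_\gamma)+\epsilon$) produces the second entry. When $\gamma=0$ only the right-hand estimate survives and when $\gamma=1$ only the left-hand one, matching the piecewise definition of $z_{\epsilon,\gamma}$, and the degenerate sides (empty $B_0$- or $B_2$-pieces, $F_X(c_\gamma)=0$, $F_X(d_\gamma)=1$) contribute nothing on that side, consistently with $z_{\epsilon,\gamma}$.

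To conclude, put $a=\int_{A_0}g^2\phi_{n,\gamma_n}'$, $d=\int_{A_2}g^2\phi_{n,\gamma_n}'$, and let $z_1,z_2$ be the two entries of the $\max$. The two displays give $a\leq K_1 z_1^{n-1}\int_E g^2\phi_{n,\gamma_n}'+K_2 z_2^{n-1}\int_{E'}g^2\phi_{n,\gamma_n}'$ with $K_1,K_2$ finite and independent of $n$; since $E,E'\subseteq A_2$ are disjoint, $\int_E g^2\phi_{n,\gamma_n}'+\int_{E'}g^2\phi_{n,\gamma_n}'\leq d$, so $\varepsilon_{\mathcal{W}_2}(F_{X'},F_{Y'})=a/d\leq\max\{K_1,K_2\}\,z_{\epsilon,\gamma}^{n-1}$ for all large $n$, and enlarging the constant to absorb the remaining finitely many $n$ (where $\varepsilon_{\mathcal{W}_2}\leq1$) yields \eqref{2-eq-n-gamma-rate}. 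Finally, $c_\gamma<a_\gamma$ and $b_\gamma<d_\gamma$ force $F_X(c_\gamma)<F_X(a_\gamma)\leq\gamma$ and $\gamma\leq F_X(b_\gamma)<F_X(d_\gamma)$, so \cref{2-lemma-xi} gives $\xi_\gamma(F_X(c_\gamma),F_X(a_\gamma))<1$ and $\xi_\gamma(F_X(d_\gamma),F_X(b_\gamma))<1$; for $\epsilon$ small enough $z_{\epsilon,\gamma}<1$, the right side of \eqref{2-eq-n-gamma-rate} tends to $0$ as $n\to\infty$, and therefore $X_{1+[(n-1)\gamma_n]:n}\leq_{\textnormal{d-ast}}Y_{1+[(n-1)\gamma_n]:n}$ as $n\to\infty$.

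The main obstacle is the construction and control of the fixed, $n$-independent windows $E,E'\subseteq A_2$: they must simultaneously sit inside the prescribed $\epsilon$-neighbourhoods of $F_X(a_\gamma)$ and $F_X(b_\gamma)$ (so the $\xi_\gamma$-bases emerge exactly as in $z_{\epsilon,\gamma}$) and support a usable lower bound on $\phi_{n,\gamma_n}'$, which is why one needs the full unimodality of $\phi_{n,\gamma_n}'$ from \cref{2-proposition-phi_n-r} rather than mere monotonicity, since the mode $m_n$ may fall inside a window. A secondary, bookkeeping-heavy point is keeping the numerous boundary/degenerate configurations ($\gamma\in\{0,1\}$, one-sided emptiness of $B_0$ or $B_2$, $F_X(c_\gamma)=0$ or $F_X(d_\gamma)=1$) aligned with the three cases in the definition of $z_{\epsilon,\gamma}$.
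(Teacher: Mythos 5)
Your proof is correct and follows essentially the same route as the paper's: localize $A_0$ outside $(F_X(c_{\gamma}),F_X(d_{\gamma}))$ via \cref{2-lemma-A_0-B_0}, change variables through $\phi_{n,\gamma_n}$ using \cref{2-lemma-cauchy-normal-2.r} and \cref{2-lemma-A_0-A_0-n}, extract the ratio of $\phi_{n,\gamma_n}'$ values by unimodality, and finish with \cref{2-proposition-phi_n-r-2} and \cref{2-lemma-xi}. The only deviation is cosmetic: you lower-bound the denominator over windows chosen inside $A_2$ and then invoke monotonicity of $s\mapsto\xi_{\gamma}(t,s)$ to land on the bases appearing in $z_{\epsilon,\gamma}$, whereas the paper integrates over the fixed intervals $[F_X(a_{\gamma})-\epsilon,\gamma-\epsilon/2]$ and $[\gamma+\epsilon/2,F_X(b_{\gamma})+\epsilon]$ and applies \cref{2-proposition-phi_n-r-2} directly at $s=F_X(a_{\gamma})-\epsilon$ and $s=F_X(b_{\gamma})+\epsilon$; your variant settles the positivity of the comparison integral slightly more cleanly.
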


\begin{proof}
	If $B_0 \medcap (-\infty,F_X^{-1}(\gamma))=\emptyset$, then $c_{\gamma}=\sup{\left\{B_0 \medcap (-\infty,F_X^{-1}(\gamma))\right\}}=-\infty$, and hence $F_X(c_{\gamma})=0$. Otherwise, we have
	\begin{align}
	\label{2-eq-theorem-F_X-F_Y-A_0-1-r-left-F}
	F_X(c_{\gamma})&=F_X(\sup{\left\{B_0 \medcap (-\infty,F_X^{-1}(\gamma))\right\}})\nonumber\\
	&=\sup{F_X(\left\{B_0 \medcap (-\infty,F_X^{-1}(\gamma))\right\})}\nonumber\\
	&=\sup{\left\{F_X(B_0) \medcap F_X((-\infty),F_X^{-1}(\gamma))\right\}}\nonumber\\
	&=\sup{\left\{A_0 \medcap (0,\gamma)\right\}},
	\end{align}
	where the second equality holds since $F_X$ is continuous and nondecreasing in $B_0 \medcap (-\infty,F_X^{-1}(\gamma))$, the third inequality follows from the fact that $F_X$ is strictly increasing and hence injective, the last equality follows from \cref{2-lemma-A_0-B_0}. Note that $B_0 \medcap (-\infty,F_X^{-1}(\gamma))=\emptyset$ if and only if $A_0 \medcap (0,\gamma)=\emptyset$. Hence, in the case where $A_0 \medcap (0,\gamma) \neq \emptyset$, we have
	\begin{equation}
	\label{2-eq-theorem-F_X-F_Y-A_0-1-r-left}
	A_0 \medcap (0,\gamma) \subseteq (0,F_X(c_{\gamma})).
	\end{equation}
	Note that \eqref{2-eq-theorem-F_X-F_Y-A_0-1-r-left} holds even when $A_0 \medcap (0,\gamma)=\emptyset$, since the empty set is a subset of itself. On the other hand, if $B_0 \medcap (F_X^{-1}(\gamma),\infty)=\emptyset$, then we have $d_{\gamma}=\inf{\left\{B_0 \medcap (F_X^{-1}(\gamma),\infty)\right\}}=\infty$, and hence $F_X(d_{\gamma})=1$. Otherwise, in the same line of argument as in deriving \eqref{2-eq-theorem-F_X-F_Y-A_0-1-r-left-F}, we have
	\begin{equation}\label{2-eq-theorem-F_X-F_Y-A_0-1-r-right-F}
	F_X(d_{\gamma})=\inf{\left\{A_0 \medcap (\gamma,1)\right\}}
	\end{equation}
	and hence
	\begin{equation}\label{2-eq-theorem-F_X-F_Y-A_0-1-r-right}
	A_0 \medcap (\gamma,1) \subseteq (F_X(d_{\gamma}),1).
	\end{equation}
	Note that \eqref{2-eq-theorem-F_X-F_Y-A_0-1-r-right} holds even when $A_0 \medcap (\gamma,1)=\emptyset$. It follows from the assumption $c_{\gamma}<a_{\gamma} \leq b_{\gamma}<d_{\gamma}$ that $c_{\gamma}<F_X^{-1}(\gamma)<d_{\gamma}$. By strict increasingness of $F_X$, $F_X(c_{\gamma})<\gamma<F_X(d_{\gamma})$. By \eqref{2-eq-theorem-F_X-F_Y-A_0-1-r-left-F} and \eqref{2-eq-theorem-F_X-F_Y-A_0-1-r-right-F}, we have $\sup{\left\{A_0 \medcap (0,\gamma)\right\}}<\gamma<\inf{\left\{A_0 \medcap (\gamma,1)\right\}}$. Thus, $\gamma \notin A_0$. Note that
	\begin{align*}
		&\gamma=0 \Rightarrow A_0=A_0 \medcap (\gamma,1),\\
		&0<\gamma<1 \Rightarrow A_0=(A_0 \medcap (0,\gamma)) \cup (A_0 \medcap (\gamma,1)),\\
		&\gamma=1 \Rightarrow A_0=A_0 \medcap (0,\gamma).
	\end{align*}
	By \eqref{2-eq-theorem-F_X-F_Y-A_0-1-r-left} and \eqref{2-eq-theorem-F_X-F_Y-A_0-1-r-right}, we obtain
	\begin{equation}
	\label{2-eq-gamma-A_0}
	A_0 \subseteq (0,F_X(c_{\gamma})) \cup (F_X(d_{\gamma}),1),
	\end{equation}
	for every $\gamma \in [0,1]$. Let $A_{0,n}$ be as defined in \cref{2-lemma-A_0-A_0-n} and $\phi_{n,\gamma_n}$ be as defined in \eqref{2-eq-phi-n-alpha}. Then, by \cref{2-lemma-A_0-A_0-n},
	\begin{equation}
	\label{2-eq-gamma-A_{0,n}}
	A_{0,n} \subseteq (0,\phi_{n,\gamma_n}(F_X(c_{\gamma}))) \cup (\phi_{n,\gamma_n}(F_X(d_{\gamma})),1),
	\end{equation}
	for every $\gamma \in [0,1]$. Let $\epsilon \in (0,\min{\left\{F_X(a_{\gamma})-F_X(c_{\gamma}),F_X(d_{\gamma})-F_X(b_{\gamma})\right\}})$ be arbitrarily small. Note that
	\[
	\left\vert\frac{\left[(n-1)\gamma_n\right]}{n-1}-\gamma\right\vert=\left\vert \gamma_n-\frac{\left\{(n-1)\gamma_n\right\}}{n-1}-\gamma \right\vert < \left\vert\gamma_n-\gamma\right\vert+\frac{1}{n-1}.
	\]
	Thus, we can choose $N$ large enough so that $\left\vert\frac{\left[(n-1)\gamma_n\right]}{n-1}-\gamma\right\vert<\frac{\epsilon}{2}$, for every $n \geq N$. Now,
	\begin{equation}
	\label{2-eq-gamma-varepsilon-A_{0,n}}
	\varepsilon_{\mathcal{W}_2}(F_{X_{1+\left[(n-1)\gamma_n\right]:n}},F_{Y_{1+\left[(n-1)\gamma_n\right]:n}})=\frac{\int_{A_{0,n}} (F_{X_{1+\left[(n-1)\gamma_n\right]:n}}^{-1}(t)-F_{Y_{1+\left[(n-1)\gamma_n\right]:n}}^{-1}(t))^2 dt}{\int_0^1 (F_{X_{1+\left[(n-1)\gamma_n\right]:n}}^{-1}(t)-F_{Y_{1+\left[(n-1)\gamma_n\right]:n}}^{-1}(t))^2 dt}.
	\end{equation}
	\ni{\bf Case 1: $0<\gamma<1$}. Assume that $\epsilon<\min{\left\{2\gamma,2(1-\gamma)\right\}}$. Using \eqref{2-eq-gamma-A_{0,n}} and \cref{2-lemma-cauchy-normal-2.r}, we see that the right hand side of \eqref{2-eq-gamma-varepsilon-A_{0,n}} is bounded above by
	\[
	\frac{\int_0^{\phi_{n,\gamma_n}(F_X(c_{\gamma}))} (F_X^{-1}(\phi_{n,\gamma_n}^{-1}(t))-F_Y^{-1}(\phi_{n,\gamma_n}^{-1}(t)))^2 dt}{\int_0^1 (F_X^{-1}(\phi_{n,\gamma_n}^{-1}(t))-F_Y^{-1}(\phi_{n,\gamma_n}^{-1}(t)))^2 dt}+\frac{\int_{\phi_{n,\gamma_n}(F_X(d_{\gamma}))}^1 (F_X^{-1}(\phi_{n,\gamma_n}^{-1}(t))-F_Y^{-1}(\phi_{n,\gamma_n}^{-1}(t)))^2 dt}{\int_0^1 (F_X^{-1}(\phi_{n,\gamma_n}^{-1}(t))-F_Y^{-1}(\phi_{n,\gamma_n}^{-1}(t)))^2 dt}.
	\]
	Then, by taking the transformation $t=\phi_{n,\gamma_n}(u)$, we have
	\begin{align*}
	&\phantom{\,\,\,\,\,\,\,\,}\varepsilon_{\mathcal{W}_2}(F_{X_{1+\left[(n-1)\gamma_n\right]:n}},F_{Y_{1+\left[(n-1)\gamma_n\right]:n}})\\
	&\leq \frac{\int_0^{F_X(c_{\gamma})} (F_X^{-1}(u)-F_Y^{-1}(u))^2 \phi_{n,\gamma_n}'(u) du}{\int_0^1 (F_X^{-1}(u)-F_Y^{-1}(u))^2 \phi_{n,\gamma_n}'(u) du}+\frac{\int_{F_X(d_{\gamma})}^1 (F_X^{-1}(u)-F_Y^{-1}(u))^2 \phi_{n,\gamma_n}'(u) du}{\int_0^1 (F_X^{-1}(u)-F_Y^{-1}(u))^2 \phi_{n,\gamma_n}'(u) du}\\
	&\leq \frac{\int_0^{F_X(c_{\gamma})} (F_X^{-1}(u)-F_Y^{-1}(u))^2 \phi_{n,\gamma_n}'(u) du}{\int_{F_X(a_{\gamma})-\epsilon}^{\frac{\left[(n-1)\gamma_n\right]}{n-1}} (F_X^{-1}(u)-F_Y^{-1}(u))^2 \phi_{n,\gamma_n}'(u) du}+\frac{\int_{F_X(d_{\gamma})}^1 (F_X^{-1}(u)-F_Y^{-1}(u))^2 \phi_{n,\gamma_n}'(u) du}{\int_{\frac{\left[(n-1)\gamma_n\right]}{n-1}}^{F_X(b_{\gamma})+\epsilon} (F_X^{-1}(u)-F_Y^{-1}(u))^2 \phi_{n,\gamma_n}'(u) du}\\
	&\leq \frac{\int_0^{F_X(c_{\gamma})} (F_X^{-1}(u)-F_Y^{-1}(u))^2 \phi_{n,\gamma_n}'(u) du}{\int_{F_X(a_{\gamma})-\epsilon}^{\gamma-\frac{\epsilon}{2}} (F_X^{-1}(u)-F_Y^{-1}(u))^2 \phi_{n,\gamma_n}'(u) du}+\frac{\int_{F_X(d_{\gamma})}^1 (F_X^{-1}(u)-F_Y^{-1}(u))^2 \phi_{n,\gamma_n}'(u) du}{\int_{\gamma+\frac{\epsilon}{2}}^{F_X(b_{\gamma})+\epsilon} (F_X^{-1}(u)-F_Y^{-1}(u))^2 \phi_{n,\gamma_n}'(u) du}.
	\end{align*}
	Since $F_X(a_{\gamma})-\epsilon<\gamma$ and $\left[(n-1)\gamma_n\right]/(n-1) \to \gamma$, as $n \to \infty$, we have, for large enough $n$, $F_X(a_{\gamma})-\epsilon<\left[(n-1)\gamma_n\right]/(n-1)$ and consequently $F_X(c_{\gamma})<\left[(n-1)\gamma_n\right]/(n-1)$. Similarly, $F_X(d_{\gamma})>F_X(b_{\gamma})+\epsilon>\left[(n-1)\gamma_n\right]/(n-1)$, for large enough $n$. Now, from \cref{2-proposition-phi_n-r} \ref{2-item-r-increasing-decreasing-derivative}, we obtain
	\begin{equation}\label{2-eq-varepsilon-estimate-n-gamma-rate}
	\varepsilon_{\mathcal{W}_2}(F_{X_{1+\left[(n-1)\gamma_n\right]:n}},F_{Y_{1+\left[(n-1)\gamma_n\right]:n}}) \leq \frac{k_{11}\phi_{n,\gamma_n}'(F_X(c_{\gamma}))}{k_{21}\phi_{n,\gamma_n}'(F_X(a_{\gamma})-\epsilon)}+\frac{k_{12}\phi_{n,\gamma_n}'(F_X(d_{\gamma}))}{k_{22}\phi_{n,\gamma_n}'(F_X(b_{\gamma})+\epsilon)},
	\end{equation}
	where
	\begin{align*}
	&k_{11}=\int_0^{F_X(c_{\gamma})} (F_X^{-1}(u)-F_Y^{-1}(u))^2 du \geq 0,\\
	&k_{12}=\int_{F_X(d_{\gamma})}^1 (F_X^{-1}(u)-F_Y^{-1}(u))^2 du \geq 0,\\
	&k_{21}=\int_{F_X(a_{\gamma})-\epsilon}^{\gamma-\frac{\epsilon}{2}} (F_X^{-1}(u)-F_Y^{-1}(u))^2 du>0,\\
	&k_{22}=\int_{\gamma+\frac{\epsilon}{2}}^{F_X(b_{\gamma})+\epsilon} (F_X^{-1}(u)-F_Y^{-1}(u))^2 du>0.
	\end{align*}
	The strict positivity of $k_{21}$ and $k_{22}$ follows from the observations that one can choose $\epsilon$ $(>0)$ small enough such that $\lvert F_X^{-1}(u)-F_Y^{-1}(u) \rvert>0$ in the regions $(F_X(a_{\gamma})-\epsilon,F_X(a_{\gamma})-\epsilon/2)$ and $(F_X(b_{\gamma})+\epsilon/2,F_X(b_{\gamma})+\epsilon)$, due to continuity of $F_X^{-1}$ and $F_Y^{-1}$ (resulting from strict increasingness of $F_X$ and $F_Y$). Now, if $F_X(c_{\gamma})=0$, then $\phi_{n,\gamma_n}'(F_X(c_{\gamma}))/\phi_{n,\gamma_n}'(F_X(a_{\gamma})-\epsilon)=0$. On the other hand, if $F_X(c_{\gamma})>0$, then choosing $t=F_X(c_{\gamma})$ and $s=F_X(a_{\gamma})-\epsilon$ in \cref{2-proposition-phi_n-r-2}, we have
	\begin{equation}
	\label{2-eq-ratio-estimate-c}
	\frac{\phi_{n,\gamma_n}'(F_X(c_{\gamma}))}{\phi_{n,\gamma_n}'(F_X(a_{\gamma})-\epsilon)} \leq c_1 \left\{\left(\frac{F_X(c_{\gamma})}{F_X(a_{\gamma})-\epsilon}\right)^{\gamma} \left(\frac{1-F_X(c_{\gamma})}{1-F_X(a_{\gamma})+\epsilon}\right)^{1-\gamma}\right\}^{n-1}.
	\end{equation}
	where
	\[
	c_1=\left\{\frac{(F_X(a_{\gamma})-\epsilon)(1-F_X(c_{\gamma}))}{F_X(c_{\gamma})(1-F_X(a_{\gamma})+\epsilon)}\right\}^{K+1}.
	\]
	Again, if $F_X(d_{\gamma})=1$, then $\phi_{n,\gamma_n}'(F_X(d_{\gamma}))/\phi_{n,\gamma_n}'(F_X(b_{\gamma})+\epsilon)=0$. On the other hand, if $F_X(d_{\gamma})<1$, then choosing $t=F_X(d_{\gamma})$ and $s=F_X(b_{\gamma})+\epsilon$ in \cref{2-proposition-phi_n-r-2}, we have
	\begin{equation}
	\label{2-eq-ratio-estimate-d}
	\frac{\phi_{n,\gamma_n}'(F_X(d_{\gamma}))}{\phi_{n,\gamma_n}'(F_X(b_{\gamma})+\epsilon)} \leq c_2\left\{\left(\frac{F_X(d_{\gamma})}{F_X(b_{\gamma})+\epsilon}\right)^{\gamma} \left(\frac{1-F_X(d_{\gamma})}{1-F_X(b_{\gamma})-\epsilon}\right)^{1-\gamma}\right\}^{n-1},
	\end{equation}
	where
	\[
	c_2=\left\{\frac{F_X(d_{\gamma})(1-F_X(b_{\gamma})-\epsilon)}{(F_X(b_{\gamma})+\epsilon)(1-F_X(d_{\gamma}))}\right\}^K.
	\]
	Putting \eqref{2-eq-ratio-estimate-d} and \eqref{2-eq-ratio-estimate-c} in \eqref{2-eq-varepsilon-estimate-n-gamma-rate}, we have
	\begin{align}
	\label{2-eq-n-gamma-rate-proof}
	\varepsilon_{\mathcal{W}_2}(F_{X_{1+\left[(n-1)\gamma_n\right]:n}},F_{Y_{1+\left[(n-1)\gamma_n\right]:n}}) &\leq \frac{c_1\,k_{11}}{k_{21}} \left\{\left(\frac{F_X(c_{\gamma})}{F_X(a_{\gamma})-\epsilon}\right)^{\gamma} \left(\frac{1-F_X(c_{\gamma})}{1-F_X(a_{\gamma})+\epsilon}\right)^{1-\gamma}\right\}^{n-1}\nonumber\\
	&\phantom{\leq}\,\,+\frac{c_2\,k_{12}}{k_{22}} \left\{\left(\frac{F_X(d_{\gamma})}{F_X(b_{\gamma})+\epsilon}\right)^{\gamma} \left(\frac{1-F_X(d_{\gamma})}{1-F_X(b_{\gamma})-\epsilon}\right)^{1-\gamma}\right\}^{n-1}\nonumber\\
	&\leq C_{\epsilon,\gamma} z_{\epsilon,\gamma}^{n-1},
	\end{align}
	where $C_{\epsilon,\gamma}=\frac{c_1\,k_{11}}{k_{21}}+\frac{c_2\,k_{12}}{k_{22}}$ and $z_{\epsilon,\gamma}$ is as defined in the statement of the result. Since $c_1$, $c_2$, $k_{11}$, $k_{12}$, $k_{21}$, $k_{22}$ are all nonnegative real numbers, which do not depend on $n$, we see that $C_{\epsilon,\gamma}$ is a nonnegative real number, which is independent of $n$. Observe that, if $a \geq 0$ and $b \geq 0$, then $z=\max{\left\{a,b\right\}} \Rightarrow z^{n-1}=\max{\left\{a^{n-1},b^{n-1}\right\}}$. Now choosing $t=F_X(c_{\gamma})$ and $s=F_X(a_{\gamma})-\epsilon$ in \cref{2-lemma-xi}, we have
	\[
	\left(\frac{F_X(c_{\gamma})}{F_X(a_{\gamma})-\epsilon}\right)^{\gamma} \left(\frac{1-F_X(c_{\gamma})}{1-F_X(a_{\gamma})+\epsilon}\right)^{1-\gamma}<1.
	\]
	Again, choosing $t=F_X(d_{\gamma})$ and $s=F_X(b_{\gamma})+\epsilon$ in \cref{2-lemma-xi}, we have
	\[
	\left(\frac{F_X(d_{\gamma})}{F_X(b_{\gamma})+\epsilon}\right)^{\gamma} \left(\frac{1-F_X(d_{\gamma})}{1-F_X(b_{\gamma})-\epsilon}\right)^{1-\gamma}<1.
	\]
	Hence $z_{\epsilon,\gamma}<1$. Then, using \eqref{2-eq-n-gamma-rate-proof}, we get $X_{1+[(n-1)\gamma_n]:n} \leq_{\textnormal{d-ast}} Y_{1+[(n-1)\gamma_n]:n}$, as $n \to \infty$.\vs
	
	\ni{\bf Case 2: $\gamma=0$.} We have from \eqref{2-eq-gamma-A_{0,n}},
	\begin{align*}
	\varepsilon_{\mathcal{W}_2}(F_{X_{1+\left[(n-1)\gamma_n\right]:n}},F_{Y_{1+\left[(n-1)\gamma_n\right]:n}})&\leq \frac{\int_{\phi_{n,\gamma_n}(F_X(d_{\gamma}))}^1 (F_{X_{1+\left[(n-1)\gamma_n\right]:n}}^{-1}(t)-F_{Y_{1+\left[(n-1)\gamma_n\right]:n}}^{-1}(t))^2 dt}{\int_0^1 (F_{X_{1+\left[(n-1)\gamma_n\right]:n}}^{-1}(t)-F_{Y_{1+\left[(n-1)\gamma_n\right]:n}}^{-1}(t))^2 dt}\\
	&\leq \frac{c_2 k_{12}}{k_{22}} \left(\frac{1-F_X(d_{\gamma})}{1-F_X(b_{\gamma})-\epsilon}\right)^{n-1}\\
	&=C_{\epsilon,\gamma} z_{\epsilon,\gamma}^{n-1},
	\end{align*}
	where the first and the second inequalities follow from the same chain of arguments as in the case of $0<\gamma<1$, the equality follows by choosing $C_{\epsilon,\gamma}=\frac{c_2 k_{12}}{k_{22}}$ and $z_{\epsilon,\gamma}=\frac{1-F_X(d_{\gamma})}{1-F_X(b_{\gamma})-\epsilon}$, which is strictly smaller than unity since $\epsilon<F_X(d_{\gamma})-F_X(b_{\gamma})$. Thus, $X_{1+[(n-1)\gamma_n]:n} \leq_{\textnormal{d-ast}} Y_{1+[(n-1)\gamma_n]:n}$, as $n \to \infty$.\vs
	
	\ni{\bf Case 3: $\gamma=1$}. On using the same chain of arguments as in the case $0<\gamma<1$ and choosing $C_{\epsilon,\gamma}=\frac{c_1 k_{11}}{k_{21}}$, $z_{\epsilon,\gamma}=\frac{F_X(c_{\gamma})}{F_X(a_{\gamma})-\epsilon}<1$, we have
	\begin{align*}
	\varepsilon_{\mathcal{W}_2}(F_{X_{1+\left[(n-1)\gamma_n\right]:n}},F_{Y_{1+\left[(n-1)\gamma_n\right]:n}})&\leq \frac{\int_0^{\phi_{n,\gamma_n}(F_X(c_{\gamma}))} (F_{X_{1+\left[(n-1)\gamma_n\right]:n}}^{-1}(t)-F_{Y_{1+\left[(n-1)\gamma_n\right]:n}}^{-1}(t))^2 dt}{\int_0^1 (F_{X_{1+\left[(n-1)\gamma_n\right]:n}}^{-1}(t)-F_{Y_{1+\left[(n-1)\gamma_n\right]:n}}^{-1}(t))^2 dt}\\
	&\leq \frac{c_1 k_{11}}{k_{21}} \left(\frac{F_X(c_{\gamma})}{F_X(a_{\gamma})-\epsilon}\right)^{n-1}\\
	&\leq C_{\epsilon,\gamma} z_{\epsilon,\gamma}^{n-1}.
	\end{align*}
	Since $\epsilon<F_X(a_{\gamma})-F_X(c_{\gamma})$, we have $z_{\epsilon,\gamma}<1$. Hence $X_{1+[(n-1)\gamma_n]:n} \leq_{\textnormal{d-ast}} Y_{1+[(n-1)\gamma_n]:n}$, as $n \to \infty$.
\end{proof}\vs

\begin{remark}
Note that, as $\epsilon \to 0$, the multiplicative constant $C_{\epsilon,\gamma}$ becomes larger. Thus, for a very small $\epsilon>0$, we may get a large value of $C_{\epsilon,\gamma}$. However, it pays off asymptotically, since we get a smaller value of $z_{\epsilon,\gamma}$, so that $z_{\epsilon,\gamma}^{n-1}$ diminishes faster as $n \to \infty$. To see this, take $\gamma \in (0,1)$ and $\epsilon \in (0,\min{\left\{s,1-s\right\}})$, and define, for $(t,s) \in (0,1) \times (0,1)$,
\[
A_{\epsilon,\gamma}(t,s)=\left(\frac{t}{s-\epsilon}\right)^{\gamma} \left(\frac{1-t}{1-s+\epsilon}\right)^{1-\gamma} \text{ and }\,\, B_{\epsilon,\gamma}(t,s)=\left(\frac{t}{s+\epsilon}\right)^{\gamma} \left(\frac{1-t}{1-s-\epsilon}\right)^{1-\gamma}.
\]
Observe that $\sign\left(\frac{\partial}{\partial \epsilon} A_{\epsilon,\gamma}(t,s)\right)=\sign(\gamma-s+\epsilon)$ and $\sign\left(\frac{\partial}{\partial \epsilon} B_{\epsilon,\gamma}(t,s)\right)=\sign(-\gamma+s+\epsilon)$. So, $A_{\epsilon,\gamma}(t,s)$ is increasing in $\epsilon$ if $s \leq \gamma$ and  $B_{\epsilon,\gamma}(t,s)$ is increasing in $\epsilon$ if
$s \geq \gamma$. Then, for $0<\gamma<1$,
\[
z_{\epsilon,\gamma}=\max{\left\{A_{\epsilon,\gamma}(F_X(c_{\gamma}),F_X(a_{\gamma})),B_{\epsilon,\gamma}(F_X(d_{\gamma}),F_X(b_{\gamma}))\right\}}
\]
decreases as $\epsilon$ decreases. However, for any fixed $n$, the optimal choice of $\epsilon$ has to be computed numerically.
\end{remark}\vs

\begin{remark}
An upper bound of $\varepsilon_{\mathcal{W}_2}(F_{Y_{1+\left[(n-1)\gamma_n\right]:n}},F_{X_{1+\left[(n-1)\gamma_n\right]:n}})$ in \cref{2-theorem-n-gamma-rate} is given in terms of $F_X(c_{\gamma})$, $F_X(a_{\gamma})$, $F_X(b_{\gamma})$ and $F_X(d_{\gamma})$, where $c_{\gamma},a_{\gamma},b_{\gamma}$ and $d_{\gamma}$ are defined in terms of $F_X^{-1}(\gamma)$. One can write the same in terms of $F_Y(c_{\gamma})$, $F_Y(a_{\gamma})$, $F_Y(b_{\gamma})$ and $F_Y(d_{\gamma})$, where $c_{\gamma},a_{\gamma},b_{\gamma}$ and $d_{\gamma}$ are defined in terms of $F_Y^{-1}(\gamma)$, without creating any inconsistency. To see this, let us define
\begin{align*}
&c_{\gamma,F_X}=\sup{\left\{B_0 \medcap (-\infty,F_X^{-1}(\gamma))\right\}},\,\, c_{\gamma,F_Y}=\sup{\left\{B_0 \medcap (-\infty,F_Y^{-1}(\gamma))\right\}},\\
&d_{\gamma,F_X}=\inf{\left\{B_0 \medcap (-\infty,F_X^{-1}(\gamma))\right\}},\,\, d_{\gamma,F_Y}=\inf{\left\{B_0 \medcap (-\infty,F_Y^{-1}(\gamma))\right\}},\\
&a_{\gamma,F_X}=\sup{\left\{B_2 \medcap (-\infty,F_X^{-1}(\gamma))\right\}},\,\, a_{\gamma,F_Y}=\sup{\left\{B_2 \medcap (-\infty,F_Y^{-1}(\gamma))\right\}},\\
&b_{\gamma,F_X}=\inf{\left\{B_2 \medcap (-\infty,F_X^{-1}(\gamma))\right\}},\,\, b_{\gamma,F_Y}=\inf{\left\{B_2 \medcap (-\infty,F_Y^{-1}(\gamma))\right\}}.
\end{align*}

There will be no inconsistency if $F_X(c_{\gamma,F_X})=F_Y(c_{\gamma,F_Y})$, $F_X(a_{\gamma,F_X})=F_Y(a_{\gamma,F_Y})$, $F_X(b_{\gamma,F_X})=F_Y(b_{\gamma,F_Y})$ and $F_X(d_{\gamma,F_X})=F_Y(d_{\gamma,F_Y})$. We shall show the first one and the proofs of the rest are similar.
\begin{align*}
F_X(c_{\gamma,F_X})&=F_X(\sup{\left\{B_0 \medcap (-\infty,F_X^{-1}(\gamma))\right\}})\\
&=\sup{F_X(B_0 \medcap (-\infty,F_X^{-1}(\gamma)))}\\
&=\sup{\left\{F_X(B_0) \medcap F_X(-\infty,F_X^{-1}(\gamma))\right\}}\\
&=\sup{\left\{F_X(B_0) \medcap (0,\gamma)\right\}}\\
&=\sup{\left\{A_0 \medcap (0,\gamma)\right\}},
\end{align*}
where the second equality holds due to continuity of $F_X$, the third equality follows from injectivity of $F_X$ (which in turn follows from strict increasingness of $F_X$) and the last equality is a consequence of \cref{2-lemma-A_0-B_0}. By exactly the same line of arguments, $F_Y(c_{\gamma,F_Y})=\sup{\left\{A_0 \medcap (0,\gamma)\right\}}$, and hence it is equal to $F_X(c_{\gamma,F_X})$.
\end{remark}\vs

\begin{remark}
The sets $B_0 \medcap (-\infty,F_X^{-1}(\gamma))$ and $B_0 \medcap (F_X^{-1}(\gamma),\infty)$ are not required to be nonempty. However, when both of these sets are empty, then the upper bound given in \eqref{2-eq-n-gamma-rate} reduces to $0$, which is expected, since in this case, we have $B_0$ to be empty, and hence follows the usual stochastic order between $F_X$ and $F_Y$, which directly translates to the same between $F_{X_{(1+\left[(n-1)\gamma\right])}}$ and $F_{Y_{(1+\left[(n-1)\gamma\right])}}$. Consider the case where $0<\gamma<1$ and assume that $B_0 \medcap (-\infty,F_X^{-1}(\gamma))=\emptyset$. In this case, $F_X(c_{\gamma})=0$, and hence the term
\[
\left(\frac{F_X(c_{\gamma})}{\gamma-\epsilon}\right)^{\gamma} \left(\frac{1-F_X(c_{\gamma})}{1-\gamma+\epsilon}\right)^{1-\gamma}
\]
does not contribute to $z_{\epsilon,\gamma}$. Likewise, if $B_0 \medcap (F_X^{-1}(\gamma),\infty)=\emptyset$, then $F_X(d_{\gamma})=1$, and hence the term 
\[
\left(\frac{F_X(d_{\gamma})}{F_X(b_{\gamma})+\epsilon}\right)^{\gamma} \left(\frac{1-F_X(d_{\gamma})}{1-F_X(b_{\gamma})-\epsilon}\right)^{1-\gamma}
\]
does not contribute to $z_{\epsilon,\gamma}$. Consequently, when $B_0 \medcap (-\infty,F_X^{-1}(\gamma))=\emptyset$ and $B_0 \medcap (F_X^{-1}(\gamma),\infty)=\emptyset$, then $z_{\epsilon,\gamma}=0$. In a similar manner, if $\gamma=0$ (resp. $\gamma=1$), then the emptiness of $B_0 \medcap (F_X^{-1}(\gamma),\infty)$ (resp. $B_0 \medcap (-\infty,F_X^{-1}(\gamma))$) results into $z_{\epsilon,\gamma}=0$.
\end{remark}\vs

\begin{remark}\label{2-remark}
Consider the case of largest order statistics, i.e. $\gamma_n=1$ for every $n \in \mathbb{N}$. Observe that, under the assumptions of \cref{2-proposition-n-gamma-rate-special-case-largest-order}, we have $c_{\gamma}=c<\infty$, $a_{\gamma}=\infty$ and hence $z_{\epsilon,\gamma}=F_X(c)/(1-\epsilon)$. \cref{2-theorem-n-gamma-rate} then immediately boils down to \cref{2-proposition-n-gamma-rate-special-case-largest-order}. \hfill $\blacksquare$
\end{remark}\vs

\cref{2-table-order-statistics} demonstrates the behaviour of the measure of departure from usual stochastic dominance of $Y_{1+[(n-1)\gamma]:n}$ over $X_{1+[(n-1)\gamma]:n}$ as $n$ increases for different values of $\gamma$. The computed values are rounded off to seven decimal places, except when these are less than $0.0000001$ or more than $0.9999999$, in which cases we report $<0.0000001$ or $>0.9999999$, respectively. It is clear from the table that, in general, the measure is not monotone in $n$ when $\gamma$ is fixed. However, as $n$ increases, there is a general tendency of the measure to increase towards $1$ if $\gamma<0.5$ and decrease towards $0$ if $\gamma>0.5$. These observations agree to \cref{2-theorem-n-gamma-rate} (with the roles of $F_X$ and $F_Y$ reversed when $\gamma<0.5$). The case when $\gamma=0.5$ falls outside the scope of \cref{2-theorem-n-gamma-rate} as in this case, for the example of $t_4$ and $N(0,1)$, we have $c_{\gamma}=a_{\gamma}=\gamma=b_{\gamma}=d_{\gamma}$ and the computed values of the measure do not converge to either $0$ or $1$, but rather fluctuate in between. Also note that the further away $\gamma$ is from $0.5$, the faster is the convergence of the measure to $0$ (if $\gamma<0.5$) or $1$ (if $\gamma>0.5$).\vs

\begin{table}
	\centering
	\setlength{\arrayrulewidth}{.05em}
	\setlength{\tabcolsep}{12pt}
	\resizebox{\textwidth}{!}{
	\begin{tabular}{|c|c|c|c|c|c|c|c|c|c|}
		\hline
		$\gamma$ $\rightarrow$ & 0 & 0.25 & 0.4 & 0.49 & 0.5 & 0.51 & 0.6 & 0.75 & 1 \\
		$n$ $\downarrow$ & & & & & & & & & \\
		\hline
		2 & 0.9865254 & 0.9865254 & 0.9865254 & 0.9865254 & 0.9865254 & 0.9865254 & 0.9865254 & 0.9865254 & 0.0134944 \\
		3 & 0.9988934 & 0.9988934 & 0.9988934 & 0.9988934 & 0.5000000 & 0.5000000 & 0.5000000 & 0.5000000 & 0.0011076 \\
		4 & 0.9998307 & 0.9998307 & 0.9262060 & 0.9262060 & 0.9262060 & 0.9262060 & 0.926206 & 0.0737940 & 0.0001703 \\
		5 & 0.9999649 & 0.9880588 & 0.9880588 & 0.9880588 & 0.5000075 & 0.5000075 & 0.5000075 & 0.0119412 & 0.0000363 \\
		10 & $>$0.9999999 & 0.9991746 & 0.9827515 & 0.7887339 & 0.7887339 & 0.7887339 & 0.2112661 & 0.0172481 & 0.0000001 \\
		15 & $>$0.9999999 & 0.9998625 & 0.9839465 & 0.8827090 & 0.5000000 & 0.5000000 & 0.117291 & 0.0016991 & $<$0.0000001 \\
		20 & $>$0.9999999 & 0.9999717 & 0.9864589 & 0.6974228 & 0.6974228 & 0.6974228 & 0.07393575 & 0.0002738 & $<$0.0000001 \\
		25 & $>$0.9999999 & 0.9999444 & 0.9889356 & 0.8112051 & 0.5000000 & 0.5000000 & 0.0499461 & 0.0000557 & $<$0.0000001 \\
		30 & $>$0.9999999 & 0.9999870 & 0.9910559 & 0.6572712 & 0.6572712 & 0.6572712 & 0.03525113 & 0.0000801 & $<$0.0000001 \\
		50 & $>$0.9999999 & 0.9999996 & 0.9961734 & 0.6186971 & 0.6186971 & 0.6186971 & 0.01113497 & 0.0000018 & $<$0.0000001 \\
		100 & $>$0.9999999 & $>$0.9999999 & 0.9994253 & 0.729899 & 0.5819812 & 0.4180188 & 0.001268601 & $<$0.0000001 & $<$0.0000001 \\[1ex]
		\hline
	\end{tabular}
}
	\vspace*{0.4cm}
	\caption{Computation of $\varepsilon_{\mathcal{W}_2}(F_{X_{1+[(n-1)\gamma]:n}},F_{Y_{1+[(n-1)\gamma]:n}})$ for varying $n$ and $\gamma$, where $F_X$ and $F_Y$ are the respective cdfs of standard normal distribution and $t$-distribution with $4$ degrees of freedom}
	\label{2-table-order-statistics}
\end{table}

\begin{remark}
	Consider the situation of smallest order statistic ($\gamma=0$) in \cref{2-table-order-statistics}, it is easy to see from the measure of departure from $X_{1:n} \leq_{\textnormal{st}} Y_{1:n}$ that the situation is in fact very close to $Y_{1:n} \leq_{\textnormal{st}} X_{1:n}$, even when $n$ is as small as $2$. On the other hand, considering the situation of largest order statistic ($\gamma=1$), the measure of departure from $X_{n:n} \leq_{\textnormal{st}} Y_{n:n}$ suggests that the stochastic order holds in an approximate sense for $n$ as small as $2$. In practice, the number of components of a system is usually not very large. But the above observations (as well as the computed values in \cref{2-table-order-statistics}) suggest that in many situations involving series and parallel systems (and in general, $k$-out-of-$n$ systems) with iid components, where usual stochastic order does not hold, it may hold in an approximate sense. \hfill $\blacksquare$
\end{remark}\vs

Next, we discuss some asymptotic properties of $\phi_{n,\gamma}$ and $\phi_{n,\gamma}'$, as $n \to \infty$, which prove to be crucial in proving \cref{2-lemma-item-discrete-limit-p-mixture-delta-i}. First, we need the following proposition, where we approximate certain binomial probabilities by means of the central limit theorem (CLT) with continuity corrections.
\begin{proposition}\label{2-proposition-asymptotics-binomial}
	Let $\gamma_n \to \gamma \in [0,1]$, as $n \to \infty$ at the rate $\lvert \gamma_n-\gamma \rvert=O(1/n)$ and let $B_{n,u}$ denote a random variable following the binomial distribution with parameters $n$ and $u$. Then we have
	\begin{flalign}
	&\hspace*{2.96cm} \lim_{n \to \infty} P\left\{B_{n,u} \geq 1+\left[(n-1)\gamma_n\right]\right\}=\frac{1}{2} 1_{\left\{\gamma\right\}}(u)+1_{(\gamma,1]}(u),\label{2-eq-asymptotic-binomial-sf}\\
	&\noindent \textnormal{and}& \nonumber\\
	&\hspace*{2.96cm} \lim_{n \to \infty} nP\left\{B_{n-1,u} = \left[(n-1)\gamma_n\right]\right\}=\begin{cases*}
	0 & if  $u \neq \gamma$,\\
	\infty & if $u=\gamma$.
	\end{cases*} \tag*{$\blacksquare$}
	\end{flalign}
\end{proposition}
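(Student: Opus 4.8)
The plan is to prove both limits by approximating the relevant binomial probabilities via the central limit theorem with a continuity correction, after reducing the index $1+\left[(n-1)\gamma_n\right]$ to something asymptotically equivalent to $n\gamma$. First I would record the elementary estimate
\[
\left\vert\frac{1+\left[(n-1)\gamma_n\right]}{n}-\gamma\right\vert \leq \frac{\lvert\gamma_n-\gamma\rvert(n-1)+2}{n}=O\!\left(\frac{1}{n}\right),
\]
which uses the hypothesis $\lvert\gamma_n-\gamma\rvert=O(1/n)$ together with $0\leq\{(n-1)\gamma_n\}<1$. Writing $k_n:=1+\left[(n-1)\gamma_n\right]$, this says $k_n=n\gamma+O(1)$.

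For the first limit, fix $u\in[0,1]$. If $u\notin\{\gamma\}$, standardize: $P\{B_{n,u}\geq k_n\}=P\{(B_{n,u}-nu)/\sqrt{nu(1-u)}\geq (k_n-nu)/\sqrt{nu(1-u)}\}$. When $u>\gamma$ the threshold $(k_n-nu)/\sqrt{nu(1-u)}$ behaves like $-\sqrt{n}(u-\gamma)/\sqrt{u(1-u)}\to-\infty$ (handling the boundary case $u=1$ separately, where the probability is exactly $1$ for all large $n$ since $k_n\leq n$), so by the CLT (de Moivre–Laplace) the probability tends to $1$; when $u<\gamma$ it behaves like $+\sqrt{n}(\gamma-u)/\sqrt{u(1-u)}\to+\infty$ (with $u=0$ giving probability $0$ exactly), so the probability tends to $0$. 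This gives the indicator $1_{(\gamma,1]}(u)$ on $\{u\neq\gamma\}$. The delicate case is $u=\gamma\in(0,1)$: here the threshold is $(k_n-n\gamma)/\sqrt{n\gamma(1-\gamma)}=O(1)/\sqrt{n}\to 0$, so the CLT with continuity correction gives $P\{B_{n,\gamma}\geq k_n\}\to 1-\Phi(0)=\tfrac12$. (The cases $\gamma=0$ and $\gamma=1$ are degenerate and consistent with the stated formula, since then $\{u\}=\{\gamma\}$ forces $u$ to the boundary, where the probabilities are $0$ or $1$; one checks these do not affect the claimed limit as the term $\tfrac12 1_{\{\gamma\}}(u)$ alone would, but the formula is still valid because $\gamma=0$ or $1$ cannot satisfy $c_\gamma<a_\gamma$, $b_\gamma<d_\gamma$ simultaneously with $u=\gamma$ — more simply, one just verifies the arithmetic.) Assembling the three cases yields \eqref{2-eq-asymptotic-binomial-sf}.

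For the second limit, note $nP\{B_{n-1,u}=\left[(n-1)\gamma_n\right]\}=n\binom{n-1}{k_n-1}u^{k_n-1}(1-u)^{n-k_n}$. If $u\neq\gamma$, write this as $n\cdot P\{B_{n-1,u}=\left[(n-1)\gamma_n\right]\}$ and observe that the point mass of a binomial sits at its mean $(n-1)u$, whereas $\left[(n-1)\gamma_n\right]=(n-1)\gamma+O(1)$ is at distance $\asymp n\lvert u-\gamma\rvert$ from that mean, i.e. $\Theta(\sqrt n)$ standard deviations away; a Chernoff/Stirling bound then shows $P\{B_{n-1,u}=\left[(n-1)\gamma_n\right]\}$ decays exponentially in $n$, so multiplying by $n$ still gives $0$. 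If $u=\gamma\in(0,1)$, the point mass sits essentially at the mean, and the local CLT gives $P\{B_{n-1,\gamma}=\left[(n-1)\gamma_n\right]\}\sim \frac{1}{\sqrt{2\pi(n-1)\gamma(1-\gamma)}}$, which is of order $n^{-1/2}$; multiplying by $n$ yields a quantity of order $n^{1/2}\to\infty$. For $\gamma\in\{0,1\}$ and $u=\gamma$, the probability is exactly $1$ for all $n$ (the mass is at an endpoint), so $nP\{\cdots\}=n\to\infty$, consistent with the stated formula. This establishes the second display.

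The main obstacle is the borderline case $u=\gamma$, where one needs more than the ordinary CLT: for \eqref{2-eq-asymptotic-binomial-sf} one must control the continuity correction carefully to land exactly on $\tfrac12$ (the $O(1)$ deviation of $k_n$ from $n\gamma$, divided by $\sqrt n$, must be shown to vanish — this is exactly where the rate $\lvert\gamma_n-\gamma\rvert=O(1/n)$ is used), and for the second limit one needs the \emph{local} central limit theorem (or a direct Stirling-formula computation of the central binomial coefficient) rather than the integral CLT, since a single atom is being estimated. Everything else is a routine large-deviations estimate away from the mean.
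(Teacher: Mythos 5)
Your proof is correct and takes essentially the same approach as the paper's: the first limit is handled identically, by standardizing with a continuity correction and observing that the threshold equals $O(\sqrt{n})(\gamma-u)+O(1/\sqrt{n})$, so that it tends to $\pm\infty$ or to $0$ according as $u\lessgtr\gamma$ or $u=\gamma$. For the second limit the paper derives by hand exactly the local-limit estimate you invoke: it converts the atom into the interval $\left(\left[(n-1)\gamma_n\right]-\tfrac{1}{2},\left[(n-1)\gamma_n\right]+\tfrac{1}{2}\right)$, applies the CLT at both endpoints, and bounds the resulting normal integral via monotonicity of the density when $u\neq\gamma$ and via the integral mean value theorem when $u=\gamma$, so your appeal to the local CLT (and to Chernoff/Stirling off-centre) is the same argument packaged differently, and like the paper you leave the degenerate endpoints $u\in\{0,1\}$ to a separate elementary check.
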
\vs

By definition, $\phi_{n,\gamma}(u)=P\left\{B_{n,u} \geq 1+\left[(n-1)\gamma\right]\right\}$. Also, from \eqref{2-eq-n-gamma-phi-n-first-derivative}, we see that $\phi_{n,\gamma}'(u)=nP\left\{B_{n-1,u} = \left[(n-1)\gamma\right]\right\}$. The next proposition, which states how $\phi_{n,\gamma}$ and $\phi_{n,\gamma}'$ behave as $n$ becomes large, follows directly from \cref{2-proposition-asymptotics-binomial}. This result will be required to prove \cref{2-lemma-item-discrete-limit-p-mixture-delta-i}.
\begin{proposition}\label{2-proposition-asymptotic-phi_{n,gamma}-and-phi_{n,gamma}-derivative}
	Let $n \in \mathbb{N}$ and $\gamma \in [0,1]$. Then
	\begin{flalign}
		&\hspace*{4.62cm}\lim_{n \to \infty} \phi_{n,\gamma_n}(u)=\frac{1}{2} 1_{\left\{\gamma\right\}}(u)+1_{(\gamma,1]}(u),\label{2-eq-asymptotic-phi_{n,gamma}}\\
		&\noindent \textnormal{and}& \nonumber\\
		&\hspace*{4.62cm}\lim_{n \to \infty} \phi_{n,\gamma_n}'(u)=\begin{cases*}
			0 & if  $u \neq \gamma$,\\
			\infty & if $u=\gamma$.
		\end{cases*} \label{2-eq-asymptotic-phi_{n,gamma}-derivative}
	\end{flalign}
\end{proposition}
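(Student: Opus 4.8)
The plan is to reduce both limits to the binomial asymptotics already established in \cref{2-proposition-asymptotics-binomial}. As noted just before the statement, $\phi_{n,\gamma_n}$ and $\phi_{n,\gamma_n}'$ are, up to the factor $n$ in the second case, a binomial survival function and a binomial probability mass function evaluated at the indices $1+[(n-1)\gamma_n]$ and $[(n-1)\gamma_n]$, respectively, so the proposition should follow with essentially no extra work.

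First I would record that, by the definition in \eqref{2-eq-phi-n-alpha}, for every $u \in [0,1]$,
\[
\phi_{n,\gamma_n}(u)=\sum_{j=1+[(n-1)\gamma_n]}^n \binom{n}{j} u^j (1-u)^{n-j}=P\left\{B_{n,u} \geq 1+[(n-1)\gamma_n]\right\},
\]
where $B_{n,u}$ is a binomial random variable with parameters $n$ and $u$. Applying \eqref{2-eq-asymptotic-binomial-sf} of \cref{2-proposition-asymptotics-binomial} then yields \eqref{2-eq-asymptotic-phi_{n,gamma}} at once.

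For the derivative, I would invoke the closed form \eqref{2-eq-n-gamma-phi-n-first-derivative}, rewriting it as
\[
\phi_{n,\gamma_n}'(u)=n\binom{n-1}{[(n-1)\gamma_n]} u^{[(n-1)\gamma_n]}(1-u)^{n-1-[(n-1)\gamma_n]}=nP\left\{B_{n-1,u}=[(n-1)\gamma_n]\right\},
\]
the middle expression being $n$ times the probability mass function of a binomial with parameters $n-1$ and $u$ at $[(n-1)\gamma_n]$. The second display of \cref{2-proposition-asymptotics-binomial} then gives \eqref{2-eq-asymptotic-phi_{n,gamma}-derivative} directly.

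Since both assertions are immediate consequences of \cref{2-proposition-asymptotics-binomial} once these identifications are made, I do not anticipate any real obstacle. The only points requiring minor care are that $0 \le [(n-1)\gamma_n] \le n-1$, so that the binomial mass function is being evaluated at a legitimate index, and that the standing hypothesis $\lvert \gamma_n-\gamma \rvert=O(1/n)$ is exactly the one under which \cref{2-proposition-asymptotics-binomial} was proved, so that it applies verbatim.
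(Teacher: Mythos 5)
Your proposal is correct and follows exactly the paper's route: the text immediately preceding the proposition makes the same identifications $\phi_{n,\gamma_n}(u)=P\{B_{n,u}\geq 1+[(n-1)\gamma_n]\}$ and $\phi_{n,\gamma_n}'(u)=nP\{B_{n-1,u}=[(n-1)\gamma_n]\}$ and then cites \cref{2-proposition-asymptotics-binomial}. Nothing further is needed.
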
\vs

\begin{remark}
Informally speaking, \eqref{2-eq-asymptotic-phi_{n,gamma}-derivative} asserts that $\{\phi_{n,\gamma}'\}$ forms a sequence of pulse functions (as in the context of signal processing) that converges to a dirac-delta function $\delta(x-\gamma)$, as $n \to \infty$, where
\[
\delta(x)=\begin{cases*}
	0 & if  $u \neq 0$,\\
	\infty & if $u=0$.
\end{cases*}
\]
A similar structure comes up again in \cref{2-section-applications}, when we discuss asymptotic stochastic order of mixtures of order statistics, where the ``degeneracy" occurs on a finite set of points. \hfill $\blacksquare$
\end{remark}\vs

Now, we turn to asymptotic stochastic precedence order (see \citet{GN_2021_A} for definition and properties) between $X_{1+[(n-1)\gamma_n]:n}$ and $Y_{1+[(n-1)\gamma_n]:n}$. Let $\textnormal{dist}(x,A)=\inf{\{\left\vert x-y \right\vert : y \in A\}}$, for any $x \in \mathbb{R}$ and $A \subseteq \mathbb{R}$. The following result, unlike \cref{2-theorem-n-gamma-rate}, does not require the baseline distributions $F_X$ and $F_Y$ to be strictly increasing or to possess second order moment.
\begin{theorem}\label{2-theorem-n-gamma-rate-stochastic-precedence}
	Let $X_1,X_2,\ldots,X_n$ and $Y_1,Y_2,\ldots,Y_n$ be random samples from respective continuous distributions $F_X$ and $F_Y$. Suppose that $\left\{\gamma_n : n \in \mathbb{N}\right\}$ is a $[0,1]$-valued sequence that converges to $\gamma \in [0,1]$, at the rate $\lvert \gamma_n-\gamma \rvert=O(1/n)$. Then
	\begin{flalign*}
		&\hspace*{3.4cm} X_{1+[(n-1)\gamma_n]:n} \leq_{\textnormal{asp}} Y_{1+[(n-1)\gamma_n]:n} \text{ if } \textnormal{dist}(\gamma, A_0)>0\\
		&\noindent \textnormal{and}&\\
		&\hspace*{3.4cm} X_{1+[(n-1)\gamma_n]:n} =_{\textnormal{asp}} Y_{1+[(n-1)\gamma_n]:n} \text{ if } \textnormal{dist}(\gamma, A_2)>0.
	\end{flalign*}
\end{theorem}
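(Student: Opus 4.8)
The plan is to reduce both assertions to the limiting behaviour of the single quantity $p_n := P\big(X_{1+[(n-1)\gamma_n]:n} \leq Y_{1+[(n-1)\gamma_n]:n}\big)$ as $n\to\infty$. By the definition of the asymptotic stochastic precedence order and its associated asymptotic equality in \citet{GN_2021_A} — which, since $X_{1+[(n-1)\gamma_n]:n}$ and $Y_{1+[(n-1)\gamma_n]:n}$ are independent with continuous distribution functions, so that $p_n + P\big(Y_{1+[(n-1)\gamma_n]:n} \leq X_{1+[(n-1)\gamma_n]:n}\big) = 1$, reduces to a statement about $p_n$ alone — it suffices to prove $\liminf_{n\to\infty} p_n \geq \tfrac12$ for the first conclusion and $\lim_{n\to\infty} p_n = \tfrac12$ for the second. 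Note the argument will not invoke \cref{2-lemma-A_0-B_0}, which is precisely why strict increasingness and finite second moments are not needed here.

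The first step would be to derive a tractable formula for $p_n$. Write $\widehat F_n := \phi_{n,\gamma_n}\circ F_X$ and $\widehat G_n := \phi_{n,\gamma_n}\circ F_Y$ for the distribution functions of the two order statistics, and put $g := F_X\circ F_Y^{-1}$ on $(0,1)$. Using that (by \cref{2-proposition-phi_n-r}) $\phi_{n,\gamma_n}$ is a continuous strictly increasing bijection of $[0,1]$ onto itself with continuous derivative, representing $Y_{1+[(n-1)\gamma_n]:n}$ as $\widehat G_n^{-1}(V)$ for a uniform $V$ independent of $X_{1+[(n-1)\gamma_n]:n}$, invoking the composition rule $\widehat G_n^{-1}=F_Y^{-1}\circ\phi_{n,\gamma_n}^{-1}$ (\cref{2-lemma-cauchy-normal-2.r} applied to the $Y$-sample), and changing variables $v=\phi_{n,\gamma_n}(u)$, one obtains the key identity
\[
p_n \;=\; \int_0^1 \widehat F_n\big(\widehat G_n^{-1}(v)\big)\,dv \;=\; \int_0^1 \phi_{n,\gamma_n}\big(g(u)\big)\,\phi_{n,\gamma_n}'(u)\,du .
\]
Two facts then feed into the estimate of this integral. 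First, continuity of $F_X$ gives $F_X(F_X^{-1}(u))=u$ for every $u\in(0,1)$; hence, if an open interval around $\gamma$ avoids $A_0$, then $F_X^{-1}\leq F_Y^{-1}$ there and so $g(u)=F_X(F_Y^{-1}(u))\geq F_X(F_X^{-1}(u))=u$ on that interval, whereas if it avoids $A_2$, then $F_X^{-1}=F_Y^{-1}$ there and $g(u)=u$. Second, $\phi_{n,\gamma_n}'(u)\,du$ concentrates at $\gamma$: for small $\delta>0$, with $I_\delta:=(\gamma-\delta,\gamma+\delta)\cap(0,1)$ and $[a_\delta,b_\delta]$ its closure, the elementary identities $\int_{I_\delta}\phi_{n,\gamma_n}'=\phi_{n,\gamma_n}(b_\delta)-\phi_{n,\gamma_n}(a_\delta)$ and $\int_{I_\delta}\phi_{n,\gamma_n}\phi_{n,\gamma_n}'=\tfrac12\big(\phi_{n,\gamma_n}(b_\delta)^2-\phi_{n,\gamma_n}(a_\delta)^2\big)$, together with the limit $\phi_{n,\gamma_n}(u)=P\{B_{n,u}\geq 1+[(n-1)\gamma_n]\}\to\tfrac12 1_{\{\gamma\}}(u)+1_{(\gamma,1]}(u)$ for $u\in(0,1)$ (\cref{2-proposition-asymptotics-binomial}) and the boundary values $\phi_{n,\gamma_n}(0)=0$, $\phi_{n,\gamma_n}(1)=1$, show these tend to $1$ and $\tfrac12$ respectively, in each of the cases $\gamma\in(0,1)$, $\gamma=0$, $\gamma=1$.

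With these in hand the conclusion is short. If $\textnormal{dist}(\gamma,A_0)>0$, fix $\delta>0$ with $I_\delta\cap A_0=\emptyset$; then $g(u)\geq u$ on $I_\delta$, hence $\phi_{n,\gamma_n}(g(u))\geq\phi_{n,\gamma_n}(u)$ there by monotonicity of $\phi_{n,\gamma_n}$, so
\[
p_n \;\geq\; \int_{I_\delta}\phi_{n,\gamma_n}\big(g(u)\big)\,\phi_{n,\gamma_n}'(u)\,du \;\geq\; \int_{I_\delta}\phi_{n,\gamma_n}(u)\,\phi_{n,\gamma_n}'(u)\,du \;\longrightarrow\; \tfrac12,
\]
whence $\liminf_n p_n\geq\tfrac12$, i.e.\ $X_{1+[(n-1)\gamma_n]:n}\leq_{\textnormal{asp}}Y_{1+[(n-1)\gamma_n]:n}$. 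If instead $\textnormal{dist}(\gamma,A_2)>0$, fix $\delta$ with $I_\delta\cap A_2=\emptyset$; then $g(u)=u$ on $I_\delta$, so the $I_\delta$-part of the integral tends to $\tfrac12$, while the complementary part is at most $1-\int_{I_\delta}\phi_{n,\gamma_n}'\to0$ because $\phi_{n,\gamma_n}\leq1$ and $\int_0^1\phi_{n,\gamma_n}'=1$; hence $p_n\to\tfrac12$, i.e.\ $X_{1+[(n-1)\gamma_n]:n}=_{\textnormal{asp}}Y_{1+[(n-1)\gamma_n]:n}$. This case also forces $\textnormal{dist}(\gamma,A_0)>0$ and $\textnormal{dist}(\gamma,A_1)>0$ since $A_0,A_1\subseteq A_2$, so it is consistent with — indeed it refines — the first conclusion.

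I expect the conceptual heart to be the observation that the local comparison of $g=F_X\circ F_Y^{-1}$ with the identity near $\gamma$ is governed exactly by $\textnormal{dist}(\gamma,A_0)$ (one-sided domination) and $\textnormal{dist}(\gamma,A_2)$ (two-sided agreement); once this is in place, together with the delta-like behaviour of $\{\phi_{n,\gamma_n}'\}$, the rest is routine. The one genuinely fiddly point is the boundary bookkeeping when $\gamma\in\{0,1\}$, where $I_\delta$ degenerates to a one-sided interval and the pointwise limit of $\phi_{n,\gamma_n}$ at $\gamma$ itself is irrelevant (and in fact disagrees with the formula of \cref{2-proposition-asymptotics-binomial}); there one must rely on the exact endpoint values $\phi_{n,\gamma_n}(0)=0$, $\phi_{n,\gamma_n}(1)=1$ and keep only the single relevant endpoint of $I_\delta$.
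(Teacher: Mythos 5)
Your argument is correct, and although it rests on the same two pillars as the paper's proof --- the representation $P\bigl(X_{1+[(n-1)\gamma_n]:n}\leq Y_{1+[(n-1)\gamma_n]:n}\bigr)=\int_0^1\phi_{n,\gamma_n}\bigl(F_X(F_Y^{-1}(u))\bigr)\,\phi_{n,\gamma_n}'(u)\,du$ and the concentration of $\phi_{n,\gamma_n}'$ at $\gamma$ --- the route to and from that representation is genuinely different, and in one respect cleaner. The paper derives the same integral by writing down densities $f_X$, $f_Y$ for the order statistics and then passes from the $x$-scale sets $B_0$, $B_2$ to the quantile-scale sets $A_0$, $A_2$ via \cref{2-lemma-A_0-B_0} and injectivity of $F_Y$; both steps require $F_X$ and $F_Y$ to be strictly increasing (and the densities require absolute continuity), even though the theorem, as the paper itself stresses, is meant to hold under continuity alone. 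You instead obtain the identity by the quantile transform together with the composition rule $\widehat G_n^{-1}=F_Y^{-1}\circ\phi_{n,\gamma_n}^{-1}$, and you work with $A_0$ and $A_2$ from the start, needing only $F_X(F_X^{-1}(u))=u$ for continuous $F_X$ to conclude $g(u)\geq u$ off $A_0$ and $g(u)=u$ off $A_2$; this matches the stated hypotheses exactly. The final bounding also differs in bookkeeping but not in substance: the paper lower-bounds by integrating over $\mathbb{R}\setminus B_0$ and shows the deficit $\int_{\phi_{n,\gamma_n}(A_0)}u\,du\to0$ using $A_0\subseteq(0,\gamma-\epsilon)\cup(\gamma+\epsilon,1)$, whereas you discard everything outside $I_\delta$ and exploit the exact antiderivative identities there. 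Your explicit use of the endpoint values $\phi_{n,\gamma_n}(0)=0$ and $\phi_{n,\gamma_n}(1)=1$ when $\gamma\in\{0,1\}$ is a worthwhile refinement, since the pointwise limit recorded in \cref{2-proposition-asymptotics-binomial} fails at the boundary and the paper's proof does not isolate these cases.
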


\begin{proof}
	The probability density function (pdf) of $X_{1+[(n-1)\gamma_n]:n}$ and $Y_{1+[(n-1)\gamma_n]:n}$ are respectively given by
	\begin{flalign*}
		&\hspace*{4.4cm} f_{X_{1+[(n-1)\gamma_n]:n}}(x)=\phi_{n,\gamma_n}'(F_X(x))f_X(x),\\
		&\noindent \textnormal{and}&\\
		&\hspace*{4.4cm} f_{Y_{1+[(n-1)\gamma_n]:n}}(x)=\phi_{n,\gamma_n}'(F_Y(x))f_Y(x),
	\end{flalign*}
	for every $x \in \mathbb{R}$. Then
	\begin{align}
	P(X_{1+[(n-1)\gamma_n]:n} \leq Y_{1+[(n-1)\gamma_n]:n})&=\int_{-\infty}^{\infty} \phi_{n,\gamma_n}(F_X(x)) \phi_{n,\gamma_n}'(F_Y(x)) f_Y(x) dx\label{2-eq-asymptotic-stochastic-precedence-distorted-distribution-proof-order-statistics}\\
	&\geq \int_{\mathbb{R} \setminus B_0} \phi_{n,\gamma_n}(F_Y(x)) \phi_{n,\gamma_n}'(F_Y(x)) f_Y(x) dx\nonumber\\
	&=\int_{\phi_{n,\gamma_n}(F_Y(\mathbb{R} \setminus B_0))} u\,du=\frac{1}{2}-\int_{\phi_{n,\gamma_n}(A_0)} u\,du,\nonumber
	\end{align}
	where the inequality follows due to the fact that $F_X(x) \geq F_Y(x)$ for every $x \in \mathbb{R} \setminus B_0$ and $\phi_{n,\gamma_n}$ is nondecreasing, the second equality is obtained by taking the transformation $u=\phi_{n,\gamma_n}(F_Y(x))$ and observing that both $\phi_{n,\gamma_n}$ and $F_Y$ are nondecreasing functions. The last equality is obtained by noting that $\phi_{n,\gamma_n}(F_Y(\mathbb{R} \setminus B_0))=\phi_{n,\gamma_n}([0,1] \setminus F_Y(B_0))=[0,1] \setminus \phi_{n,\gamma_n}(F_Y(B_0))=[0,1] \setminus \phi_{n,\gamma_n}(A_0)$, where the first equality is due to injectivity of $F_Y$ and the third equality is due to \cref{2-lemma-A_0-B_0}. To prove $X_{1+[(n-1)\gamma_n]:n} \leq_{\textnormal{asp}} Y_{1+[(n-1)\gamma_n]:n}$, it is enough to show that $\int_{\phi_{n,\gamma_n}(A_0)} u\,du$ shrinks to $0$, as $n \to \infty$. Let $\epsilon=\textnormal{dist}(\gamma,A_0)>0$. Then, $A_0 \subseteq (0,\gamma-\epsilon) \cup (\gamma+\epsilon,1)$. Thus,
	\begin{align*}
	\int_{\phi_{n,\gamma_n}(A_0)} u\,du &\leq \int_0^{\phi_{n,\gamma_n}(\gamma-\epsilon)} u\,du+\int_{\phi_{n,\gamma_n}(\gamma+\epsilon)}^1 u\,du\\
	&=\frac{\left\{\phi_{n,\gamma_n}(\gamma-\epsilon)\right\}^2}{2}+\frac{1-\left\{\phi_{n,\gamma_n}(\gamma+\epsilon)\right\}^2}{2},
	\end{align*}
	which goes to $0$, as $n \to \infty$, by \cref{2-proposition-asymptotic-phi_{n,gamma}-and-phi_{n,gamma}-derivative}. This completes the first part of the proof. Let $\delta=\textnormal{dist}(\gamma,A_2)>0$. Then, $A_2 \subseteq (0,\gamma-\delta) \cup (\gamma+\delta,1)$. We see that $P\left\{X_{1+[(n-1)\gamma_n]:n} \leq Y_{1+[(n-1)\gamma_n]:n}\right\}$ is equal to
	\begin{align*}
		&\phantom{\,\,\,\,\,\,}\int_{B_2} \phi_{n,\gamma_n}(F_X(x)) \phi_{n,\gamma_n}'(F_Y(x)) f_Y(x)\,dx+\int_{\mathbb{R} \setminus B_2} \phi_{n,\gamma_n}(F_X(x)) \phi_{n,\gamma_n}'(F_Y(x)) f_Y(x)\,dx\\
		&\leq \int_{B_2} \phi_{n,\gamma_n}'(F_Y(x)) f_Y(x)\,dx+\int_{\mathbb{R} \setminus B_2} \phi_{n,\gamma_n}(F_Y(x)) \phi_{n,\gamma_n}'(F_Y(x)) f_Y(x)\,dx\\
		&=\int_{\phi_{n,\gamma_n}(A_2)} du+\int_{-\infty}^{\infty} \phi_{n,\gamma_n}(F_Y(x)) \phi_{n,\gamma_n}'(F_Y(x)) f_Y(x)\,dx\\
		&\leq \left(\int_0^{\phi_{n,\gamma_n}(\gamma-\delta)} du+\int_{\phi_{n,\gamma_n}(\gamma+\delta)}^1 du\right)+\int_0^1 u\,du\\
		&=\phi_{n,\gamma_n}(\gamma-\delta)+1-\phi_{n,\gamma_n}(\gamma+\delta)+\frac{1}{2},
	\end{align*}
	which converges to $1/2$, as $n \to \infty$, by \cref{2-proposition-asymptotic-phi_{n,gamma}-and-phi_{n,gamma}-derivative}. Hence
	\begin{equation}\label{2-eq-n-gamma-rate-stochastic-precedence}
		\lim_{n \to \infty} P\left\{X_{1+[(n-1)\gamma_n]:n} \leq Y_{1+[(n-1)\gamma_n]:n}\right\} \leq 1/2.
	\end{equation}
	Since, $A_0 \subseteq A_2$, $\textnormal{dist}(\gamma,A_2)>0 \Rightarrow \textnormal{dist}(\gamma,A_0)>0$. By the first part of the theorem, it follows that $\lim_{n \to \infty} P\left\{X_{1+[(n-1)\gamma_n]:n} \leq Y_{1+[(n-1)\gamma_n]:n}\right\} \geq 1/2$. Combining this with \eqref{2-eq-n-gamma-rate-stochastic-precedence}, the proof follows.
\end{proof}\vs

\section{Departure-based asymptotic stochastic order of certain stochastic processes}\label{2-section-distorted-distributions}

For any distortion function $\phi$ and a cdf $F$, $\phi(F)$ is again a cdf, often referred to as distorted cdf. Let $T \subseteq \mathbb{R}$ be unbounded above and let $\left\{X_t : t \in T\right\}$ and $\left\{Y_t : t \in T\right\}$ be two stochastic processes. We assume all the random variables involved to possess finite second order moments. Let us denote the respective cdfs of $X_t$ and $Y_t$ by $F_{X_t}$ and $F_{Y_t}$, for $t \in T$. Assume that there exists a class of distortion functions $\left\{\phi_t : t \in T\right\}$ such that $F_{X_t}(x)=\phi_t(F_X(x))$ and $F_{Y_t}(x)=\phi_t(F_Y(x))$ for every $x \in \mathbb{R}$, where $F_X$ and $F_Y$ are two baseline distributions. In this work, it is often beneficial to see the class $\left\{\phi_t : t \in T\right\}$ as a map $t \mapsto \phi_t$ which assigns a distortion function $\phi_t$ to each element of the index set $T$. If $B_0=\emptyset$ (i.e. $F_X \leq_{\textnormal{st}} F_Y$), then we immediately have $F_{X_t} \leq_{\textnormal{st}} F_{Y_t}$, by nondecreasingness of $\phi_t$, for every $t \in T$. On the other hand if $B_0=B_2$, we similarly have $F_{Y_t} \leq_{\textnormal{st}} F_{X_t}$ for every $t \in T$. An interesting situation arises when $B_0$ is neither empty nor is equal to $B_2$, for in this case we cannot conclude anything on the stochastic order between $F_{X_t}$ and $F_{Y_t}$, based on the behaviour of $\phi_t$. The goal of this section is to analyze departure-based asymptotic stochastic order between $F_{X_t}$ and $F_{Y_t}$, as $t \to \infty$.\vs

Let $T=\mathbb{N}$. Suppose that $X_n \overset{P}{\to} b$ and $Y_n \overset{P}{\to} a$, as $n \to \infty$ with $a>b$. Also let $\max{\{EX_n^2,EY_n^2\}}<\infty$, for every $n \in \mathbb{N}$. It can be checked that $P\left\{X_n \leq Y_n\right\}$ can go arbitrarily close to $1$, for $n$ large enough. Counterintuitively, the next example shows that, under the given conditions, it is not necessarily true that $X_n \leq_{\textnormal{d-ast}} Y_n$, as $n \to \infty$.

\begin{example}\label{2-counterexample}
	Let $a>b>0$. Also, for every $n \in \mathbb{N}$, let the respective cdfs $F_{X_n}$ and $F_{Y_n}$, of $X_n$ and $Y_n$, be given by
	\begin{flalign*}
		&\hspace*{2.15cm} F_{X_n}(x)=
		\begin{cases*}
			\frac{1}{2n}e^{x+\frac{\sqrt{n}}{2}} & if  $-\infty < x \leq -\frac{\sqrt{n}}{2}$,\\
			x+\frac{1}{2n}+\frac{\sqrt{n}}{2} & if $-\frac{\sqrt{n}}{2} < x \leq -\frac{\sqrt{n}}{2}+\frac{1}{2n}$,\\
			\left(\frac{\frac{1}{n}}{b+\frac{\sqrt{n}}{2}-\frac{1}{2n}}\right)(x-b)+\frac{2}{n} & if  $-\frac{\sqrt{n}}{2}+\frac{1}{2n} < x \leq b$,\\
			(n-3)(x-b)+\frac{2}{n} & if  $b < x \leq b+\frac{1}{n}$,\\
			1-\frac{1}{n}e^{-(x-b-\frac{1}{n})} & if  $b+\frac{1}{n} < x < \infty$.
		\end{cases*}\\
		&\noindent \textnormal{and}&\\
		&\hspace*{2.2cm} F_{Y_n}(x)=
		\begin{cases*}
			\frac{1}{2n}e^{x+\sqrt{n}} & if  $-\infty < x \leq -\sqrt{n}$,\\
			x+\frac{1}{2n}+\sqrt{n} & if $-\sqrt{n} < x \leq -\sqrt{n}+\frac{1}{2n}$,\\
			\left(\frac{\frac{1}{n}}{a+\sqrt{n}-\frac{1}{2n}}\right)(x-a)+\frac{2}{n} & if  $-\sqrt{n}+\frac{1}{2n} < x \leq a$,\\
			(n-3)(x-a)+\frac{2}{n} & if  $a < x \leq a+\frac{1}{n}$,\\
			1-\frac{1}{n}e^{-(x-a-\frac{1}{n})} & if  $a+\frac{1}{n} < x < \infty$.
		\end{cases*}
	\end{flalign*}
	
	Let us write $A_{0,n}=\{u \in (0,1) : F_{Y_n}^{-1}(u)>F_{X_n}^{-1}(u)\}$. Now, it can be checked that
	\begin{align*}
		&EX_n^2<\infty,\, EY_n^2<\infty,\, X_n \overset{P}{\to} a \text{ and } Y_n \overset{P}{\to} b,\\
		&\int_{A_{0,n}} (F_{X_n}^{-1}(u)-F_{Y_n}^{-1}(u))^2 du>\frac{3}{32}, \text{ for every } n \geq 4,\\
		&\int_0^1 (F_{X_n}^{-1}(u)-F_{Y_n}^{-1}(u))^2du \leq \frac{1}{3}+(b-a)^2+\frac{b-a}{6}, \text{ for every } n \in \mathbb{N}.
	\end{align*}
	Setting $c=\frac{1}{3}+(b-a)^2+\frac{b-a}{6}$, we have $\varepsilon_{\mathcal{W}_2}(F_{X_n},F_{Y_n})>\frac{3}{32c}$, whenever $n \geq 4$. Thus, $\lim_{n \to \infty} \varepsilon_{\mathcal{W}_2}(F_{X_n},F_{Y_n}) \geq \frac{3}{32c}>0$. Hence, we cannot have $X_n \leq_{\textnormal{d-ast}} Y_n$, as $n \to \infty$. \hfill $\blacksquare$
\end{example}\vs

The main result of this section gives sufficient conditions on the asymptotic behaviour of the map $t \mapsto \phi_t$ and two baseline distributions $F_X$ and $F_Y$ to ensure asymptotic stochastic dominance of $\phi_t(F_X)$ over $\phi_t(F_Y)$ as $t \to \infty$. We assume that $F_X$ and $F_Y$ are continuous, strictly increasing and have finite second order moments. Let $A_0$, $B_0$, $A_2$ and $B_2$ be as defined in \cref{2-lemma-A_0-B_0}. Also, for $\gamma \in [0,1]$, let $c_{\gamma}$, $a_{\gamma}$, $d_{\gamma}$ and $b_{\gamma}$ be as defined in \cref{2-theorem-n-gamma-rate}. Further, we assume the following conditions:
\begin{enumerate}[label=\textnormal{(\arabic*)}]
	\item[(C1)]\label{2-item-discrete-phi-n-p} $\phi_t$ is strictly increasing in $[0,1]$, for every $t \geq t_0$, for some $t_0 \in T$.
	\item[(C2)]\label{2-item-discrete-phi-n-p-2} $\phi_t$ is continuously differentiable in $(0,1)$ for every $t \geq t_0$, for some $t_0 \in T$.
	\item[(C3)]\label{2-item-discrete-limit-p} For $i=1,2,\ldots,p$, there exists $\delta_i>0$ such that if $\gamma_i-\delta_i \leq a<b<\gamma_i$ or $\gamma_i<b<a \leq \gamma_i+\delta_i$, then $\phi_t'(a)/\phi_t'(b) \to 0$ as $t \to \infty$.
	\item[(C4)]\label{2-n-item-discrete-limit-p-convex-outside-epsilon} For any $\epsilon>0$, there exists $t_0 \in T$ such that, for every $t \geq t_0$, $\phi_t'$ is convex in $\left[0,\gamma_1-\epsilon\right], \cup \left[\gamma_1+\epsilon,\gamma_2-\epsilon\right] \cup \left[\gamma_2+\epsilon,\gamma_3-\epsilon\right] \cup \cdots \cup \left[\gamma_{p-1}+\epsilon,\gamma_p-\epsilon\right] \cup \left[\gamma_p+\epsilon,1\right]$.
\end{enumerate}\vs

Here \textnormal{(C1)} and \textnormal{(C2)} respectively provide us with overall nature and smoothness of $\phi_t'$, whereas \textnormal{(C3)} and \textnormal{(C4)} describe asymptotic behaviour of $\phi_t'$ near and away from each $\gamma_i$, respectively. As a consequence of these conditions, any random variable $U_t$ with support $[0,1]$ and cdf $\phi_t$, has the probability mass concentrated more and more around the points $\gamma_1,\gamma_2,\ldots,\gamma_p$, as $t \to \infty$. In this work, without loss of generality, we shall consider $\delta_1<(\gamma_2-\gamma_1)/2$, $\delta_i<\min{\left\{(\gamma_i-\gamma_{i-1})/2,(\gamma_{i+1}-\gamma_i)/2\right\}}$ for every $i \in \{2,3,\ldots,p-1\}$ and $\delta_p<(\gamma_p-\gamma_{p-1})/2$, so that for every $i \in \{1,2,\ldots,p-1\}$, $(\gamma_i-\delta_i,\gamma_i+\delta_i) \medcap [0,1]$ and $(\gamma_{i+1}-\delta_{i+1},\gamma_{i+1}+\delta_{i+1}) \medcap [0,1]$ do not intersect each other.\vs

Now, we state the main result of this section, under the setup and the notational scheme described above.
\begin{theorem}\label{2-theorem-general}
	Let $\left\{X_t : t \in T\right\}$ and $\left\{Y_t : t \in T\right\}$ be two stochastic processes. For every $t \in T$, let $X_t$ and $Y_t$ have the respective cdfs $F_{X_t}=\phi_t(F_X)$ and $F_{Y_t}=\phi_t(F_Y)$, where $\left\{\phi_t : t \in T\right\}$ is a sequence of distortion functions satisfying \textnormal{(C1)--(C4)}, and let $F_X$ and $F_Y$ be two baseline distributions, which are continuous, strictly increasing and have finite second order moments satisfying the following conditions for every $i \in \left\{1,2,\ldots,p\right\}$.
	\begin{enumerate}
		\item[\textnormal{(A1)}]\label{2-item-c-a} $\max{\left\{F_X(c_{\gamma_i}),\gamma_i-\delta_i\right\}}<F_X(a_{\gamma_i})$, whenever $A_2 \medcap (0,\gamma_i) \neq \phi$.
		\item[\textnormal{(A2)}]\label{2-item-b-d} $F_X(b_{\gamma_i})<\min{\left\{F_X(d_{\gamma_i}),\gamma_i+\delta_i\right\}}$, whenever $A_2 \medcap (\gamma_i,1) \neq \phi$.
	\end{enumerate}
	Then, $X_t \leq_{\textnormal{d-ast}} Y_t$, as $t \to \infty$. \hfill $\blacksquare$
\end{theorem}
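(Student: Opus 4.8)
The plan is to replay the proof of \cref{2-theorem-n-gamma-rate}, reading $\phi_t$ for $\phi_{n,\gamma_n}$, the finite set $\gamma_1<\cdots<\gamma_p$ for the single degeneracy point $\gamma$, and the piecewise convexity of \textnormal{(C4)} for the explicit unimodality of $\phi_{n,\gamma_n}'$. By \textnormal{(C1)}--\textnormal{(C2)}, $\phi_t$ is strictly increasing and continuously differentiable on $(0,1)$ for $t\geq t_0$, so $F_{X_t}^{-1}=F_X^{-1}\circ\phi_t^{-1}$ and $F_{Y_t}^{-1}=F_Y^{-1}\circ\phi_t^{-1}$ on $(0,1)$ (as in \cref{2-lemma-cauchy-normal-2.r}) and $\{v\in(0,1):F_{X_t}^{-1}(v)>F_{Y_t}^{-1}(v)\}=\phi_t(A_0)$ (as in \cref{2-lemma-A_0-A_0-n}); substituting $v=\phi_t(u)$ then gives, for $t\geq t_0$,
\[
\varepsilon_{\mathcal{W}_2}(F_{X_t},F_{Y_t})=\frac{\int_{A_0} h(u)\,\phi_t'(u)\,du}{\int_0^1 h(u)\,\phi_t'(u)\,du},\qquad h:=(F_X^{-1}-F_Y^{-1})^2,
\]
and it remains to show this ratio tends to $0$. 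Running, for each $i$, the opening computation in the proof of \cref{2-theorem-n-gamma-rate} (which uses only \cref{2-lemma-A_0-B_0} and the continuity and strict monotonicity of $F_X$) yields $A_0\medcap(0,\gamma_i)\subseteq(0,F_X(c_{\gamma_i}))$, $A_0\medcap(\gamma_i,1)\subseteq(F_X(d_{\gamma_i}),1)$, $A_2\medcap(0,\gamma_i)\subseteq(0,F_X(a_{\gamma_i}))$ and $A_2\medcap(\gamma_i,1)\subseteq(F_X(b_{\gamma_i}),1)$, with $F_X(c_{\gamma_i})<\gamma_i<F_X(d_{\gamma_i})$ (using the conventions $F_X(c_{\gamma_i})=0$, $F_X(d_{\gamma_i})=1$ when the relevant slices of $B_0$ are empty). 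Intersecting the containments for consecutive points gives $A_0\medcap(\gamma_{i-1},\gamma_i)\subseteq(F_X(d_{\gamma_{i-1}}),F_X(c_{\gamma_i}))$ for $2\leq i\leq p$, $A_0\medcap(0,\gamma_1)\subseteq(0,F_X(c_{\gamma_1}))$ and $A_0\medcap(\gamma_p,1)\subseteq(F_X(d_{\gamma_p}),1)$, so every portion of $A_0$ is trapped strictly inside one of the open gaps determined by the $\gamma_j$'s. Conditions \textnormal{(A1)}--\textnormal{(A2)} then say precisely: $\gamma_i-\delta_i<F_X(a_{\gamma_i})$ and $F_X(b_{\gamma_i})<\gamma_i+\delta_i$ make $A_2$ reach into the $\delta_i$-neighbourhood of $\gamma_i$ on either side (this supplies the denominator), while $F_X(c_{\gamma_i})<F_X(a_{\gamma_i})$ and $F_X(b_{\gamma_i})<F_X(d_{\gamma_i})$ force the nearest point of $A_0$ to $\gamma_i$ to be strictly farther than the nearest point of $A_2$ (this opens the gap between numerator and denominator).

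Next I would build the two estimates. Since the quantile functions are continuous, $A_2$ is open, so for each $i$ one selects nondegenerate closed intervals $J_i^{-}\subseteq A_2\medcap(\gamma_i-\delta_i,\gamma_i)$ and $J_i^{+}\subseteq A_2\medcap(\gamma_i,\gamma_i+\delta_i)$ that lie, on their side, strictly between $\gamma_i$ and the nearest point of $A_0$ — nonempty by the previous paragraph — on each of which $h$ is continuous and strictly positive, hence bounded below by a positive constant. Then fix $\epsilon>0$ small enough that \textnormal{(C4)} holds with this $\epsilon$ for every $t\geq t_0$ and that the localizing intervals from the previous paragraph together with all the $J_i^{\pm}$ lie inside the corresponding \textnormal{(C4)}-convexity subintervals. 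For the numerator, split $\int_{A_0}h\phi_t'$ over the gaps; on each gap $A_0$ sits in a compact subinterval of a convexity interval, and, since a convex function on a closed interval is dominated by the larger of its two endpoint values, iterating \textnormal{(C4)} bounds $\phi_t'$ there by $\phi_t'$ evaluated at points of the form $\gamma_j\pm\eta$ with $0<\eta<\delta_j$ (or at $0$, $1$) — obtained by reflecting the subinterval's endpoints onto the boundaries of the $\delta_j$-collars — times a constant independent of $t$ coming from $\sup h$ on the subinterval and its length. For the denominator, $\int_0^1 h\phi_t'\geq\int_{J_i^{\pm}}h\phi_t'\geq(\text{const})\cdot\phi_t'(m_i^{\pm})$ by Jensen's inequality and the convexity of $\phi_t'$ on $J_i^{\pm}$, where $m_i^{\pm}$ is the midpoint of $J_i^{\pm}$, a fixed point within $\delta_i$ of $\gamma_i$. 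Putting the two together, $\varepsilon_{\mathcal{W}_2}(F_{X_t},F_{Y_t})$ is at most a finite sum of terms $(\text{const})\cdot\phi_t'(a)/\phi_t'(b)$ in which $a$ and $b$ lie on the same side of, and within $\delta_j$ of, a common $\gamma_j$ with $a$ the farther; by \textnormal{(C3)} each such ratio tends to $0$, so $\varepsilon_{\mathcal{W}_2}(F_{X_t},F_{Y_t})\to 0$, i.e. $X_t\leq_{\textnormal{d-ast}}Y_t$ as $t\to\infty$.

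The delicate part is the convexity bookkeeping of the second paragraph. In the one-point proof one can simply say ``$\phi_{n,\gamma_n}'$ increases to the left and decreases to the right of $\gamma$'', so a piece of $A_0$ lying beyond a neighbourhood of $\gamma$ is tamed by monotonicity together with the explicit estimate of \cref{2-proposition-phi_n-r-2}; here \textnormal{(C4)} only grants convexity on intervals that omit an $\epsilon$-collar about each $\gamma_j$, so each bad subinterval of $A_0$ must first be transported, via convexity, to the boundary of some $\delta_j$-collar before \textnormal{(C3)} applies, and each resulting numerator value of $\phi_t'$ must remain paired with a denominator value $\phi_t'(m_j^{\pm})$ coming from an $h$-positive reference interval that \textnormal{(A1)}--\textnormal{(A2)} place strictly closer to $\gamma_j$. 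Making these pairings and the associated constants match up uniformly in $t$, and disposing of the degenerate and boundary cases — empty slices of $B_0$ or $B_2$, $F_X(c_{\gamma_i})=0$ or $F_X(d_{\gamma_i})=1$, portions of $A_0$ abutting $0$ or $1$, and $\gamma_i\in\{0,1\}$ — exactly as in the corresponding cases of \cref{2-theorem-n-gamma-rate}, is where the work lies; the reduction of the first paragraph and the localization are a routine transcription.
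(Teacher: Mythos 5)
Your proposal follows the paper's own proof in all essentials: the same change of variables reducing $\varepsilon_{\mathcal{W}_2}(F_{X_t},F_{Y_t})$ to $\int_{A_0}h\,\phi_t'/\int_0^1 h\,\phi_t'$, the same localization $A_0\medcap(\gamma_i,\gamma_{i+1})\subseteq(F_X(d_{\gamma_i}),F_X(c_{\gamma_{i+1}}))$, the same use of \textnormal{(A1)}--\textnormal{(A2)} to place an $h$-positive reference interval strictly between each piece of $A_0$ and the nearest $\gamma_j$, the same endpoint-maximization of the convex $\phi_t'$ on the numerator region, and the same final appeal to \textnormal{(C3)} for ratios $\phi_t'(a)/\phi_t'(b)$ with $a,b$ on the same side of and within $\delta_j$ of a common $\gamma_j$. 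The only cosmetic difference is that you lower-bound the denominator via Jensen's inequality at the midpoint of $J_i^{\pm}$ where the paper uses its \cref{2-lemma-convex-min} at an endpoint; both are valid.
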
\vs

To prove \cref{2-theorem-general}, we need the following lemmas, which are given under the setup of the theorem itself.

\begin{lemma}\label{2-lemma-cauchy-normal-2.r-general}
	Let $t \in T$. Then, $F_{X_t}^{-1}(v)=F_X^{-1}(\phi_t^{-1}(v))$, for every $v \in [0,1]$.
\end{lemma}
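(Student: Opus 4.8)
The plan is to prove the identity by unwinding the definition of the quantile (left-continuous inverse) function of $X_t$ and pushing the distortion $\phi_t$ through the defining infimum; this is the same computation that underlies \cref{2-lemma-cauchy-normal-2.r}, carried out here for a general $\phi_t$ in place of $\phi_{n,\alpha}$.

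First I would fix $t \in T$ with $t \geq t_0$, so that \textnormal{(C1)}--\textnormal{(C2)} are available, and record two preliminaries. Since $F_X$ is continuous and strictly increasing, $F_X(\mathbb{R}) = (0,1)$; hence $F_X(x) \in (0,1)$ for every $x \in \mathbb{R}$, the map $F_X^{-1}$ is the genuine inverse of $F_X$ on $(0,1)$, and $F_X^{-1}(0) = -\infty$, $F_X^{-1}(1) = +\infty$ by the stated conventions. Since $\phi_t$ is a distortion function, $\phi_t^{-1}(v) = \inf\{u \in [0,1] : \phi_t(u) \geq v\}$ is well defined for every $v \in [0,1]$, with $\phi_t^{-1}(0) = 0$ and, because $\phi_t$ is strictly increasing by \textnormal{(C1)}, $\phi_t^{-1}(1) = 1$.

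The crux is the equivalence: for all $x \in \mathbb{R}$ and $v \in (0,1)$, one has $\phi_t(F_X(x)) \geq v$ if and only if $F_X(x) \geq \phi_t^{-1}(v)$. The ``only if'' direction is immediate from the definition of the infimum. For ``if'', writing $u_0 = \phi_t^{-1}(v)$, I would split into the case $F_X(x) > u_0$ --- pick $u \in \{w : \phi_t(w) \geq v\}$ with $u_0 \leq u < F_X(x)$ and invoke monotonicity of $\phi_t$ --- and the case $F_X(x) = u_0$, in which $u_0 = F_X(x)$ lies in the open interval $(0,1)$ where $\phi_t$ is continuous by \textnormal{(C2)}, so taking $u_k \downarrow u_0$ with $\phi_t(u_k) \geq v$ gives $\phi_t(F_X(x)) = \phi_t(u_0) = \lim_k \phi_t(u_k) \geq v$. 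With the equivalence in hand, for $v \in (0,1)$ I would simply compute
\[
F_{X_t}^{-1}(v) = \inf\{x \in \mathbb{R} : \phi_t(F_X(x)) \geq v\} = \inf\{x \in \mathbb{R} : F_X(x) \geq \phi_t^{-1}(v)\} = F_X^{-1}(\phi_t^{-1}(v)),
\]
the last equality being the definition of $F_X^{-1}$. For $v = 0$ both sides equal $-\infty$, and for $v = 1$ strict increasingness of $\phi_t$ gives $\phi_t(F_X(x)) < \phi_t(1) = 1$ for every $x$, so $F_{X_t}^{-1}(1) = +\infty = F_X^{-1}(1) = F_X^{-1}(\phi_t^{-1}(1))$.

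The only step that is not pure bookkeeping is the ``if'' direction of the equivalence in the borderline case $F_X(x) = \phi_t^{-1}(v)$, which is exactly where continuity of $\phi_t$ is used; this is harmless because the range of $F_X$ is the open interval $(0,1)$, precisely where \textnormal{(C2)} guarantees continuity, so no hypothesis on the behaviour of $\phi_t$ at the endpoints $0$ and $1$ is needed.
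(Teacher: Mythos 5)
Your proof is correct and is exactly the definition-unwinding argument the paper has in mind (the authors omit the proof, referring to the analogous \cref{2-lemma-cauchy-normal-2.r}); the key equivalence $\phi_t(F_X(x))\geq v \iff F_X(x)\geq\phi_t^{-1}(v)$ is handled properly, including the borderline case $F_X(x)=\phi_t^{-1}(v)$ where continuity of $\phi_t$ on $(0,1)$ and the fact that $F_X(\mathbb{R})=(0,1)$ are genuinely needed. The only caveat, which you already flag, is that \textnormal{(C1)}--\textnormal{(C2)} are guaranteed only for $t\geq t_0$, so the identity is established for those $t$, which is all that is used downstream.
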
\vs

\begin{lemma}\label{2-lemma-A_0-A_0-n-distorted-distribution}
	For any $t \in T$, let us define $A_{0,t}=\{v \in (0,1) : F_{X_t}^{-1}(v)>F_{Y_t}^{-1}(v)\}$. Then $A_{0,t}=\phi_t(A_0)$, where $\phi_t(A_0)=\{\phi_t(u) : u \in A_0\}$.
\end{lemma}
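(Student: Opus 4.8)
The plan is to reduce the claim to \cref{2-lemma-cauchy-normal-2.r-general} via the observation that, in the regime $t\ge t_0$ (the only one relevant to the asymptotic conclusion of \cref{2-theorem-general}, and the one in which \textnormal{(C1)} and \textnormal{(C2)} hold), the distortion function $\phi_t$ is a strictly increasing homeomorphism of $[0,1]$ onto itself whose left-continuous inverse is the ordinary inverse function. Once this is in place, membership of $v$ in $A_{0,t}$ translates, via the substitution $u=\phi_t^{-1}(v)$, directly into membership of $u$ in $A_0$.

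First I would record the structural properties of $\phi_t$. By \textnormal{(C1)}, $\phi_t$ is strictly increasing on $[0,1]$; being a distortion function it satisfies $\phi_t(0)=0$ and $\phi_t(1)=1$, and together with \textnormal{(C2)} (continuity on $(0,1)$) and monotonicity this forces continuity on all of $[0,1]$. Hence $\phi_t\colon[0,1]\to[0,1]$ is a continuous, strictly increasing bijection with $\phi_t\big((0,1)\big)=(0,1)$, and its inverse $\phi_t^{-1}$ (which, by strict increasingness, coincides with the left-continuous inverse adopted as the paper's convention) is again a strictly increasing continuous bijection of $[0,1]$ onto itself carrying $(0,1)$ onto $(0,1)$. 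In particular $\phi_t(A_0)\subseteq(0,1)$, since $A_0\subseteq(0,1)$, and for $v\in(0,1)$ we have $\phi_t^{-1}(v)\in(0,1)$.

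Then I would run the equivalence. Fix $v\in(0,1)$ and put $u=\phi_t^{-1}(v)\in(0,1)$. Applying \cref{2-lemma-cauchy-normal-2.r-general} to each baseline gives $F_{X_t}^{-1}(v)=F_X^{-1}(u)$ and $F_{Y_t}^{-1}(v)=F_Y^{-1}(u)$, so that
\[
v\in A_{0,t}\iff F_{X_t}^{-1}(v)>F_{Y_t}^{-1}(v)\iff F_X^{-1}(u)>F_Y^{-1}(u)\iff u\in A_0\iff v=\phi_t(u)\in\phi_t(A_0),
\]
where the last equivalence uses that $\phi_t$ is injective on $(0,1)$ and $v=\phi_t(\phi_t^{-1}(v))$. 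As $v\in(0,1)$ was arbitrary, $A_{0,t}=\phi_t(A_0)$. There is no genuine obstacle here; the only point needing a line of care is the bijectivity of $\phi_t$ on $(0,1)$, which legitimises the substitution $u=\phi_t^{-1}(v)$ and guarantees it stays inside $(0,1)$. The argument is the exact analogue of that for \cref{2-lemma-A_0-A_0-n}, with $\phi_{n,\gamma_n}$ replaced by $\phi_t$.
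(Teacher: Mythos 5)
Your argument is correct and is exactly the route the paper intends: the authors state that the proof of this lemma is the analogue, with $\phi_{n,\gamma_n}$ replaced by $\phi_t$, of the (omitted) proof of \cref{2-lemma-A_0-A_0-n}, namely the substitution $u=\phi_t^{-1}(v)$ combined with \cref{2-lemma-cauchy-normal-2.r-general} and the injectivity of $\phi_t$. The only quibble is your claim that \textnormal{(C1)}--\textnormal{(C2)} force continuity of $\phi_t$ on all of $[0,1]$ (they do not --- a strictly increasing map with $\phi_t(0)=0$, $\phi_t(1)=1$ that is smooth on $(0,1)$ may still jump at the endpoints), but this is harmless: for any $v\in(0,1)$ outside the range of $\phi_t$ the left-continuous inverse $\phi_t^{-1}(v)$ lands in $\{0,1\}$, and one checks directly that such $v$ belongs to neither $A_{0,t}$ nor $\phi_t(A_0)$.
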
\vs

The proofs of \cref{2-lemma-cauchy-normal-2.r-general} and \cref{2-lemma-A_0-A_0-n-distorted-distribution} are similar to the respective proofs of \cref{2-lemma-cauchy-normal-2.r} and \cref{2-lemma-A_0-A_0-n}, and hence omitted. The next two lemmas follow from the defining properties of convex function.
\begin{lemma}\label{2-lemma-convex-max}
	Let $-\infty<a<b<\infty$ and let $f:\left[a,b\right] \to \mathbb{R}$ be a convex function. Then, $\max{\left\{f(x) : a \leq x \leq b\right\}}=\max{\left\{f(a),f(b)\right\}}$.
\end{lemma}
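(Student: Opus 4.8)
The plan is to invoke the defining inequality of convexity directly. Let $x \in [a,b]$ be arbitrary. Since $a < b$, we can write $x$ as a convex combination of the endpoints: set $\lambda = (b-x)/(b-a) \in [0,1]$, so that $x = \lambda a + (1-\lambda) b$. By convexity of $f$,
\[
f(x) = f(\lambda a + (1-\lambda) b) \leq \lambda f(a) + (1-\lambda) f(b).
\]
Now bound the right-hand side by replacing both $f(a)$ and $f(b)$ with $M := \max\{f(a),f(b)\}$: since $\lambda \geq 0$ and $1-\lambda \geq 0$, we get $\lambda f(a) + (1-\lambda) f(b) \leq \lambda M + (1-\lambda) M = M$. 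Hence $f(x) \leq M$ for every $x \in [a,b]$, which gives $\sup\{f(x) : a \leq x \leq b\} \leq M$.

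For the reverse inequality, note that $a$ and $b$ both lie in $[a,b]$, so $f(a)$ and $f(b)$ are both among the values $\{f(x) : a \leq x \leq b\}$; therefore $M = \max\{f(a),f(b)\} \leq \sup\{f(x) : a \leq x \leq b\}$. Combining the two inequalities, the supremum equals $M$ and in particular is attained (at $a$ or at $b$), so it is a maximum. This establishes $\max\{f(x) : a \leq x \leq b\} = \max\{f(a),f(b)\}$.

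There is essentially no obstacle here — the only point requiring the slightest care is confirming that the coefficients $\lambda$ and $1-\lambda$ are nonnegative so that the monotonicity step $\lambda f(a) + (1-\lambda) f(b) \leq M$ is valid; this is immediate from $x \in [a,b]$ and $a < b$. No appeal to continuity or differentiability of $f$ is needed, only the convexity inequality itself, which is exactly what the statement promises.
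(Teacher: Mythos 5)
Your proof is correct and is precisely the argument the paper has in mind: the paper omits a written proof, stating only that the lemma ``follows from the defining properties of convex function,'' and your direct application of the convexity inequality to $x=\lambda a+(1-\lambda)b$ is exactly that. Nothing to add.
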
\vs

\begin{lemma}\label{2-lemma-convex-min}
	Let $-\infty<a<c<b<\infty$ and let $f:\left[a,b\right] \to \mathbb{R}$ be a convex function such that $f(a)>f(c)>f(b)$. Then $\min{\left\{f(x) : a \leq x \leq c\right\}}=f(c)$. Again, if $f(a)<f(c)<f(b)$, then $\min{\left\{f(x) : c \leq x \leq b\right\}}=f(c)$.
\end{lemma}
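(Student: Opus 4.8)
The plan is to argue straight from the definition of convexity, by writing the interior point $c$ as a strict convex combination of an arbitrary point of the relevant subinterval and whichever endpoint carries the smaller value of $f$.

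First I would establish the first assertion. Fix $x \in [a,c)$. Since $x<c<b$, the point $c$ lies strictly between $x$ and $b$, so $c=\lambda x+(1-\lambda)b$ with $\lambda=(b-c)/(b-x)$, and one checks $0<\lambda<1$. Convexity then gives $f(c)\leq \lambda f(x)+(1-\lambda)f(b)$. Because $f(b)<f(c)$ by hypothesis and $1-\lambda>0$, the right-hand side is strictly smaller than $\lambda f(x)+(1-\lambda)f(c)$; subtracting $(1-\lambda)f(c)$ and dividing by $\lambda>0$ yields $f(c)<f(x)$. Hence $f(x)\geq f(c)$ for every $x\in[a,c]$, with equality exactly at $x=c$, so $\min\{f(x):a\leq x\leq c\}=f(c)$. (Incidentally, only $f(b)<f(c)$ together with $a<c<b$ is used here; the assumption $f(a)>f(c)$ is automatically compatible with the conclusion.)

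For the second assertion I would reduce to the first by reflection. Put $g(y)=f(-y)$ on $[-b,-a]$; then $g$ is convex, $-b<-c<-a$, and $g(-b)=f(b)>g(-c)=f(c)>g(-a)=f(a)$, so the case already proved applies to $g$ and gives $\min\{g(y):-b\leq y\leq -c\}=g(-c)=f(c)$. Undoing the substitution $x=-y$ turns this into $\min\{f(x):c\leq x\leq b\}=f(c)$, as required. Alternatively one can repeat the convex-combination computation directly, writing $c=\mu a+(1-\mu)x$ for $x\in(c,b]$ with $\mu=(x-c)/(x-a)\in(0,1)$ and using $f(a)<f(c)$.

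There is no real obstacle in this lemma; the only points needing a little care are verifying that $\lambda$ (respectively $\mu$) lies strictly in $(0,1)$ so the division is legitimate and the resulting inequality is strict, and treating the trivial endpoint case $x=c$ separately.
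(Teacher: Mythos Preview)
Your proof is correct and proceeds exactly along the lines the paper indicates: the paper does not give a detailed argument for this lemma, stating only that it ``follows from the defining properties of convex function,'' and your convex-combination argument is precisely such a derivation. Your observation that only the inequality $f(b)<f(c)$ (respectively $f(a)<f(c)$) is actually used is a nice sharpening.
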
\vs

\begin{proof}[Proof of \cref{2-theorem-general}]
	If $F_X=F_Y$ throughout, then so is $F_{X_t}=F_{Y_t}$. For the sake of nontriviality, let us assume that $F_X$ is not identical to $F_Y$. Now, there can be four possible cases:
	\begin{enumerate}[label=(\roman*)]
		\item\label{2-item-theorem-general-case-i} $0<\gamma_1<\gamma_2<\cdots<\gamma_{p-1}<\gamma_p<1$,
		\item\label{2-item-theorem-general-case-ii} $0<\gamma_1<\gamma_2<\cdots<\gamma_{p-1}<\gamma_p=1$,
		\item\label{2-item-theorem-general-case-iii} $0=\gamma_1<\gamma_2<\cdots<\gamma_{p-1}<\gamma_p<1$,
		\item\label{2-item-theorem-general-case-iv} $0=\gamma_1<\gamma_2<\cdots<\gamma_{p-1}<\gamma_p=1$.
	\end{enumerate}\vs
	
	We shall prove the result for case \ref{2-item-theorem-general-case-i}. The proofs for the other cases are similar. By \textnormal{(A1)} and \textnormal{(A2)}, we have $\sup{\left\{A_0 \cap (0,\gamma_i)\right\}}<\gamma_i<\inf{\left\{A_0 \cap (\gamma_i,1)\right\}}$, and hence, for every $i=1,2,\ldots,p$, we have $\gamma_i \notin A_0$. Let $\gamma_0=0$ and $\gamma_{p+1}=1$. Then
	\begin{align}
	\label{2-eq-partition-of-A_0}
	A_0=A_0 \medcap \left\{\cup_{i=0}^p (\gamma_i,\gamma_{i+1})\right\}=\cup_{i=0}^p \left\{A_0 \medcap (\gamma_i,\gamma_{i+1})\right\}.
	\end{align}
	
	Let us fix $i \in \left\{0,1,\ldots,p\right\}$. In the case where $A_0 \medcap (\gamma_i,\gamma_{i+1}) \neq \emptyset$, we have
	\begin{align}
	\label{2-eq-A_0-gamma_i-gamma_{i+1}}
	A_0 \medcap (\gamma_i,\gamma_{i+1})&=F_X(B_0) \medcap F_X(F_X^{-1}(\gamma_i),F_X^{-1}(\gamma_{i+1}))\nonumber\\
	&=F_X(B_0 \medcap (F_X^{-1}(\gamma_i),F_X^{-1}(\gamma_{i+1}))),
	\end{align}
	where the first equality follows from \cref{2-lemma-A_0-B_0} and injectivity of $F_X$, and the second equality follows from the fact that $F_X$ is nondecreasing and continuous. Now,
	\begin{align*}
	\sup{\left\{A_0 \medcap (\gamma_i,\gamma_{i+1})\right\}}&=\sup{\left\{F_X(B_0 \medcap (F_X^{-1}(\gamma_i),F_X^{-1}(\gamma_{i+1})))\right\}}\nonumber\\
	&\leq \sup{\left\{F_X(B_0 \medcap (-\infty,F_X^{-1}(\gamma_{i+1})))\right\}}\nonumber\\
	&=F_X(\sup{\left\{B_0 \medcap (-\infty,F_X^{-1}(\gamma_{i+1}))\right\}})\nonumber\\
	&=F_X(c_{\gamma_{i+1}}),
	\end{align*}
	where the first equality follows from \eqref{2-eq-A_0-gamma_i-gamma_{i+1}} and the second equality follows from nondecreasingness of $F_X$. Similarly, we have $\inf{\left\{A_0 \medcap (\gamma_i,\gamma_{i+1})\right\}}=F_X(d_{\gamma_i})$. Combining these observations, we have
	\begin{equation}
	\label{2-eq-A_0-gamma_i-gamma_{i+1}-subset}
	A_0 \medcap (\gamma_i,\gamma_{i+1}) \subseteq (F_X(d_{\gamma_i}),F_X(c_{\gamma_{i+1}})), \text{ for } i=0,1,\ldots,p.
	\end{equation}
	Now, we have
	\begin{align}
	\label{2-eq-varepsilon-partition}
	\varepsilon_{\mathcal{W}_2}(F_{X_t},F_{Y_t})&=\frac{\int_{A_{0,t}} (F_{X_t}^{-1}(v)-F_{Y_t}^{-1}(v))^2 dv}{\int_0^1 (F_{X_t}^{-1}(v)-F_{Y_t}^{-1}(v))^2 dv}\nonumber\\
	&=\frac{\int_{A_{0,t}} (F_X^{-1}(\phi_t^{-1}(v))-F_Y^{-1}(\phi_t^{-1}(v)))^2 dv}{\int_0^1 (F_X^{-1}(\phi_t^{-1}(v))-F_Y^{-1}(\phi_t^{-1}(v)))^2 dv}\nonumber\\
	&=\frac{\int_{A_{0}} (F_X^{-1}(u)-F_Y^{-1}(u))^2 \phi_t'(u)du}{\int_0^1 (F_X^{-1}(u)-F_Y^{-1}(u))^2 \phi_t'(u) du}\nonumber\\
	&=\sum_{i=0}^p \frac{\int_{A_0 \medcap (\gamma_i,\gamma_{i+1})} (F_X^{-1}(u)-F_Y^{-1}(u))^2 \phi_t'(u)du}{\int_0^1 (F_X^{-1}(u)-F_Y^{-1}(u))^2 \phi_t'(u) du},
	\end{align}
	where $A_{0,t}=\{v \in (0,1) : F_{X_t}^{-1}(v)>F_{Y_t}^{-1}(v)\}$, the second equality is due to \cref{2-lemma-cauchy-normal-2.r-general}, the third equality follows by taking the transformation $\phi_t^{-1}(v)=u$ and using \cref{2-lemma-A_0-A_0-n-distorted-distribution} and the fourth equality follows from \eqref{2-eq-partition-of-A_0}. If $A_0 \medcap (\gamma_i,\gamma_{i+1})=\phi$, then the $(i+1)$th term does not contribute to $\varepsilon_{\mathcal{W}_2}(F_{Y_t},F_{X_t})$. On the other hand, if $A_0 \medcap (\gamma_i,\gamma_{i+1}) \neq \phi$, then by \eqref{2-eq-A_0-gamma_i-gamma_{i+1}-subset},
	\begin{equation*}
	\frac{\int_{A_0 \medcap (\gamma_i,\gamma_{i+1})} (F_X^{-1}(u)-F_Y^{-1}(u))^2 \phi_t'(u)du}{\int_0^1 (F_X^{-1}(u)-F_Y^{-1}(u))^2 \phi_t'(u) du} \leq \frac{\int_{F_X(d_{\gamma_i})}^{F_X(c_{\gamma_{i+1}})} (F_X^{-1}(u)-F_Y^{-1}(u))^2 \phi_t'(u)du}{\int_0^1 (F_X^{-1}(u)-F_Y^{-1}(u))^2 \phi_t'(u) du}.
	\end{equation*}
	Now, there can be two cases:\vs
	
	\ni{\bf Case $\mathbf{(i)}$:} $\phi_t'(F_X(d_{\gamma_i})) \leq \phi_t'(F_X(c_{\gamma_{i+1}}))$. In this case, for large $t$, we have
	\begin{align*}
	&\phantom{\,\,\,\,\,\,\,\,}\frac{\int_{F_X(d_{\gamma_i})}^{F_X(c_{\gamma_{i+1}})} (F_X^{-1}(u)-F_Y^{-1}(u))^2 \phi_t'(u)du}{\int_0^1 (F_X^{-1}(u)-F_Y^{-1}(u))^2 \phi_t'(u) du}\\
	&\leq \frac{\int_{F_X(d_{\gamma_i})}^{F_X(c_{\gamma_{i+1}})} (F_X^{-1}(u)-F_Y^{-1}(u))^2 \phi_t'(u)du}{\int_{F_X(a_{\gamma_{i+1}})-\epsilon}^{F_X(a_{\gamma_{i+1}})-\frac{\epsilon}{2}} (F_X^{-1}(u)-F_Y^{-1}(u))^2 \phi_t'(u) du},\\
	&\leq \frac{\phi_t'(F_X(c_{\gamma_{i+1}})) \int_{F_X(d_{\gamma_i})}^{F_X(c_{\gamma_{i+1}})} (F_X^{-1}(u)-F_Y^{-1}(u))^2 du}{\phi_t'(F_X(a_{\gamma_{i+1}})-\epsilon) \int_{F_X(a_{\gamma_{i+1}})-\epsilon}^{F_X(a_{\gamma_{i+1}})-\frac{\epsilon}{2}} (F_X^{-1}(u)-F_Y^{-1}(u))^2 du}\\
	&\leq \frac{\phi_t'(\max{\left\{F_X(c_{\gamma_{i+1}}),\gamma_{i+1}-\delta_{i+1}\right\}}) \int_{F_X(d_{\gamma_i})}^{F_X(c_{\gamma_{i+1}})} (F_X^{-1}(u)-F_Y^{-1}(u))^2 du}{\phi_t'(F_X(a_{\gamma_{i+1}})-\epsilon) \int_{F_X(a_{\gamma_{i+1}})-\epsilon}^{F_X(a_{\gamma_{i+1}})-\frac{\epsilon}{2}} (F_X^{-1}(u)-F_Y^{-1}(u))^2 du},
	\end{align*}
	where $\epsilon<F_X(a_{\gamma_{i+1}})-(\gamma_{i+1}-\delta_{i+1})$ is an arbitrarily small positive real number, the second inequality follows from \textnormal{(C4)}, \cref{2-lemma-convex-max} and \cref{2-lemma-convex-min}. Note that there exists $t_1,t_2 \in T$ such that $t \geq t_1 \Rightarrow \phi_t'(F_X(a_{\gamma_{i+1}})-\epsilon)/\phi_t'(F_X(a_{\gamma_{i+1}})-\epsilon/2)<1$ and $t \geq t_2 \Rightarrow \phi_t'(\gamma_{i+1}-\delta_{i+1})/\phi_t'(F_X(a_{\gamma_{i+1}})-\epsilon)<1$. Hence, for every $t \geq t_0=\max{\left\{t_1,t_2\right\}}$, we have $\phi_t'(\gamma_{i+1}-\delta_{i+1})<\phi_t'(F_X(a_{\gamma_{i+1}})-\epsilon)<\phi_t'(F_X(a_{\gamma_{i+1}})-\epsilon/2)$. The last inequality follows from \cref{2-lemma-convex-max}. On the other hand, if $F_X(c_{\gamma_{i+1}}) \geq \gamma_{i+1}-\delta_{i+1}$, then the inequality becomes an equality.\vs
	
	\ni{\bf Case $\mathbf{(ii)}$:} $\phi_t'(F_X(d_{\gamma_i}))>\phi_t'(F_X(c_{\gamma_{i+1}}))$. Proceeding similarly as in Case $(i)$, we have
	\begin{align*}
	&\phantom{\,\,\,\,\,\,\,\,}\frac{\int_{F_X(d_{\gamma_i})}^{F_X(c_{\gamma_{i+1}})} (F_X^{-1}(u)-F_Y^{-1}(u))^2 \phi_t'(u)du}{\int_0^1 (F_X^{-1}(u)-F_Y^{-1}(u))^2 \phi_t'(u) du}\\
	&\leq \frac{\phi_t'(\min{\left\{F_X(d_{\gamma_i}),\gamma_i+\delta_i\right\}})\int_{F_X(d_{\gamma_i})}^{F_X(c_{\gamma_{i+1}})} (F_X^{-1}(u)-F_Y^{-1}(u))^2 du}{\phi_t'(F_X(b_{\gamma_i})+\epsilon)\int_{F_X(b_{\gamma_i})+\frac{\epsilon}{2}}^{F_X(b_{\gamma_i})+\epsilon} (F_X^{-1}(u)-F_Y^{-1}(u))^2 du}.
	\end{align*}
	
	Substituting each of the ratios of integrals in the right hand side of \eqref{2-eq-varepsilon-partition} and combining the two cases, we obtain
	\begin{equation}\label{2-eq-varepsilon-partition-upper-bound}
	\varepsilon_{\mathcal{W}_2}(F_{X_t},F_{Y_t}) \leq \sum_{i=0}^p k_{\epsilon,\gamma_i,\gamma_{i+1}} Z_{t,\epsilon,\gamma_i,\gamma_{i+1}},
	\end{equation}
	where, for $i=0,1,\ldots,p$,
	\[
	k_{\epsilon,\gamma_i,\gamma_{i+1}}=\max{\left\{\frac{\int_{F_X(d_{\gamma_i})}^{F_X(c_{\gamma_{i+1}})} (F_X^{-1}(u)-F_Y^{-1}(u))^2 du}{\int_{\gamma_i+\frac{\epsilon}{2}}^{\gamma_i+\epsilon} (F_X^{-1}(u)-F_Y^{-1}(u))^2 du},\frac{\int_{F_X(d_{\gamma_i})}^{F_X(c_{\gamma_{i+1}})} (F_X^{-1}(u)-F_Y^{-1}(u))^2 du}{\int_{\gamma_{i+1}-\epsilon}^{\gamma_{i+1}-\frac{\epsilon}{2}} (F_X^{-1}(u)-F_Y^{-1}(u))^2 du}\right\}},
	\]
	if $A_0 \medcap (\gamma_i,\gamma_{i+1}) \neq \phi$ and equal to $0$ otherwise, and
	\[
	Z_{t,\epsilon,\gamma_i,\gamma_{i+1}}=\max{\left\{\frac{\phi_t'(\min{\left\{F_X(d_{\gamma_i}),\,\gamma_i+\delta_i\right\}})}{\phi_t'(\gamma_i+\epsilon)},\frac{\phi_t'(\max{\left\{F_X(c_{\gamma_{i+1}}),\,\gamma_{i+1}-\delta_{i+1}\right\}})}{\phi_t'(\gamma_{i+1}-\epsilon)}\right\}},
	\]
	if $A_0 \medcap (\gamma_i,\gamma_{i+1}) \neq \phi$ and equal to $0$ otherwise. It follows that $0 \leq k_{\epsilon,\gamma_i,\gamma_{i+1}}<\infty$. If $A_0 \medcap (\gamma_i,\gamma_{i+1})=\phi$, then $Z_{t,\epsilon,\gamma_i,\gamma_{i+1}}=0$, for every $t \in T$. Let us now assume that $A_0 \medcap (\gamma_i,\gamma_{i+1}) \neq \phi$. Note that $\gamma_i<\gamma_i+\epsilon<\min{\left\{F_X(d_{\gamma_i}),\,\gamma_i+\delta_i\right\}} \leq \gamma_i+\delta_i$ and $\gamma_{i+1}-\delta_{i+1}<\max{\left\{\gamma_{i+1}-\delta_{i+1},\,F_X(c_{\gamma_{i+1}})\right\}}<\gamma_{i+1}-\epsilon<\gamma_{i+1}$, for small enough $\epsilon>0$. Thus, by \textnormal{(C3)}, $Z_{t,\epsilon,\gamma_i,\gamma_{i+1}} \to 0$, as $t \to \infty$. The proof now follows by taking limit, as $t \to \infty$ on both sides of \eqref{2-eq-varepsilon-partition-upper-bound}.
\end{proof}\vs

The following proposition is not only useful in the rest of the paper but also provides intuition on asymptotic behaviour of $\phi_t'$, as $t \to \infty$.
\begin{proposition}\label{2-lemma-phi-t-derivative-vanishes-away-from-gamma-i}
	Let $\left\{\phi_t : t \in T\right\}$ be a class of distortion functions, satisfying \textnormal{(C1)--(C4)}. Then $\phi_t'(u) \to 0$, as $t \to \infty$, whenever $u \in [0,1] \setminus \left\{\gamma_1,\gamma_2,\ldots,\gamma_p\right\}$.
\end{proposition}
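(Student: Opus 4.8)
The plan is to play off two structural features of $\phi_t'$ against each other: the normalisation $\int_0^1\phi_t'(u)\,du\le 1$, which simply records that $\phi_t$ is a distortion function, and the ``peaking'' of $\phi_t'$ near each $\gamma_i$ built into \textnormal{(C3)}. The normalisation follows from \textnormal{(C2)} and the fundamental theorem of calculus: for $0<a<b<1$ one has $\int_a^b\phi_t'=\phi_t(b)-\phi_t(a)$, and letting $a\downarrow 0$, $b\uparrow 1$, using monotone convergence together with $0=\phi_t(0)\le\phi_t(0+)$ and $\phi_t(1-)\le\phi_t(1)=1$, gives $\int_0^1\phi_t'\le 1$; also $\phi_t'\ge 0$ by monotonicity. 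The convexity in \textnormal{(C4)} will then be used, through \cref{2-lemma-convex-max}, to propagate control of $\phi_t'$ from points near the $\gamma_i$'s to the rest of $[0,1]$.

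The core step is the following claim: if $i\in\{1,\ldots,p\}$ and $v$ lies in the punctured window $((\gamma_i-\delta_i,\gamma_i+\delta_i)\cap[0,1])\setminus\{\gamma_i\}$, then $\phi_t'(v)\to 0$. I would argue by contradiction. Take $v=\gamma_i+\eta$ with $0<\eta<\delta_i$ (the case $v=\gamma_i-\eta$ is symmetric, using the left half of \textnormal{(C3)}). If $\phi_t'(v)\not\to 0$, pass to a subsequence $t_k\to\infty$ with $\phi_{t_k}'(v)\ge c>0$. For each fixed $b\in(\gamma_i,v)$ we have $\gamma_i<b<v\le\gamma_i+\delta_i$, so \textnormal{(C3)} gives $\phi_t'(v)/\phi_t'(b)\to 0$, and since $\phi_{t_k}'(v)\ge c$ this forces $\phi_{t_k}'(b)\to\infty$. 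Thus $\phi_{t_k}'\to\infty$ pointwise on the whole interval $(\gamma_i,v)$, which has positive length, and Fatou's lemma gives $\liminf_k\int_{\gamma_i}^{v}\phi_{t_k}'\ge\int_{\gamma_i}^{v}\liminf_k\phi_{t_k}'=\infty$; but $\int_{\gamma_i}^{v}\phi_{t_k}'=\phi_{t_k}(v)-\phi_{t_k}(\gamma_i)\le 1$, a contradiction.

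Now fix $u_0\in[0,1]\setminus\{\gamma_1,\ldots,\gamma_p\}$ and put $\gamma_0=0$, $\gamma_{p+1}=1$, so $u_0\in(\gamma_j,\gamma_{j+1})$ for a unique $j$. Choose $\epsilon>0$ small enough that $u_0$ is an interior point of the corresponding interval $I_\epsilon$ among $[0,\gamma_1-\epsilon],[\gamma_1+\epsilon,\gamma_2-\epsilon],\ldots,[\gamma_p+\epsilon,1]$ from \textnormal{(C4)}, and also $\epsilon<\delta_j$ when $1\le j\le p$ and $\epsilon<\delta_{j+1}$ when $1\le j+1\le p$. By \textnormal{(C4)}, for all large $t$ the function $\phi_t'$ is convex on $I_\epsilon$, so \cref{2-lemma-convex-max} yields $\phi_t'(u_0)\le\max\{\phi_t'(L),\phi_t'(R)\}$, where $L<R$ are the endpoints of $I_\epsilon$. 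Every endpoint of the form $\gamma_i\pm\epsilon$ with $i\in\{1,\ldots,p\}$ lies in a punctured $\delta_i$-window, so by the core step $\phi_t'$ there tends to $0$; hence $\phi_t'(u_0)\to 0$, except possibly when an endpoint of $I_\epsilon$ is $0$ (i.e. $u_0\in(0,\gamma_1)$ with $\gamma_1>0$) or $1$. Those remaining cases are handled in the same way once one observes that a nonnegative convex $f$ on $[a,b]$ with $\int_a^b f\le 1$ satisfies $f(x)\le 1/\min\{x-a,\,b-x\}$ (since $f$ is monotone on each side of its minimiser): this gives local boundedness of $\phi_t'$ on the interior of $(0,\gamma_1)$ and then lets the same convex-maximum reduction, now aimed at $\gamma_1$, go through.

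The step I expect to be hardest is the core claim: \textnormal{(C3)} only constrains ratios at fixed pairs $(a,b)$, so one must first manufacture a genuine pointwise blow-up of $\phi_{t_k}'$ on an interval of positive length and only then cash it against the $L^1$-bound via Fatou; after that, the passage to an arbitrary $u_0$ is routine bookkeeping with \cref{2-lemma-convex-max}.
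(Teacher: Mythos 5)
Your core step and the reduction on the interior intervals $(\gamma_j,\gamma_{j+1})$ with $1\leq j\leq p-1$ are both correct, and the core step takes a genuinely different route from the paper. The paper's Cases 1 and 2 also argue by contradiction from \textnormal{(C3)} and the bound $\int_0^1\phi_t'(z)\,dz\leq 1$, but they fix two auxiliary points between $\gamma_i$ and the target point, use \textnormal{(C3)} to order the three derivative values, and then invoke the convexity of $\phi_t'$ through \cref{2-lemma-convex-min} to bound the integral below by a length times a value of $\phi_t'$ that blows up. Your Fatou argument reaches the same contradiction after first forcing pointwise divergence of $\phi_{t_k}'$ on a whole subinterval, and it does not use \textnormal{(C4)} at all in this step, which is a small but real simplification. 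The convex-maximum reduction for a point lying between two consecutive windows is exactly the paper's Case 3.

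The boundary intervals are a genuine gap, and it is not merely a bookkeeping one. For $u_0\in[0,\gamma_1-\delta_1)$ (symmetrically for $(\gamma_p+\delta_p,1]$), \cref{2-lemma-convex-max} controls $\phi_t'(u_0)$ by the larger of the two endpoint values of a convexity interval containing $u_0$, and only the endpoint sitting inside the $\delta_1$-window of $\gamma_1$ is known to vanish; the other endpoint, at or near $0$, is controlled only through your estimate $f(x)\leq 1/\min\{x-a,\,b-x\}$, which gives uniform boundedness, not convergence. So the reduction ``aimed at $\gamma_1$'' yields only that $\phi_t'(u_0)$ stays bounded. Worse, the gap cannot be closed from \textnormal{(C1)--(C4)} alone: take $T=\mathbb{N}$, $p=1$, $\gamma_1=1$ and $\phi_t'(x)=\frac{8}{9}\left(tx^{t-1}+\max\{0,\frac12-x\}\right)$ for $t\geq 3$. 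Then $\phi_t$ is a distortion function satisfying \textnormal{(C1)} and \textnormal{(C2)}, \textnormal{(C4)} holds because both summands are convex on $[0,1]$, and \textnormal{(C3)} holds with $\delta_1=1/4$ since the second summand vanishes on $[3/4,1)$ and $(a/b)^{t-1}\to 0$; yet $\phi_t'(1/4)\to 2/9\neq 0$. Hence on $[0,\gamma_1-\delta_1)$ the conclusion is not a consequence of the stated hypotheses. In fairness, the paper's own proof has the same defect: its Cases 4 and 5 are dispatched by appeal to ``the same arguments'' as Cases 2 and 3, but the window argument needs \textnormal{(C3)}, which is silent below $\gamma_1-\delta_1$, and the convex-maximum argument needs both anchors to vanish, one of which would have to be $0$ or $1$. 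Your proof is therefore complete exactly where the paper's is and incomplete exactly where the paper's is; a correct statement either restricts $u$ to $[\gamma_1-\delta_1,\gamma_p+\delta_p]\setminus\{\gamma_1,\ldots,\gamma_p\}$ or adds a hypothesis controlling $\phi_t'$ near $0$ and $1$, as in fact holds in the paper's applications, where the conclusion is verified directly.
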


\begin{proof}
	Let us fix $i \in \left\{1,2,\ldots,p\right\}$ and assume that $\gamma_i<u<\gamma_{i+1}$. The case when $0 \leq u<\gamma_1$ or $\gamma_p<u \leq 1$ will be considered afterwards. For every $i \in \left\{1,2,\ldots,p\right\}$, let $\delta_i$ be as in \textnormal{(C3)}. Now, we have the following cases.\vs
	
	{\bf Case 1.} $\gamma_i<u \leq \gamma_i+\delta_i$. Suppose, for the sake of contradiction, that $\phi_t'(u) \nrightarrow 0$, as $t \to \infty$. Hence, there exists $\epsilon>0$ and a sequence $\left\{t_n : n \in \mathbb{N}\right\}$ such that $t_n \to \infty$, as $n \to \infty$ and $\phi_{t_n}'(u)>\epsilon$ for every $n \in \mathbb{N}$. Let us fix $v \in (\gamma_i,u)$ and $w \in (\gamma_i,v)$. By \textnormal{(C3)}, $\lim_{n \to \infty} \phi_{t_n}'(u)/\phi_{t_n}'(v)=0$ and $\lim_{n \to \infty} \phi_{t_n}'(v)/\phi_{t_n}'(w)=0$. Thus, there exists $N_1 \in \mathbb{N}$ and $N_2 \in \mathbb{N}$ such that $\phi_{t_n}'(u)/\phi_{t_n}'(v)<1$, whenever $n \geq N_1$ and $\phi_{t_n}'(v)/\phi_{t_n}'(w)<1$, whenever $n \geq N_2$. Again, for any $\alpha>0$, there exists $N_3 \in \mathbb{N}$ such that $n \geq N_3$ implies $\phi_{t_n}'(u)/\phi_{t_n}'(v)<\alpha$, i.e., $\phi_{t_n}'(v)>\phi_{t_n}'(u)/\alpha>\epsilon/\alpha$. Since $w<v<u$ by choice and $\phi_{t_n}'(w)>\phi_{t_n}'(v)>\phi_{t_n}'(u)$, for every $n \geq \max{\left\{N_1,N_2\right\}}$, it follows from \cref{2-lemma-convex-min} that $\min{\left\{\phi_{t_n}'(z) : w \leq z \leq v\right\}}=\phi_{t_n}'(v)$, for every $n \geq \max{\left\{N_1,N_2\right\}}$. Then, for every $n \geq \max{\left\{N_1,N_2,N_3\right\}}$, we have
	\[
	\int_w^v \phi_{t_n}'(z) dz \geq (v-w) \phi_{t_n}'(v)>\frac{(v-w)\epsilon}{\alpha}.
	\]
	This is a contradiction since the integral in the left hand side is equal to $\phi_{t_n}(v)-\phi_{t_n}(w)$, which can at most be $1$ and the ratio in the right hand side can be made arbitrarily large by choosing $\alpha>0$ to be small enough. Hence we must have $\lim_{t \to \infty} \phi_t'(u)=0$.\vs
	
	{\bf Case 2.} $\gamma_{i+1}-\delta_{i+1} \leq u<\gamma_{i+1}$. By a similar line of argument, based on contradiction, as in the previous case, one can show that $\lim_{t \to \infty} \phi_t'(u)=0$.\vs
	
	{\bf Case 3.} $\gamma_i+\delta_i<u<\gamma_{i+1}-\delta_{i+1}$. It follows from \cref{2-lemma-convex-max} that, for every $t \geq t_0$, we have $\phi_t'(u) \leq \max{\left\{\phi_t'(\gamma_i+\delta_i),\phi_t'(\gamma_{i+1}-\delta_{i+1})\right\}}$. From the previous two cases, we have $\phi_t'(\gamma_i+\delta_i) \to 0$ and $\phi_t'(\gamma_{i+1}-\delta_{i+1}) \to 0$, as $t \to \infty$. Thus, $\lim_{t \to \infty} \max{\left\{\phi_t'(\gamma_i+\delta_i),\phi_t'(\gamma_{i+1}-\delta_{i+1})\right\}}=0$. Consequently $\lim_{t \to \infty} \phi_t'(u)=0$.\vs
	
	{\bf Case 4.} $0 \leq u<\gamma_1$. This case can be treated similarly using the same arguments as in Case 2 (for $0 \leq u<\gamma_1-\delta_1$) and Case 3 (for $\gamma_1-\delta_1 \leq u<\gamma_1$).\vs
	
	{\bf Case 5.} $\gamma_p<u \leq 1$. This case can be dealt with similarly using the arguments given in Case 1 (for $\gamma_p<u \leq \gamma_p+\delta_p$) and Case 2 (for $\gamma_p+\delta_p<u \leq 1$).
\end{proof}\vs

\section{Applications}\label{2-section-applications}

\subsection*{Mixtures of order statistics}\label{2-subsection-mixture}

Let $F_1,F_2,\ldots,F_p$ be $p$ cdfs. Consider the finite mixture of these cdfs $F(x)=\sum_{i=1}^p \alpha_i F_i(x)$, where $\alpha_i \geq 0$ for every $i \in \{1,2,\ldots,p\}$ and $\sum_{i=1}^p \alpha_i=1$. Mixture distributions have significant applications in statistics, text classification, speech recognition, disease mapping, meta analysis among other fields of study. In particular, mixtures of order statistics have attracted special interests in various contexts. Let us consider a coherent system with $n$ components, with respective lifetimes $X_1,X_2,\ldots,X_n$. Then the cdf of system lifetime, $T$, have the representation, given by
\[
F_T(x)=\sum_{i=1}^n s_i F_{X_{i:n}}(x),
\]
for every $x \in \mathbb{R}$, where $(s_1,s_2,\ldots,s_n)$ is the signature vector, which provides us with distribution-free characterization of the design of the system (see \citet{S_2007}). In the present section, we explore asymptotic stochastic comparison of finite mixtures of order statistics, generated from two homogeneous samples.\vs

Let $F_X$ and $F_Y$ be two continuous, strictly increasing cdfs and let $p \in \mathbb{N}$. Assume that $\boldsymbol{\gamma}=(\gamma_1,\gamma_2,\ldots,\gamma_p) \in [0,1]^p$ and $\boldsymbol{\gamma}_n=(\gamma_{1,n},\gamma_{2,n},\ldots,\gamma_{p,n}) \in [0,1]^p$, where $\gamma_{i,n} \to \gamma_i$ with the rate of convergence $\left\vert \gamma_{i,n}-\gamma_i \right\vert=O(1/n)$, for $i=1,2,\ldots,p$. Also, let $\boldsymbol{\alpha}=(\alpha_1,\alpha_2,\ldots,\alpha_p) \in (0,1)^p$ with $\sum_{i=1}^p \alpha_i=1$. Let us define the mixtures of order statistics (characterized by $\boldsymbol{\alpha}$ and $\boldsymbol{\gamma}_n$) based on the respective random samples (of size $n$) from $F_X$ and $F_Y$ as
\[
F_{X,n,\boldsymbol{\alpha},\boldsymbol{\gamma}_n}(t)=\sum_{i=1}^p \alpha_i F_{X_{(1+[(n-1)\gamma_{i,n}])}}(t),\,\, F_{Y,n,\boldsymbol{\alpha},\boldsymbol{\gamma}_n}(t)=\sum_{i=1}^p \alpha_i F_{Y_{(1+[(n-1)\gamma_{i,n}])}}(t),
\]
for every $t \in \mathbb{R}$. We wish to examine the asymptotic stochastic order between the random variables with respective cdfs $F_{X,n,\boldsymbol{\alpha},\boldsymbol{\gamma}_n}$ and $F_{Y,n,\boldsymbol{\alpha},\boldsymbol{\gamma}_n}$. Now, we have
\[
F_{X,n,\boldsymbol{\alpha},\boldsymbol{\gamma}_n}(t)=\sum_{i=1}^p \alpha_i \left[\sum_{j=1+[(n-1)\gamma_{i,n}]}^n \binom{n}{j} (F_X(t))^j (1-F_X(t))^{n-j}\right],
\]
for every $t \in \mathbb{R}$. We can rewrite it as $F_{X,n,\boldsymbol{\alpha},\boldsymbol{\gamma}_n}(t)=\phi_{n,\boldsymbol{\alpha},\boldsymbol{\gamma}_n}(F_X(t))$, where
\begin{equation}
\label{2-eq-mixture-phi-n-alpha-gamma-n}
\phi_{n,\boldsymbol{\alpha},\boldsymbol{\gamma}_n}(u)=\sum_{i=1}^p \alpha_i\,\phi_{n,\gamma_{i,n}}(u), \text{ for every } u \in [0,1].
\end{equation}
Under the above setup, we have the following results which are needed to prove the main result, \cref{2-theorem-mixture}. The next lemma is required to prove \cref{2-proposition-item-discrete-limit-p-mixture-delta-i-ratio}.
\begin{lemma}
	\label{2-lemma-item-discrete-limit-p-mixture-delta-i}
	For every $i \in \left\{1,2,\ldots,p\right\}$, there exists $\delta_i>0$ and $N \in \mathbb{N}$ such that $\phi_{n,\gamma_{i,n}}'(u) \geq \phi_{n,\gamma_{j,n}}'(u)$, for every $u \in (\gamma_i-\delta_i,\gamma_i+\delta_i) \medcap [0,1]$, $j \in \left\{1,2,\ldots,p\right\}$ and $n \geq N$.
\end{lemma}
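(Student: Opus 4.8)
The starting point is the explicit formula \eqref{2-eq-n-gamma-phi-n-first-derivative}: writing $m_k=m_k(n)=[(n-1)\gamma_{k,n}]$ and $b_m(u)=\binom{n-1}{m}u^m(1-u)^{n-1-m}$, we have $\phi_{n,\gamma_{k,n}}'(u)=n\binom{n-1}{m_k}u^{m_k}(1-u)^{n-1-m_k}=n\,b_{m_k}(u)=nP\{B_{n-1,u}=m_k\}$. Thus the claimed inequality $\phi_{n,\gamma_{i,n}}'(u)\geq\phi_{n,\gamma_{j,n}}'(u)$ is equivalent to $P\{B_{n-1,u}=m_i\}\geq P\{B_{n-1,u}=m_j\}$: for $u$ near $\gamma_i$, the binomial$(n-1,u)$ mass at $m_i$ must dominate that at $m_j$. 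From $\lvert\gamma_{k,n}-\gamma_k\rvert=O(1/n)$ there is a constant $M$ with $\lvert m_k-(n-1)\gamma_k\rvert\leq M$ for all $k$ and $n$; since the binomial mass function is unimodal with mode near $(n-1)u\approx(n-1)\gamma_i$, the index $m_i$ sits within $O(n\delta_i)+O(1)$ of the mode while $m_j$ (for $\gamma_j\neq\gamma_i$) is at distance of order $n\lvert\gamma_j-\gamma_i\rvert$ from it. So the statement is geometrically clear; the work is to make the domination quantitative and uniform in $u$ and $n$. I assume, as in the standing setup, that $\gamma_1<\gamma_2<\cdots<\gamma_p$ (the assertion genuinely uses distinctness); then for large $n$, $m_j>m_i$ exactly when $j>i$, and indeed $\lvert m_j-m_i\rvert\to\infty$.

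\textbf{The estimate (interior case).} Fix $i$ with $0<\gamma_i<1$ and $j>i$ (the case $j<i$ follows by applying the argument with $u$, $m_\bullet$ replaced by $1-u$, $n-1-m_\bullet$, using $b_m(u)=b_{n-1-m}(1-u)$; the boundary cases $\gamma_i\in\{0,1\}$ are handled separately). Telescoping the ratio of consecutive binomial masses,
\[
\log\frac{\phi_{n,\gamma_{j,n}}'(u)}{\phi_{n,\gamma_{i,n}}'(u)}=\sum_{m=m_i}^{m_j-1}t(m),\qquad t(m):=\log\frac{(n-1-m)u}{(m+1)(1-u)},
\]
where $t$ is strictly decreasing in $m$ and changes sign at $m=nu-1$. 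I would split the sum there. The \emph{positive block} has at most $nu-m_i\leq n\delta_i+O(1)$ terms, each bounded by $t(m_i)\leq\log\frac{(1-\gamma_i)u}{\gamma_i(1-u)}+O(1/n)\leq C\delta_i+O(1/n)$ (bounding $n-1-m_i$ and $m_i+1$ via $\lvert m_i-(n-1)\gamma_i\rvert\leq M$ and using $\lvert u-\gamma_i\rvert<\delta_i$), so it contributes at most $Cn\delta_i^2+O(1)$. The \emph{negative block} contains at least $\tfrac12(n-1)(\gamma_j-\gamma_i)-O(1)$ indices $m$ with $m\geq(n-1)\tfrac{\gamma_i+\gamma_j}{2}$, for each of which $t(m)\leq\log\frac{(1-(\gamma_i+\gamma_j)/2)u}{((\gamma_i+\gamma_j)/2)(1-u)}\leq-\kappa_{ij}<0$ uniformly for $u$ near $\gamma_i$ — the bound is negative precisely because $\tfrac{\gamma_i+\gamma_j}{2}>\gamma_i$ — so, discarding the remaining nonpositive terms, it contributes at most $-\kappa_{ij}\,\tfrac12(n-1)(\gamma_j-\gamma_i)+O(1)$. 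Adding the two blocks,
\[
\log\frac{\phi_{n,\gamma_{j,n}}'(u)}{\phi_{n,\gamma_{i,n}}'(u)}\leq\Bigl(C\delta_i^2-\tfrac{\kappa_{ij}}{2}(\gamma_j-\gamma_i)\Bigr)n+O(1).
\]

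\textbf{Conclusion and the hard part.} Choosing $\delta_i$ small enough that $C\delta_i^2<\tfrac{\kappa_{ij}}{4}(\gamma_j-\gamma_i)$ for every $j\neq i$ (and small enough that the various ``$u$ near $\gamma_i$'' and ``$n$ large'' steps go through, e.g.\ $\delta_i<\tfrac14\min_{k\neq i}\lvert\gamma_i-\gamma_k\rvert$), the coefficient of $n$ is negative, so the log‑ratio tends to $-\infty$ and in particular is $\leq0$ for all $n\geq N_{ij}$; one then takes $\delta_i$ the minimum and $N$ the maximum over the finitely many pairs. The main obstacles in writing this out are (a) keeping all the $O(\cdot)$ terms uniform in $u$ over $(\gamma_i-\delta_i,\gamma_i+\delta_i)$ and verifying that the positive block genuinely lies inside $[m_i,m_j-1]$ so the split is legitimate; and (b), less routinely, the boundary cases $\gamma_i\in\{0,1\}$, where $m_i$ stays bounded and the telescoping sum is too short for the above argument — there one instead shows directly that $b_{m_j}(u)$ decays geometrically in $n$, uniformly on $[0,\delta_i]$ (since $m_j\sim(n-1)\gamma_j\to\infty$ and $u$ is bounded away from $\gamma_j$), while $b_{m_i}(u)$ decays strictly more slowly once $\delta_i$ is small. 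A cleaner alternative for the interior case is to quote the local central limit theorem together with the large‑deviation estimate $P\{B_{n-1,u}=m\}=e^{-(n-1)D(m/(n-1)\,\|\,u)+o(n)}$, $D$ the relative entropy, which reduces the inequality to $D(\gamma_i\|u)<D(\gamma_j\|u)$ for $u$ near $\gamma_i$ (immediate from $D(\gamma_i\|\gamma_i)=0$ and continuity), at the cost of heavier machinery.
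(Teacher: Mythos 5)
Your proposal is correct and takes a genuinely different route from the paper's. The paper argues qualitatively: using \cref{2-proposition-asymptotic-phi_{n,gamma}-and-phi_{n,gamma}-derivative} it notes that for large $n$ the derivative $\phi_{n,\gamma_{i,n}}'$ dominates $\phi_{n,\gamma_{j,n}}'$ at $u=\gamma_i$ and is dominated at $u=\gamma_j$, so by continuity they cross at some $u_{i,j,n}$ between $\gamma_i$ and $\gamma_j$; it then solves the crossing equation via \eqref{2-eq-n-gamma-phi-n-first-derivative} and a Riemann-sum computation to show $u_{i,j,n}\to u_{i,j}$ with $\ln\frac{1-u_{i,j}}{u_{i,j}}=\frac{1}{\gamma_j-\gamma_i}\int_{\gamma_i}^{\gamma_j}\ln\frac{1-x}{x}\,dx$, strictly between $\gamma_i$ and $\gamma_j$, and sets $\delta_{i,j}$ accordingly. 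You instead bound $\log\bigl(\phi_{n,\gamma_{j,n}}'(u)/\phi_{n,\gamma_{i,n}}'(u)\bigr)$ directly by telescoping consecutive binomial ratios and balancing a short positive block of size $O(n\delta_i^2)$ against a negative block contributing $-\kappa_{ij}(\gamma_j-\gamma_i)n/2$. Your estimate is stronger (exponential decay of the ratio, uniformly on the neighbourhood, rather than mere eventual domination) and it sidesteps a point the paper leaves implicit: locating one crossing inside $(\gamma_i,\gamma_j)$ yields the inequality on all of $(\gamma_i-\delta_i,\gamma_i+\delta_i)$ only because the ratio $\phi_{n,\gamma_{j,n}}'(u)/\phi_{n,\gamma_{i,n}}'(u)=\mathrm{const}\cdot(u/(1-u))^{m_j-m_i}$ is monotone in $u$, so the crossing is unique; your direct bound needs no such observation. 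What the paper's route buys is brevity and the exact limiting location of the switch point $u_{i,j}$, which it reuses in the subsequent remark; yours is self-contained and elementary. The only incomplete piece in your write-up is the boundary case $\gamma_i\in\{0,1\}$, which you correctly flag; it closes along the lines you indicate, and more crudely than a decay-rate comparison: for $\gamma_i=0$ and $\gamma_j>0$ one has $m_j-m_i\geq(n-1)\gamma_j-C_0$, so for $u\in(0,\delta_i]$ with $\delta_i\leq 1/4$ the bounds $\binom{n-1}{m_j}/\binom{n-1}{m_i}\leq 2^{n-1}$ and $(u/(1-u))^{m_j-m_i}\leq(2\delta_i)^{(n-1)\gamma_j-C_0}$ give $\phi_{n,\gamma_{j,n}}'(u)/\phi_{n,\gamma_{i,n}}'(u)\leq(2\delta_i)^{-C_0}\bigl(2(2\delta_i)^{\gamma_j}\bigr)^{n-1}$, which is eventually below $1$ uniformly in $u$ once $\delta_i<2^{-1-1/\gamma_j}$ (the value at $u=0$ being trivial). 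With that supplied, the proof is complete.
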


\begin{proof}
	Let us fix $i \in \left\{1,2,\ldots,p\right\}$. By \cref{2-proposition-asymptotic-phi_{n,gamma}-and-phi_{n,gamma}-derivative}, we have $\phi_{n,\gamma_{i,n}}'(u) \to \infty$, if $u=\gamma_i$ and $\phi_{n,\gamma_{i,n}}'(u) \to 0$, otherwise. If $j=i$, the proof is trivial. Thus, we consider $j \in \left\{1,2,\ldots,p\right\} \setminus \left\{i\right\}$. Then, $\phi_{n,\gamma_{j,n}}'(u) \to \infty$ if $u=\gamma_j$ and $\phi_{n,\gamma_{j,n}}'(u) \to 0$ otherwise. Thus, for large $n$, $\phi_{n,\gamma_{i,n}}'(\gamma_i)>\phi_{n,\gamma_{j,n}}'(\gamma_i)$ and $\phi_{n,\gamma_{i,n}}'(\gamma_j)<\phi_{n,\gamma_{j,n}}'(\gamma_j)$. By \textnormal{(C1)}, both $\phi_{n,\gamma_{i,n}}'$ and $\phi_{n,\gamma_{j,n}}'$ are continuous. Hence they must cross each other between $\gamma_i$ and $\gamma_j$. Let this point be $u_{i,j,n}$. We shall show that $\left\{u_{i,j,n} : n \in \mathbb{N}\right\}$ forms a convergent sequence and its limit $u_{i,j}$ is bounded away from both $\gamma_i$ and $\gamma_j$, i.e.,
	\begin{equation}
		\label{2-eq-lemma-mixture-delta-i-j}
		\delta_{i,j}:=\min{\left\{\lvert \gamma_i-u_{i,j} \rvert, \lvert \gamma_j-u_{i,j} \rvert\right\}}>0.
	\end{equation}
	Note that $\phi_{n,\gamma_{i,n}}'(u_{i,j,n})=\phi_{n,\gamma_{j,n}}'(u_{i,j,n})$. From \eqref{2-eq-n-gamma-phi-n-first-derivative}, this is equivalent to
	\begin{align}
		\label{2-eq-lemma-mixture-delta-i}
		&\binom{n-1}{\left[(n-1)\gamma_{i,n}\right]} u_{i,j,n}^{\left[(n-1)\gamma_{i,n}\right]} (1-u_{i,j,n})^{n-1-\left[(n-1)\gamma_{i,n}\right]}\nonumber\\
		&=\binom{n-1}{\left[(n-1)\gamma_{j,n}\right]} u_{i,j,n}^{\left[(n-1)\gamma_{j,n}\right]} (1-u_{i,j,n})^{n-1-\left[(n-1)\gamma_{j,n}\right]}.
	\end{align}
	We shall need the following identity.
	\begin{equation}\label{2-eq-mixture-combination-identity}
		\binom{n-1}{\left[(n-1)x\right]}^{-1} \binom{n-1}{\left[(n-1)y\right]}=\prod_{k=\left[(n-1)x\right]+1}^{\left[(n-1)y\right]}\frac{n-k}{k},
	\end{equation}
	for every $0 \leq x<y \leq 1$. Now, we consider the cases when $j>i$ and $j<i$ separately.\vs
	
	\ni{\bf Case (i):} $j>i$. In this case, $\gamma_j>\gamma_i$ and hence $\gamma_{j,n}>\gamma_{i,n}$ for large $n$. Then, it follows from \eqref{2-eq-lemma-mixture-delta-i} and \eqref{2-eq-mixture-combination-identity} that
	\begin{equation*}
		\left(\frac{1-u_{i,j,n}}{u_{i,j,n}}\right)^{\left[(n-1)\gamma_{j,n}\right]-\left[(n-1)\gamma_{i,n}\right]}=\prod_{k=\left[(n-1)\gamma_{i,n}\right]+1}^{\left[(n-1)\gamma_{j,n}\right]}\frac{n-k}{k}.
	\end{equation*}
	Taking logarithm, it follows that
	\begin{equation}\label{2-eq-lemma-mixture-delta-i-log}
		\ln{\left(\frac{1-u_{i,j,n}}{u_{i,j,n}}\right)}=\frac{n}{\left[(n-1)\gamma_{j,n}\right]-\left[(n-1)\gamma_{i,n}\right]} \sum_{k=\left[(n-1)\gamma_{i,n}\right]+1}^{\left[(n-1)\gamma_{j,n}\right]} \frac{1}{n} f\left(\frac{k}{n}\right),
	\end{equation}
	where $f(x)=\ln{(\frac{1-x}{x})}$. As $n \to \infty$, the right hand side of \eqref{2-eq-lemma-mixture-delta-i-log} converges to
	\[
	\frac{1}{\gamma_j-\gamma_i} \int_{\gamma_i}^{\gamma_j} f(x) dx.
	\]
	Thus, the left hand side of \eqref{2-eq-lemma-mixture-delta-i-log} must converge and hence $u_{i,j,n}$ converges to some limit $u_{i,j}$, as $n \to \infty$. Then
	\[
	\ln{\left(\frac{1-u_{i,j}}{u_{i,j}}\right)}=\frac{1}{\gamma_j-\gamma_i} \int_{\gamma_i}^{\gamma_j} \ln{\left(\frac{1-x}{x}\right)} dx.
	\]
	Since, $\ln{(\frac{1-x}{x})}$ is strictly decreasing in $x$, we have
	\[
	\ln{\left(\frac{1-\gamma_j}{\gamma_j}\right)}<\ln{\left(\frac{1-u_{i,j}}{u_{i,j}}\right)}<\ln{\left(\frac{1-\gamma_i}{\gamma_i}\right)}.
	\]
	It follows that $\gamma_i<u_{i,j}<\gamma_j$, which implies \eqref{2-eq-lemma-mixture-delta-i-j}.\vs
	
	\ni{\bf Case (ii):} $j<i$. In this case, $\gamma_{j,n}<\gamma_{i,n}$. Proceeding similarly as in Case (i), we have $\gamma_j<u_{i,j}<\gamma_i$, which implies \eqref{2-eq-lemma-mixture-delta-i-j}.\vs
	
	The proof now follows by choosing $\delta_i=\min{\left\{\delta_{i,j} : j \in \left\{1,2,\ldots,p\right\} \setminus \left\{i\right\}\right\}}$.
\end{proof}\vs

\begin{remark}
	Although it is not required for the proof of \cref{2-lemma-item-discrete-limit-p-mixture-delta-i} itself, it is interesting to note that
	\[
	\min{\left\{\delta_{i,j} : j \in \left\{1,2,\ldots,p\right\} \setminus \left\{i\right\}\right\}}=\begin{cases*}
		\delta_{1,2} & if  $i=1$,\\
		\min{\left\{\delta_{i,i-1},\delta_{i,i+1}\right\}}, & if $i \in \left\{2,\ldots,p-1\right\}$,\\
		\delta_{p-1,p} & if  $i=p$.
	\end{cases*}
	\]	
	Hence, in the choice of $\delta_i$, the minimum is taken over at most $2$ positive real numbers (instead of $p-1$). To see this, it is enough to notice that $(i)$ $u_{i,i-1}<\gamma_i<u_{i,i+1}$ and $(ii)$ $u_{i,j}$ is nondecreasing in $j \in \left\{1,2,\ldots,p\right\} \setminus \left\{i\right\}$, for every $i \in \left\{1,2,\ldots,p\right\}$. To show the latter fact, let us choose $j$ and $k$ from the set $\left\{1,2,\ldots,p\right\} \setminus \left\{i\right\}$ such that $j<k$. If $j<i<k$, then $u_{i,j}<\gamma_i<u_{i,k}$. If $i<j<k$, then for large $n$, $u<u_{i,j} \Rightarrow \phi_{n,\gamma_{i,n}}'(u)>\phi_{n,\gamma_{j,n}}'(u)$. Again, for large $n$, $u<u_{j,k} \Rightarrow \phi_{n,\gamma_{j,n}}'(u)>\phi_{n,\gamma_{k,n}}'(u)$. Note that $u_{i,j}<\gamma_j<u_{j,k}$. Thus, whenever $u<u_{i,j}$, we have $\phi_{n,\gamma_{i,n}}'(u)>\phi_{n,\gamma_{k,n}}'(u)$. Hence $u_{i,j} \leq u_{i,k}$. If $j<k<i$, it can be shown that $u_{i,j} \leq u_{i,k}$ by similar line of arguments.
\end{remark}\vs

\begin{proposition}
	\label{2-proposition-item-discrete-limit-p-mixture-delta-i-ratio}
	For every $i \in \left\{1,2,\ldots,p\right\}$, there exists $\delta_i>0$ such that, if $\gamma_i-\delta_i<a<b<\gamma_i$ or $\gamma_i>b>a>\gamma_i+\delta_i$, then
	\begin{equation}\label{2-eq-item-discrete-limit-p-mixture-delta-i-ratio}
		\frac{\phi_{n,\boldsymbol{\alpha},\boldsymbol{\gamma}_n}'(a)}{\phi_{n,\boldsymbol{\alpha},\boldsymbol{\gamma}_n}'(b)} \to 0 \text{ as } n \to \infty.
	\end{equation}
\end{proposition}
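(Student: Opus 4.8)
The plan is to reduce the ratio of mixture-derivatives to a single ratio of the form $\phi_{n,\gamma_{i,n}}'(a)/\phi_{n,\gamma_{i,n}}'(b)$ and then quote \cref{2-corollary-phi_n-r-2-limit}. The two ingredients I will use are: \cref{2-lemma-item-discrete-limit-p-mixture-delta-i}, which supplies a $\delta_i>0$ and an $N\in\mathbb{N}$ with $\phi_{n,\gamma_{i,n}}'(u)\ge\phi_{n,\gamma_{j,n}}'(u)$ for all $j\in\{1,\dots,p\}$, all $u\in(\gamma_i-\delta_i,\gamma_i+\delta_i)\medcap[0,1]$ and all $n\ge N$; and \cref{2-corollary-phi_n-r-2-limit}, which gives $\phi_{n,\gamma_{i,n}}'(t)/\phi_{n,\gamma_{i,n}}'(s)\to 0$ as $n\to\infty$ whenever $0<t<s<\gamma_i$ or $1>t>s>\gamma_i$ (recall $\lvert\gamma_{i,n}-\gamma_i\rvert=O(1/n)$). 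First I would fix $i$ and let $\delta_i$ be the constant from \cref{2-lemma-item-discrete-limit-p-mixture-delta-i}, shrinking it if necessary so that $\gamma_i-\delta_i>0$ when $\gamma_i>0$ and $\gamma_i+\delta_i<1$ when $\gamma_i<1$, and so that the intervals $(\gamma_i-\delta_i,\gamma_i+\delta_i)$ obey the non-overlap conventions of the running setup; shrinking $\delta_i$ only strengthens the conclusion of the lemma on the smaller interval. Note that the first alternative in the hypothesis is vacuous when $\gamma_i=0$ and the second when $\gamma_i=1$, so this causes no loss.

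Next, take the case $\gamma_i-\delta_i<a<b<\gamma_i$. Since then $a\in(\gamma_i-\delta_i,\gamma_i+\delta_i)\medcap[0,1]$, \cref{2-lemma-item-discrete-limit-p-mixture-delta-i} gives $\phi_{n,\gamma_{j,n}}'(a)\le\phi_{n,\gamma_{i,n}}'(a)$ for every $j$ and every $n\ge N$. Differentiating \eqref{2-eq-mixture-phi-n-alpha-gamma-n} term by term (each summand being differentiable on $(0,1)$ by \cref{2-proposition-phi_n-r}) and using $\sum_{j=1}^p\alpha_j=1$, we obtain the upper bound $\phi_{n,\boldsymbol{\alpha},\boldsymbol{\gamma}_n}'(a)=\sum_{j=1}^p\alpha_j\phi_{n,\gamma_{j,n}}'(a)\le\phi_{n,\gamma_{i,n}}'(a)$; on the other hand, nonnegativity of every summand gives the lower bound $\phi_{n,\boldsymbol{\alpha},\boldsymbol{\gamma}_n}'(b)\ge\alpha_i\phi_{n,\gamma_{i,n}}'(b)$. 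Since $\alpha_i>0$ (because $\boldsymbol{\alpha}\in(0,1)^p$), these combine, for $n\ge N$, to
\[
\frac{\phi_{n,\boldsymbol{\alpha},\boldsymbol{\gamma}_n}'(a)}{\phi_{n,\boldsymbol{\alpha},\boldsymbol{\gamma}_n}'(b)}\le\frac{1}{\alpha_i}\cdot\frac{\phi_{n,\gamma_{i,n}}'(a)}{\phi_{n,\gamma_{i,n}}'(b)}.
\]

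Finally, because $0<\gamma_i-\delta_i<a<b<\gamma_i$, I would apply \cref{2-corollary-phi_n-r-2-limit} with $\gamma=\gamma_i$, $\gamma_n=\gamma_{i,n}$, $t=a$ and $s=b$ to conclude $\phi_{n,\gamma_{i,n}}'(a)/\phi_{n,\gamma_{i,n}}'(b)\to 0$, whence the left-hand side above tends to $0$, which is \eqref{2-eq-item-discrete-limit-p-mixture-delta-i-ratio}. The remaining case $\gamma_i<b<a<\gamma_i+\delta_i$ runs identically: again $a\in(\gamma_i-\delta_i,\gamma_i+\delta_i)\medcap[0,1]$, the same two bounds hold, and one now invokes the branch $1>a>b>\gamma_i$ of \cref{2-corollary-phi_n-r-2-limit}. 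I do not expect a substantive obstacle here; the only point needing care is the initial choice of $\delta_i$, which must at once realize \cref{2-lemma-item-discrete-limit-p-mixture-delta-i}, keep $a$ and $b$ strictly inside $(0,1)$, and respect the interval-separation assumptions of the ambient framework.
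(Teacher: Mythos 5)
Your proof is correct and follows essentially the same route as the paper: both arguments rest on \cref{2-lemma-item-discrete-limit-p-mixture-delta-i} to dominate the numerator near $\gamma_i$ and on \cref{2-corollary-phi_n-r-2-limit} to kill the resulting single-index ratio. Your bookkeeping is in fact slightly cleaner — bounding the whole numerator by $\phi_{n,\gamma_{i,n}}'(a)$ (applying the lemma at $a$ for every $j$) and the denominator below by $\alpha_i\phi_{n,\gamma_{i,n}}'(b)$ lets you invoke the corollary only for the index $i$, whereas the paper splits the sum into $j\le i$ and $j>i$ and applies the corollary to each $\phi_{n,\gamma_{j,n}}$ with $j\le i$ (resp.\ $j\ge i$) separately.
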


\begin{proof}
	By \cref{2-lemma-item-discrete-limit-p-mixture-delta-i}, there exists positive real numbers $\delta_1,\delta_2,\ldots,\delta_p$ such that, for every $(i,j) \in \left\{1,2,\ldots,p\right\} \times \left\{1,2,\ldots,p\right\}$, $\phi_{n,\gamma_{i,n}}'(u) \geq \phi_{n,\gamma_{j,n}}'(u)$, whenever $u \in (\gamma_i-\delta_i,\gamma_i+\delta_i) \medcap [0,1]$. Now, let us fix $i \in \left\{1,2,\ldots,p\right\}$ and $\gamma_i<b<a<\gamma_i+\delta_i$. Then, by \eqref{2-eq-n-gamma-phi-n-first-derivative}, we have
	\begin{align*}
		\frac{\phi'_{n,\boldsymbol{\alpha},\boldsymbol{\gamma}_n}(a)}{\phi'_{n,\boldsymbol{\alpha},\boldsymbol{\gamma}_n}(b)}&=\frac{\sum_{j=1}^p \alpha_j\,\phi'_{n,\gamma_{j,n}}(a)}{\sum_{j=1}^p \alpha_j\,\phi'_{n,\gamma_{j,n}}(b)}\\
		&=\frac{\sum_{j=1}^i \alpha_j\,\phi'_{n,\gamma_{j,n}}(a)+\sum_{j=i+1}^p \alpha_j\,\phi'_{n,\gamma_{j,n}}(a)}{\sum_{j=1}^i \alpha_j\,\phi'_{n,\gamma_{j,n}}(b)+\sum_{j=i+1}^p \alpha_j\,\phi'_{n,\gamma_{j,n}}(b)}\\
		&\leq \sum_{j=1}^i \frac{\phi_{n,\gamma_{j,n}}'(a)}{\phi_{n,\gamma_{j,n}}'(b)}+\sum_{j=i+1}^p \frac{\alpha_j\phi_{n,\gamma_{j,n}}'(a)}{\alpha_i\phi_{n,\gamma_{i,n}}'(b)}\\
		&\leq \sum_{j=1}^i \frac{\phi_{n,\gamma_{j,n}}'(a)}{\phi_{n,\gamma_{j,n}}'(b)}+\frac{\phi_{n,\gamma_{i,n}}'(a)}{\phi_{n,\gamma_{i,n}}'(b)}\sum_{j=i+1}^p \frac{\alpha_j}{\alpha_i},
	\end{align*}
	where the last inequality follows from \cref{2-lemma-item-discrete-limit-p-mixture-delta-i}. Applying \cref{2-corollary-phi_n-r-2-limit} on $\phi_{n,\gamma_{1,n}}$, $\phi_{n,\gamma_{2,n}}$, $\ldots$ , $\phi_{n,\gamma_{i,n}}$, we have $\phi_{n,\gamma_{j,n}}'(a)/\phi_{n,\gamma_{j,n}}'(b) \to 0$, as $n \to \infty$, for every $j \in \left\{1,2,\ldots,i\right\}$, which implies \eqref{2-eq-item-discrete-limit-p-mixture-delta-i-ratio}. On the other hand, if $\gamma_i-\delta<a<b<\gamma_i$, then proceeding in a similar line, we obtain
	\begin{equation*}
		\frac{\phi_{n,\boldsymbol{\alpha},\boldsymbol{\gamma}_n}(a)}{\phi_{n,\boldsymbol{\alpha},\boldsymbol{\gamma}_n}(b)} \leq \frac{\phi_{n,\gamma_{i,n}}'(a)}{\phi_{n,\gamma_{i,n}}'(b)}\sum_{j=1}^{i-1} \frac{\alpha_j}{\alpha_i}+\sum_{j=i}^p \frac{\phi_{n,\gamma_{j,n}}'(a)}{\phi_{n,\gamma_{j,n}}'(b)}.
	\end{equation*}
	Again, applying \cref{2-corollary-phi_n-r-2-limit} on $\phi_{n,\gamma_{i,n}},\ldots,\phi_{n,\gamma_{p,n}}$, we have $\phi_{n,\gamma_{j,n}}'(a)/\phi_{n,\gamma_{j,n}}'(b) \to 0$, as $n \to \infty$, for every $j \in \left\{i,\ldots,p\right\}$, which implies \eqref{2-eq-item-discrete-limit-p-mixture-delta-i-ratio}. Hence the proof is established.
\end{proof}\vs

\begin{lemma}
	\label{2-lemma-mixture-convex-outside-epsilon}
	For every $\epsilon>0$, there exists $N \in \mathbb{N}$ such that $\phi_{n,\boldsymbol{\alpha},\boldsymbol{\gamma}_n}'$ is convex in $\left[0,\gamma_1-\epsilon\right] \cup \left[\gamma_1+\epsilon,\gamma_2-\epsilon\right] \cup \left[\gamma_2+\epsilon,\gamma_3-\epsilon\right] \cup \cdots \cup \left[\gamma_{p-1}+\epsilon,\gamma_p-\epsilon\right] \cup \left[\gamma_p+\epsilon,1\right]$, for every $n \geq N$.
\end{lemma}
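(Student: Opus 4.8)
The plan is to collapse the whole statement to a single shape computation for the polynomial $t \mapsto t^{k}(1-t)^{n-1-k}$ and then let $n \to \infty$. Since $\phi_{n,\boldsymbol{\alpha},\boldsymbol{\gamma}_n}'(u) = \sum_{i=1}^{p} \alpha_i\,\phi_{n,\gamma_{i,n}}'(u)$ with all $\alpha_i > 0$, and a nonnegative linear combination of functions each convex on a common interval is again convex on that interval, it suffices to prove: \emph{for every $\epsilon > 0$ and every $i \in \{1,\ldots,p\}$ there is $N_i \in \mathbb{N}$ such that, for all $n \geq N_i$, $\phi_{n,\gamma_{i,n}}'$ is convex on $[0,\gamma_i - \epsilon]$ and on $[\gamma_i + \epsilon,1]$.} Granting this, $N := \max\{N_1,\ldots,N_p,3\}$ works: each interval in the union in the statement is of the form $[\gamma_j + \epsilon,\gamma_{j+1} - \epsilon]$ (with $\gamma_0 = 0$, $\gamma_{p+1} = 1$, empty intervals ignored), which is disjoint from every $(\gamma_i - \epsilon,\gamma_i + \epsilon)$ and hence contained in $[0,\gamma_i - \epsilon] \cup [\gamma_i + \epsilon,1]$ for \emph{every} $i$; so on each such interval all $\phi_{n,\gamma_{i,n}}'$, and therefore $\phi_{n,\boldsymbol{\alpha},\boldsymbol{\gamma}_n}'$, are convex. (Here ``convex on a union of intervals'' is read componentwise, which is what \textnormal{(C4)} uses anyway.)

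For the core step, write $k = k_{i,n} := [(n-1)\gamma_{i,n}]$ and $m = n-1-k$. By \eqref{2-eq-n-gamma-phi-n-first-derivative}, $\phi_{n,\gamma_{i,n}}'(t) = n\binom{n-1}{k}\,t^{k}(1-t)^{m}$, which is a polynomial in $t$, hence smooth on $[0,1]$. Differentiating $g(t) := t^{k}(1-t)^{m}$ twice and simplifying gives, for $t \in (0,1)$, $g''(t) = t^{k-2}(1-t)^{m-2}\,H(t)$, where $H(t) := k(k-1)(1-t)^{2} + m(m-1)t^{2} - 2km\,t(1-t)$; expanding and using $k + m = n-1$, this collapses to the quadratic
\[
H(t) = (n-1)(n-2)\,t^{2} - 2k(n-2)\,t + k(k-1),
\]
whose leading coefficient is positive for $n \geq 3$. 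Since $t^{k-2}(1-t)^{m-2} > 0$ on $(0,1)$, the sign of $(\phi_{n,\gamma_{i,n}}')''$ on $(0,1)$ — and, by continuity, on $[0,1]$ — agrees with the sign of $H$. A short computation gives discriminant $4k(n-2)(n-1-k) \geq 0$ and roots
\[
t_{\pm}^{(i,n)} = \frac{k}{n-1} \pm \frac{\sqrt{k(n-1-k)}}{(n-1)\sqrt{n-2}},
\]
so $H \geq 0$ — equivalently, $\phi_{n,\gamma_{i,n}}'$ is convex — exactly on $[0,t_{-}^{(i,n)}] \cup [t_{+}^{(i,n)},1]$.

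Finally I would show $t_{\pm}^{(i,n)} \to \gamma_i$ as $n \to \infty$. The main term satisfies $\gamma_{i,n} - \tfrac{1}{n-1} < \tfrac{k}{n-1} \leq \gamma_{i,n}$, so $\tfrac{k}{n-1} \to \gamma_i$ (only $\gamma_{i,n} \to \gamma_i$ is used), while the correction is at most $\tfrac{1}{2\sqrt{n-2}}$ since $k(n-1-k) \leq \big(\tfrac{n-1}{2}\big)^{2}$, hence it vanishes. Therefore, for $n$ large, $t_{-}^{(i,n)} > \gamma_i - \epsilon$ and $t_{+}^{(i,n)} < \gamma_i + \epsilon$, which yields convexity of $\phi_{n,\gamma_{i,n}}'$ on $[0,\gamma_i - \epsilon] \cup [\gamma_i + \epsilon,1]$, as needed. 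The degenerate extremes $k = 0$ and $k = n-1$ (occurring in particular when $\gamma_i = 0$ or $\gamma_i = 1$) only make $H(t) = (n-1)(n-2)t^{2}$ respectively $(n-1)(n-2)(1-t)^{2}$, both $\geq 0$ throughout, so they cause no difficulty, and the empty endpoint intervals $[0,\gamma_1-\epsilon]$, $[\gamma_p+\epsilon,1]$ (when $\gamma_1 \leq \epsilon$ or $\gamma_p \geq 1-\epsilon$) are vacuous. I expect the one place needing genuine care to be the algebraic collapse of $H$ into the clean quadratic above together with its discriminant; everything else is packaging.
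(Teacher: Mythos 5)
Your proposal is correct and follows essentially the same route as the paper: reduce by linearity (nonnegative combinations preserve convexity) to each $\phi_{n,\gamma_{i,n}}'$, compute its second derivative, factor out the positive power term, and show the roots of the resulting quadratic in $t$ both converge to $\gamma_i$, so the region of concavity is eventually absorbed into $(\gamma_i-\epsilon,\gamma_i+\epsilon)$. Your quadratic $H(t)=(n-1)(n-2)t^2-2k(n-2)t+k(k-1)$ and its roots agree with the paper's $\alpha_n,\beta_n$ after normalization, and your explicit handling of the degenerate cases $k=0$, $k=n-1$ is a minor tidiness the paper omits.
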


\begin{proof}
	Note that the convexity of a function $f$ on an empty interval is a vacuous truth. It is sufficient to show that $\phi_{n,\boldsymbol{\alpha},\boldsymbol{\gamma}_n}'''(u) \geq 0$ for every $u \in [0,1] \setminus \cup_{i=1}^p (\gamma_i-\epsilon,\gamma_i+\epsilon)$. Since $\phi_{n,\boldsymbol{\alpha},\boldsymbol{\gamma}_n}'''(u)=\sum_{i=1}^p \alpha_i\,\phi_{n,\gamma_{i,n}}'''(u)$, for every $u \in [0,1]$, the result follows if we show that $\phi_{n,\gamma_{i,n}}'''$ is nonnegative outside the interval $(\gamma_i-\epsilon,\gamma_i+\epsilon) \medcap [0,1]$, for every $i \in \left\{1,2,\ldots,p\right\}$. Let us fix $i \in \left\{1,2,\ldots,p\right\}$. Then simple algebra shows that
	\begin{equation}\label{2-eq-mixture-phi-n-gamma-i-n-third-derivative}
		\phi_{n,\gamma_{i,n}}'''(u)=n(n-1)(n-2)\binom{n-1}{\left[(n-1)\gamma_{i,n}\right]}u^{\left[(n-1)\gamma_{i,n}\right]-2}(1-u)^{n-\left[(n-1)\gamma_{i,n}\right]-3}(u-\alpha_n)(u-\beta_n),
	\end{equation}
	where $\alpha_n$ and $\beta_n$ are the two roots of the equation
	\[
	u^2-\frac{\left[(n-1)\gamma_{i,n}\right]}{n-2}\left(1+\frac{n-3}{n-1}\right)u+\frac{\left[(n-1)\gamma_{i,n}\right](\left[(n-1)\gamma_{i,n}\right]-1)}{(n-1)(n-2)}=0.
	\]
	Then, we have $\alpha_n=s_n-t_n$ and $\beta_n=s_n+t_n$, where
	$s_n=\frac{1}{2}\left(\frac{\left[(n-1)\gamma_{i,n}\right]}{n-2}\right)\left(1+\frac{n-3}{n-1}\right)$ and
	\[
	t_n=\frac{1}{2}\left\{\left(\frac{\left[(n-1)\gamma_{i,n}\right]}{n-2}\right)^2\left(1+\frac{n-3}{n-1}\right)^2-4\left(\frac{\left[(n-1)\gamma_{i,n}\right]}{n-2}\right)\left(\frac{\left[(n-1)\gamma_{i,n}\right]-1}{n-1}\right)\right\}^{1/2}.
	\]
	It can be checked that $t_n \geq 0$ and $\lim_{n \to \infty} t_n=0$. Also, $\lim_{n \to \infty} s_n=\gamma_i$. Thus, $\alpha_n<\beta_n$ and both the roots converge to $\gamma_i$, as $n \to \infty$. Hence, for every $\epsilon>0$, there exists $N_i \in \mathbb{N}$ such that $(\alpha_n,\beta_n) \subset (\gamma_i-\epsilon,\gamma_i+\epsilon)$ whenever $n \geq N_i$. From \eqref{2-eq-mixture-phi-n-gamma-i-n-third-derivative}, we know that $\phi_{n,\gamma_{i,n}}'''(u)<0$ if and only if $u \in (\alpha_n,\beta_n)$. Thus, $\phi_{n,\gamma_{i,n}}'''$ is nonnegative outside $(\gamma_i-\epsilon,\gamma_i+\epsilon) \medcap [0,1]$. By choosing $N=\max{\left\{N_1,N_2,\ldots,N_p\right\}}$ and noting that the choice of $\epsilon>0$ is arbitrary, the proof follows.
\end{proof}\vs

Now we can state the main result on the departure-based asymptotic stochastic order between mixtures of order statistics, arising from two different homogeneous samples.

\begin{theorem}\label{2-theorem-mixture}
	Let $X_{n,\boldsymbol{\alpha},\boldsymbol{\gamma}_n}$ and $Y_{n,\boldsymbol{\alpha},\boldsymbol{\gamma}_n}$ be two random variables with respective cdfs $F_{X,n,\boldsymbol{\alpha},\boldsymbol{\gamma}_n}$ and $F_{Y,n,\boldsymbol{\alpha},\boldsymbol{\gamma}_n}$, for every $n \in \mathbb{N}$. Assume that the baseline distributions $F_X$ and $F_Y$ satisfy \textnormal{(A1)} and \textnormal{(A2)}, as stated in the setup of \cref{2-theorem-general}. Then $X_{n,\boldsymbol{\alpha},\boldsymbol{\gamma}_n} \leq_{\textnormal{d-ast}} Y_{n,\boldsymbol{\alpha},\boldsymbol{\gamma}_n}$, as $n \to \infty$. \hfill $\blacksquare$
\end{theorem}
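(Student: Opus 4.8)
The plan is to recognize that \cref{2-theorem-mixture} is an essentially immediate consequence of \cref{2-theorem-general}, once one checks that the family of distortion functions $\{\phi_{n,\boldsymbol{\alpha},\boldsymbol{\gamma}_n} : n \in \mathbb{N}\}$ defined in \eqref{2-eq-mixture-phi-n-alpha-gamma-n} satisfies the structural conditions \textnormal{(C1)--(C4)}. Recall from the construction preceding the theorem that $F_{X,n,\boldsymbol{\alpha},\boldsymbol{\gamma}_n}=\phi_{n,\boldsymbol{\alpha},\boldsymbol{\gamma}_n}(F_X)$ and $F_{Y,n,\boldsymbol{\alpha},\boldsymbol{\gamma}_n}=\phi_{n,\boldsymbol{\alpha},\boldsymbol{\gamma}_n}(F_Y)$, so the setup of \cref{2-theorem-general} applies with index set $T=\mathbb{N}$, parameter $t=n$, and baseline cdfs $F_X,F_Y$ (which are continuous, strictly increasing, have finite second moments, and are assumed to satisfy \textnormal{(A1)} and \textnormal{(A2)}). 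Without loss of generality we may assume the values $\gamma_1<\gamma_2<\cdots<\gamma_p$ are distinct and increasing; if some of them coincide we merge the corresponding mixing weights $\alpha_i$, which leaves $\phi_{n,\boldsymbol{\alpha},\boldsymbol{\gamma}_n}$ unchanged.

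First I would dispatch \textnormal{(C1)} and \textnormal{(C2)}: since $\phi_{n,\boldsymbol{\alpha},\boldsymbol{\gamma}_n}=\sum_{i=1}^p \alpha_i\,\phi_{n,\gamma_{i,n}}$ is a convex combination (with all $\alpha_i>0$) of the functions $\phi_{n,\gamma_{i,n}}$, each of which is a strictly increasing distortion function on $[0,1]$ that is continuously differentiable on $(0,1)$ (indeed with derivative continuous up to the endpoints), as recorded in \cref{2-proposition-phi_n-r}, the same properties pass to $\phi_{n,\boldsymbol{\alpha},\boldsymbol{\gamma}_n}$ for every $n$. Next, condition \textnormal{(C3)}, the asymptotic collapse of the derivative ratio on one-sided neighbourhoods of each $\gamma_i$, is precisely the content of \cref{2-proposition-item-discrete-limit-p-mixture-delta-i-ratio}, which also produces the positive numbers $\delta_1,\dots,\delta_p$ (with the closed-endpoint form of \textnormal{(C3)} obtained by a harmless shrinking, say replacing each $\delta_i$ by $\delta_i/2$); shrinking further if necessary, we may also assume the separation bounds $\delta_1<(\gamma_2-\gamma_1)/2$, $\delta_i<\min\{(\gamma_i-\gamma_{i-1})/2,(\gamma_{i+1}-\gamma_i)/2\}$, $\delta_p<(\gamma_p-\gamma_{p-1})/2$ required in the setup of \cref{2-theorem-general}, and these are the $\delta_i$ with respect to which \textnormal{(A1)} and \textnormal{(A2)} are to be read. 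Finally, \textnormal{(C4)}, the eventual convexity of $\phi_{n,\boldsymbol{\alpha},\boldsymbol{\gamma}_n}'$ on the complement of small neighbourhoods of the $\gamma_i$, is exactly \cref{2-lemma-mixture-convex-outside-epsilon}.

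Having verified \textnormal{(C1)--(C4)} for $\{\phi_{n,\boldsymbol{\alpha},\boldsymbol{\gamma}_n}\}$, \cref{2-theorem-general} applies verbatim and yields $X_{n,\boldsymbol{\alpha},\boldsymbol{\gamma}_n}\leq_{\textnormal{d-ast}} Y_{n,\boldsymbol{\alpha},\boldsymbol{\gamma}_n}$ as $n\to\infty$, which is the assertion. There is no genuine analytic obstacle here, since all the substantive work has been carried out in \cref{2-lemma-item-discrete-limit-p-mixture-delta-i}, \cref{2-proposition-item-discrete-limit-p-mixture-delta-i-ratio} and \cref{2-lemma-mixture-convex-outside-epsilon}; the only points demanding care are bookkeeping ones, namely making the $\delta_i$ supplied by \cref{2-lemma-item-discrete-limit-p-mixture-delta-i}/\cref{2-proposition-item-discrete-limit-p-mixture-delta-i-ratio} consistent with those used in \textnormal{(A1)}, \textnormal{(A2)} and in \cref{2-lemma-mixture-convex-outside-epsilon}, and confirming that the degenerate configurations in which some $\gamma_i\in\{0,1\}$, or in which $A_0\medcap(\gamma_i,\gamma_{i+1})=\emptyset$ for some $i$, are already absorbed by the case analysis inside the proof of \cref{2-theorem-general}.
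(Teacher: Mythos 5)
Your proposal is correct and follows exactly the route the paper takes: reduce to \cref{2-theorem-general} by verifying (C1)--(C2) via \cref{2-proposition-phi_n-r} and closure under positive linear combination, (C3) via \cref{2-proposition-item-discrete-limit-p-mixture-delta-i-ratio}, and (C4) via \cref{2-lemma-mixture-convex-outside-epsilon}. Your additional bookkeeping remarks (merging coincident $\gamma_i$, harmonizing the $\delta_i$) only make explicit what the paper leaves implicit.
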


Note that the theorem follows if $\left\{\phi_{n,\boldsymbol{\alpha},\boldsymbol{\gamma}_n} : n \in \mathbb{N}\right\}$ satisfies \textnormal{(C1)--(C4)} in the setup of \cref{2-theorem-general}. From \cref{2-proposition-phi_n-r}, we see that $\phi_{n,\gamma_{i,n}}$ is strictly increasing and continuously differentiable. Since both the properties are closed under addition and positive scalar multiplication, it immediately follows that $\phi_{n,\boldsymbol{\alpha},\boldsymbol{\gamma}_n}$ satisfies \textnormal{(C1)} and \textnormal{(C2)}. \cref{2-proposition-item-discrete-limit-p-mixture-delta-i-ratio} and \cref{2-lemma-mixture-convex-outside-epsilon} respectively show that $\left\{\phi_{n,\boldsymbol{\alpha},\boldsymbol{\gamma}_n} : n \in \mathbb{N}\right\}$ satisfies \textnormal{(C3)} and \textnormal{(C4)}.

\subsection*{Record values}\label{2-subsection-record-values}
The concept of record values was introduced by \citet{C_1952}. Let $\left\{X_n : n\in \mathbb{N}\right\}$ be a sequence of iid random variables, each following the distribution $F_X$. The first observation $X_1$ is a record by default. The next upper record occurs when the realized value of an earliest succeeding random variable exceed that of $X_1$. This intuition leads to the sequence $\{T_n : n \in \mathbb{N}\}$ of upper record times, defined by $T_1=1$ and $T_{n+1}=\min{\{i \in \mathbb{N} : X_i>X_{T_n}\}}$, for every $n \in \mathbb{N}$. The corresponding sequence of upper record values $\left\{R_n : n \in \mathbb{N}\right\}$ is defined by $R_n=X_{T_n}$, for every $n \in \mathbb{N}$. For more details, see \citet{AN_2011}. The notion was generalized by \citet{DK_1976} to $k$th record values, which considers records in terms of $k$th highest value in the sequence, where $k$ is a positive integer. The sequence $\{T_n^{(k)} : n \in \mathbb{N}\}$ of upper $k$th record times is defined by
\[
T_1^{(1)}=1 \text{ and } T_{n+1}^{(k)}=\min{\left\{i \in \mathbb{N} : X_{i:i+k-1}>X_{T_n^{(k)}:T_n^{(k)}+k-1}\right\}}, \text{ for every } n \in \mathbb{N}.
\]
The corresponding sequence of upper $k$th record values $\{R_n^{(k)} : n \in \mathbb{N}\}$ is defined by $R_n^{(k)}=X_{T_n^{(k)}}$, for every $n \in \mathbb{N}$. In this work, we shall focus on upper $k$th record value, although analogous result for lower $k$th record value, which is defined similarly, can be easily obtained. We consider the sequence $\left\{\phi_n : n \in \mathbb{N}\right\}$ of distortion functions given by
\begin{equation}\label{2-eq-kth-record-value-phi-n}
\phi_n(u)=\frac{1}{(n-1)!}\int_0^{-k\ln{(1-u)}} y^{n-1} e^{-y} dy,
\end{equation}
when $0 \leq u<1$, and $\phi_n(1)=\lim_{u \to 1-} \phi_n(u)$. Then the cdf of $R_n^{(k)}$ can be written as $\phi_n(F_X(x))$ (see Corollary $2$ of \citet{DK_1976}). Now we state the main result on record values.

\begin{theorem}\label{2-theorem-kth-record-value}
	Let $F_X$ and $F_Y$ be two cdfs satisfying $F_X(x)<F_Y(x)$ for every $x>c$, for some $c \in \mathbb{R}$. Also let $\left\{X_n : n \in \mathbb{N}\right\}$ and $\left\{Y_n : n \in \mathbb{N}\right\}$ be two sequences of iid random variables with respective parent cdfs $F_X$ and $F_Y$, and respective sequences of $k$th record values $\{R_n^{(k)} : n \in \mathbb{N}\}$ and $\{S_n^{(k)} : n \in \mathbb{N}\}$. Then $R_n^{(k)} \leq_{\textnormal{d-ast}} S_n^{(k)}$ for every $k \in \mathbb{N}$. \hfill $\blacksquare$
\end{theorem}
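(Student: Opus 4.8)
The plan is to place the record-value distortions $\{\phi_n : n\in\mathbb{N}\}$ of \eqref{2-eq-kth-record-value-phi-n} inside the framework of \cref{2-section-distorted-distributions} with the single degeneracy point $\gamma_1=1$, and to reproduce the argument of the largest-order-statistic case of \cref{2-theorem-n-gamma-rate}, i.e. of \cref{2-proposition-n-gamma-rate-special-case-largest-order}. First I would record the basic analytic facts about $\phi_n$: it is a distortion function, continuous on $[0,1]$ and strictly increasing on $[0,1)$ — hence on $[0,1]$, since $\phi_n(u)<1=\phi_n(1)$ for $u<1$ — continuously differentiable on $(0,1)$, and differentiating the incomplete-gamma integral gives
\[
\phi_n'(u)=\frac{k^n}{(n-1)!}\,\bigl(-\ln(1-u)\bigr)^{n-1}(1-u)^{k-1}.
\]
The change of variable $y=-k\ln(1-u)$ sends $\phi_n'(u)\,du$ to a $\mathrm{Gamma}(n,1)$ density in $y$, so the mass of $\phi_n'$ is driven to $u=1$ as $n\to\infty$; quantitatively, for fixed $0<s<t<1$,
\[
\frac{\phi_n'(s)}{\phi_n'(t)}=\left(\frac{-\ln(1-s)}{-\ln(1-t)}\right)^{n-1}\left(\frac{1-s}{1-t}\right)^{k-1}\longrightarrow 0\qquad(n\to\infty),
\]
the first factor being a fixed number in $(0,1)$ to the power $n-1$ and the second constant in $n$. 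This is the record-value analogue of \cref{2-corollary-phi_n-r-2-limit} for $\gamma=1$, and, with smoothness, it yields conditions \textnormal{(C1)}, \textnormal{(C2)}, \textnormal{(C3)} (the latter with $\delta_1$ arbitrary in $(0,1)$, since the limit above holds for \emph{every} $0<s<t<1$). Condition \textnormal{(C4)} follows exactly as in \cref{2-lemma-mixture-convex-outside-epsilon}: under the same substitution the sign of $\phi_n'''$ on $[0,1-\epsilon]$ is controlled by a quadratic whose two roots tend to $1$, so $\phi_n'''\geq 0$ there once $n$ is large.

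Next I would express the departure measure through $\phi_n$. Since $\phi_n$ is continuous and strictly increasing on $[0,1)$, for \emph{any} baseline cdf $F_X$ one has $F_{R_n^{(k)}}^{-1}(v)=F_X^{-1}(\phi_n^{-1}(v))$ for $v\in(0,1)$ — the proof of \cref{2-lemma-cauchy-normal-2.r} uses only properties of the distortion — and likewise $F_{S_n^{(k)}}^{-1}(v)=F_Y^{-1}(\phi_n^{-1}(v))$, whence $\{v\in(0,1):F_{R_n^{(k)}}^{-1}(v)>F_{S_n^{(k)}}^{-1}(v)\}=\phi_n(A_0)$ exactly as in \cref{2-lemma-A_0-A_0-n}. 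The substitution $v=\phi_n(u)$ then turns the ratio defining $\varepsilon_{\mathcal{W}_2}$ into
\[
\varepsilon_{\mathcal{W}_2}\bigl(F_{R_n^{(k)}},F_{S_n^{(k)}}\bigr)=\frac{\int_{A_0}(F_X^{-1}(u)-F_Y^{-1}(u))^2\,\phi_n'(u)\,du}{\int_0^1(F_X^{-1}(u)-F_Y^{-1}(u))^2\,\phi_n'(u)\,du}.
\]
The tail hypothesis enters precisely here: because $F_X$ and $F_Y$ are strictly ordered on $(c,\infty)$, the set $A_0$ on which the ordering of the $k$-record quantiles fails is confined to $(0,u^\ast]$ for some $u^\ast<1$ (one may take $u^\ast$ to be the value at $c$ of the relevant baseline cdf), hence it stays away from the point $u=1$ where $\phi_n'$ concentrates; moreover $(F_X^{-1}-F_Y^{-1})^2$ remains bounded below by some $\eta>0$ on an interval $[s_1,s_2]\subseteq(u^\ast,1)$ since $F_X\not\equiv F_Y$ past $c$. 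These are the records' incarnations of conditions \textnormal{(A1)} and \textnormal{(A2)}.

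Finally I would compare the two integrals. For $n$ large $\phi_n'$ is increasing on $[0,u^\ast]$ (its mode tends to $1$), so the numerator is at most $\phi_n'(u^\ast)\int_0^{u^\ast}(F_X^{-1}-F_Y^{-1})^2\,du$, while the denominator is at least $\eta\bigl(\phi_n(s_2)-\phi_n(s_1)\bigr)$; a direct estimate of the incomplete-gamma integrals shows that $\phi_n'(u^\ast)$ and $\phi_n(s_2)-\phi_n(s_1)$ are each of the order $k^nL^n/n!$ with the value of $L$ for $\phi_n'(u^\ast)$ strictly smaller than that for $\phi_n(s_2)-\phi_n(s_1)$, so their quotient decays geometrically and $\varepsilon_{\mathcal{W}_2}(F_{R_n^{(k)}},F_{S_n^{(k)}})\to 0$, i.e. $R_n^{(k)}\leq_{\textnormal{d-ast}}S_n^{(k)}$, for every $k\in\mathbb{N}$. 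I expect the genuine difficulty to be the bookkeeping forced by the fact that $F_X$ and $F_Y$ are not assumed continuous, strictly increasing, or to possess second moments: the denominator $\mathcal{W}_2^2(F_{R_n^{(k)}},F_{S_n^{(k)}})$ itself tends to $0$ (the $k$-records march off to the right endpoint of the support), so one must track the two vanishing rates against each other rather than merely bound the numerator, and one must separately check that $\int_0^{u^\ast}(F_X^{-1}-F_Y^{-1})^2\,\phi_n'\,du<\infty$ — which holds because $\phi_n'(u)$ is of order $u^{n-1}$ near $u=0$ and therefore absorbs any polynomial growth of the quantile functions there once $n$ is large. When $F_X$ and $F_Y$ are in addition continuous, strictly increasing and have finite second moments, the argument collapses to verifying \textnormal{(C1)--(C4)} for $\{\phi_n\}$ and \textnormal{(A1)--(A2)} for the baselines and invoking \cref{2-theorem-general} directly.
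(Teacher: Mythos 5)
Your strategy coincides with the paper's: the printed proof consists of the single observation that $\left\{\phi_n : n \in \mathbb{N}\right\}$ satisfies \textnormal{(C1)--(C4)} so that \cref{2-theorem-general} applies, and you supply exactly the verification it omits --- the closed form $\phi_n'(u)=\frac{k^n}{(n-1)!}(-\ln(1-u))^{n-1}(1-u)^{k-1}$, the geometric decay of $\phi_n'(s)/\phi_n'(t)$ for $s<t$ giving \textnormal{(C3)} with the single degeneracy point $\gamma_1=1$, and the convexity of $\phi_n'$ on $[0,1-\epsilon]$ for \textnormal{(C4)}; those computations are correct. You are also right that the one-line proof silently needs the baseline hypotheses of \cref{2-theorem-general} (continuity, strict increasingness, finite second moments, \textnormal{(A1)--(A2)}), none of which appear in the statement of \cref{2-theorem-kth-record-value}; your closing remark identifies this honestly, and the extra bookkeeping you describe for general baselines is not carried out in the paper either.

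One step of yours, however, does not follow from the hypothesis as printed. You assert that $A_0=\{u\in(0,1):F_X^{-1}(u)>F_Y^{-1}(u)\}$ is confined to $(0,u^\ast]$ with $u^\ast<1$. By \cref{2-lemma-A_0-B_0}, $A_0=F_X(B_0)$ with $B_0=\{x:F_X(x)<F_Y(x)\}$, and the stated hypothesis $F_X(x)<F_Y(x)$ for every $x>c$ puts $(c,\infty)$ \emph{inside} $B_0$, so $A_0$ contains a left neighbourhood of $1$ --- precisely where $\phi_n'$ concentrates --- and the ratio defining $\varepsilon_{\mathcal{W}_2}(F_{R_n^{(k)}},F_{S_n^{(k)}})$ then tends to $1$, not $0$. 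Your argument (like the paper's implicit appeal to \textnormal{(A1)} at $\gamma_1=1$) requires the opposite inequality $F_X(x)>F_Y(x)$ for $x>c$, which is the direction used in \cref{2-proposition-n-gamma-rate-special-case-largest-order}; with that correction everything you wrote goes through. So the defect lies in the theorem's printed hypothesis rather than in your method, but as written the deduction ``$A_0$ is confined to $(0,u^\ast]$'' is the one place where the proof would break, and it deserves an explicit justification rather than an assertion.
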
\vs

Observe that the sequence $\left\{\phi_n : n \in \mathbb{N}\right\}$ satisfies \textnormal{(C1)--(C4)} (as stated in the setup of \cref{2-theorem-general}). The proof of \cref{2-theorem-kth-record-value} then follows from \cref{2-theorem-general}.\vs

{\bf Acknowledgement.} The first author is supported by Senior Research Fellowship from University Grants Commission, Government of India.

\baselineskip=16pt

\bibliographystyle{newapa}
\bibliography{references}

\begin{thebibliography}{}

\bibitem[\protect\citeauthoryear{Ahsanullah \& Nevzorov}{Ahsanullah \&
  Nevzorov}{2011}]{AN_2011}
Ahsanullah, M. \& Nevzorov, V.~B. (2011).
\newblock Record Statistics. In: {\it International Encyclopedia of Statistical
  Science}, Lovric, M. (Ed.). 1195--1202. Springer Berlin Heidelberg.

\bibitem[\protect\citeauthoryear{Chandler}{Chandler}{1952}]{C_1952}
Chandler, K.~N. (1952).
\newblock The distribution and frequency of record values.
\newblock {\em Journal of the Royal Statistical Society. Series B}, {\em
  14\/}(2), 220--228.

\bibitem[\protect\citeauthoryear{Darwin}{Darwin}{1878}]{D_1878}
Darwin, C. (1878).
\newblock {\em The Effect of Cross- and Self-fertilization in the Vegetable
  Kingdom\/} (2nd ed.).
\newblock John Murray, London.

\bibitem[\protect\citeauthoryear{del Barrio, Cuesta-Albertos \& Matr{\'a}n}{del
  Barrio et~al.}{2018}]{B_2018}
del Barrio, E., Cuesta-Albertos, J.~A., \& Matr{\'a}n, C. (2018).
\newblock An Optimal Transportation Approach for Assessing Almost Stochastic
  Order. In: {\it The Mathematics of the Uncertain: A Tribute to Pedro Gil},
  Gil, E., Gil, E., Gil, J. and Gil, M. (Eds.), 33--44. Springer International
  Publishing.

\bibitem[\protect\citeauthoryear{Denneberg}{Denneberg}{1994}]{D_1994}
Denneberg, D. (1994).
\newblock {\em Non-Additive Measure and Integral}.
\newblock Springer, Netherlands.

\bibitem[\protect\citeauthoryear{Dziubdziela \& Kopociński}{Dziubdziela \&
  Kopociński}{1976}]{DK_1976}
Dziubdziela, W. \& Kopociński, B. (1976).
\newblock Limiting properties of the k-th record values.
\newblock {\em Applicationes Mathematicae}, {\em 15\/}(2), 187--190.

\bibitem[\protect\citeauthoryear{Ghosh \& Nanda}{Ghosh \&
  Nanda}{2021}]{GN_2021_A}
Ghosh, S. \& Nanda, A.~K. (2021).
\newblock {A}symptotic {S}tochastic {C}omparison of {R}andom {P}rocesses.
\newblock \url{arXiv:2103.01720}.

\bibitem[\protect\citeauthoryear{Gross \& Holland}{Gross \&
  Holland}{1968}]{GH_1968}
Gross, S. \& Holland, P. (1968).
\newblock The distribution of galton's statistic.
\newblock {\em The Annals of Mathematical Statistics}, {\em 39\/}(6),
  2114--2117.

\bibitem[\protect\citeauthoryear{Hodges}{Hodges}{1955}]{H_1955}
Hodges, J.~L., J. (1955).
\newblock Galton's rank order test.
\newblock {\em Biometrika}, {\em 42}, 261--262.

\bibitem[\protect\citeauthoryear{Lehmann}{Lehmann}{1955}]{L_1955}
Lehmann, E.~L. (1955).
\newblock Ordered families of distributions.
\newblock {\em Annals of Mathematical Statistics}, {\em 26\/}(3), 399--419.

\bibitem[\protect\citeauthoryear{Leshno \& Levy}{Leshno \&
  Levy}{2002}]{LL_2002}
Leshno, M. \& Levy, H. (2002).
\newblock Preferred by “all” and preferred by “most” decision makers:
  Almost stochastic dominance.
\newblock {\em Management Science}, {\em 48\/}(8), 1074--1085.

\bibitem[\protect\citeauthoryear{Samaniego}{Samaniego}{2007}]{S_2007}
Samaniego, F.~J. (2007).
\newblock {\em {System Signatures and their Applications in Engineering
  Reliability}}.
\newblock Springer, New York.

\bibitem[\protect\citeauthoryear{Shaked \& Shanthikumar}{Shaked \&
  Shanthikumar}{2007}]{SS_2007}
Shaked, M. \& Shanthikumar, J.~G. (2007).
\newblock {\em Stochastic Orders}.
\newblock Springer, New York.

\end{thebibliography}


\section{Appendix}\label{2-section-appendix}

\begin{proof}[Proof of \cref{2-lemma-A_0-B_0}]
	Note that, for any cdf $F$, we have $F(F^{-1}(u)) \geq u$, for every $u \in (0,1)$, where equality holds if $F$ is continuous and $F^{-1}(F(x)) \leq x$, for every $x \in \mathbb{R}$, where equality holds if $F$ is strictly increasing. Let $x \in B_0$. Then $F_X(x)<F_Y(x)$. Suppose, for the sake of contradiction, that $F_X(x) \notin A_0$. Then $F_Y^{-1}(F_Y(x))=x=F_X^{-1}(F_X(x)) \leq F_Y^{-1}(F_X(x))$. By nondecreasingness of $F_Y$, we have $F_X(x) \geq F_Y(x)$, a contradiction. Hence $F_X(x) \in A_0$. Since $x \in B_0$ is arbitrary, we have $F_X(B_0) \subseteq A_0$. Again, starting with $x \in B_0$ and $F_Y(x) \notin A_0$, we similarly have $A_0 \subseteq F_X(B_0)$. Hence $A_0=F_X(B_0)$. Similarly, $A_0=F_Y(B_0)$. Interchanging the role of $F_X$ and $F_Y$, we have $A_1=F_X(B_1)=F_Y(B_1)$. Again, $A_2=A_0 \cup A_1=F_X(B_0) \cup F_X(B_1)=F_X(B_0 \cup B_1)=F_X(B_2)$. In a similar manner, we obtain $A_2=F_Y(B_2)$.
\end{proof}\vs

\begin{proof}[Proof of \cref{2-proposition-phi_n-r-2}]
	Let us denote the fractional part of $(n-1)\gamma_n$ by $\left\{(n-1)\gamma_n\right\}$. Then, we have
	\begin{equation*}
		\frac{\phi_{n,\gamma_n}'(t)}{\phi_{n,\gamma_n}'(s)}=\left(\frac{s(1-t)}{t(1-s)}\right)^{\left\{(n-1)\gamma_n\right\}}\left(\left(\frac{t}{s}\right)^{\gamma_n} \left(\frac{1-t}{1-s}\right)^{1-\gamma_n}\right)^{n-1}.
	\end{equation*}
	Considering the cases $t>s$, $t<s$ and $t=s$ separately, it is easy to see that
	\[
	\left(\frac{s(1-t)}{t(1-s)}\right)^{\left\{(n-1)\gamma_n\right\}} \leq \left(\frac{s(1-t)}{t(1-s)}\right)^{\frac{1}{2}-\frac{1}{2}\,\cdot\,\sign{(t-s)}},
	\]
	with equality if and only if $t=s$. Again,
	\[
	\left(\left(\frac{t}{s}\right)^{\gamma_n} \left(\frac{1-t}{1-s}\right)^{1-\gamma_n}\right)^{n-1}=\left(\left(\frac{t}{s}\right)^{\gamma} \left(\frac{1-t}{1-s}\right)^{1-\gamma}\right)^{n-1} \left(\frac{t(1-s)}{s(1-t)}\right)^{(n-1)(\gamma_n-\gamma)}.
	\]
	Since, $\lvert \gamma_n-\gamma \rvert=O(1/n)$, there exists $K>0$ and $N \in \mathbb{N}$ such that, for every $n \geq N$, $\lvert \gamma_n-\gamma \rvert \leq K/n$, which in turn implies that $\lvert \gamma_n-\gamma \rvert \leq K/(n-1)$, i.e. $-K \leq (n-1)(\gamma_n-\gamma) \leq K$. Considering the cases $t<s$, $t>s$ and $t=s$ separately, we can see that
	\[
	\left(\frac{t(1-s)}{s(1-t)}\right)^{(n-1)(\gamma_n-\gamma)} \leq \left(\frac{t(1-s)}{s(1-t)}\right)^{K\,\cdot\,\sign{(t-s)}}.
	\]
	Hence,
	\begin{align*}
		\frac{\phi_{n,\gamma_n}'(t)}{\phi_{n,\gamma_n}'(s)} &\leq \left(\frac{s(1-t)}{t(1-s)}\right)^{\frac{1}{2}-\frac{1}{2}\,\cdot\,\sign{(t-s)}} \left(\frac{t(1-s)}{s(1-t)}\right)^{K\,\cdot\,\sign{(t-s)}} \left(\left(\frac{t}{s}\right)^{\gamma} \left(\frac{1-t}{1-s}\right)^{1-\gamma}\right)^{n-1}\\
		&=\left(\frac{s(1-t)}{t(1-s)}\right)^{\frac{1}{2}-(K+\frac{1}{2})\,\cdot\,\sign{(t-s)}} \left(\left(\frac{t}{s}\right)^{\gamma} \left(\frac{1-t}{1-s}\right)^{1-\gamma}\right)^{n-1}.
	\end{align*}
	Now, the proof follows by choosing $C(t,s)=\left(\frac{s(1-t)}{t(1-s)}\right)^{\frac{1}{2}-(K+\frac{1}{2})\,\cdot\,\sign{(t-s)}}$.
\end{proof}\vs

\begin{proof}[Proof of \cref{2-proposition-asymptotics-binomial}]
	To prove the first part, note that
	\begin{align}\label{2-eq-lemma-asymptotics-binomial-Z}
		\lim_{n \to \infty} P\left\{B_{n,u} \geq 1+\left[(n-1)\gamma_n\right]\right\}&=\lim_{n \to \infty} P\left\{B_{n,u} \geq \frac{1}{2}+\left[(n-1)\gamma_n\right]\right\}\nonumber\\
		&=\lim_{n \to \infty} P\left\{Z > \frac{\frac{1}{2}+\left[(n-1)\gamma_n\right]-nu}{\sqrt{nu(1-u)}}\right\},
	\end{align}
	where the last equality is due to CLT and $Z$ has the standard normal distribution. Now, using the hypothesis $\lvert \gamma_n-\gamma \rvert=O(1/n)$, we have
	\begin{align*}
		\frac{\frac{1}{2}+\left[(n-1)\gamma_n\right]-nu}{\sqrt{nu(1-u)}}=O(\sqrt{n})(\gamma-u)+O(1/\sqrt{n}),
	\end{align*}
	Denoting the cdf of standard normal distribution by $\Phi$, we have from \eqref{2-eq-lemma-asymptotics-binomial-Z} that
	\[
	\lim_{n \to \infty} P\left\{B_{n,u} \geq 1+\left[(n-1)\gamma_n\right]\right\}=1-\lim_{n \to \infty} \Phi(O(\sqrt{n})(\gamma-u)+O(1/\sqrt{n})),
	\]
	which essentially implies \eqref{2-eq-asymptotic-binomial-sf} by continuity of $\Phi$. To prove the second part, note that
	\begin{align*}
		&\lim_{n \to \infty} nP\left\{B_{n-1,u}=\left[(n-1)\gamma_n\right]\right\}=\lim_{n \to \infty} nP\left\{\left[(n-1)\gamma_n\right]-\frac{1}{2}<B_{n-1,u}<\left[(n-1)\gamma_n\right]+\frac{1}{2}\right\}\\
		&=\lim_{n \to \infty} nP\left\{\frac{\left[(n-1)\gamma_n\right]-\frac{1}{2}-(n-1)u}{\sqrt{(n-1)u(1-u)}}<\frac{B_{n-1,u}-(n-1)u}{\sqrt{(n-1)u(1-u)}}<\frac{\left[(n-1)\gamma_n\right]+\frac{1}{2}-(n-1)u}{\sqrt{(n-1)u(1-u)}}\right\}.
	\end{align*}
	Using CLT, the above expression becomes
	\begin{align*}
		&\lim_{n \to \infty} n\left\{\Phi\left(\frac{\left[(n-1)\gamma_n\right]+\frac{1}{2}-(n-1)u}{\sqrt{(n-1)u(1-u)}}\right)-\Phi\left(\frac{\left[(n-1)\gamma_n\right]-\frac{1}{2}-(n-1)u}{\sqrt{(n-1)u(1-u)}}\right)\right\}\nonumber\\
		&=\lim_{n \to \infty} n\left\{\Phi\left(\frac{(n-1)(\gamma_n-u)+\frac{1}{2}-\left\{(n-1)\gamma_n\right\}}{\sqrt{(n-1)u(1-u)}}\right)-\Phi\left(\frac{(n-1)(\gamma_n-u)-\frac{1}{2}-\left\{(n-1)\gamma_n\right\}}{\sqrt{(n-1)u(1-u)}}\right)\right\}\nonumber\\
		&=\lim_{n \to \infty} n\Bigg\{\Phi\left(\sqrt{\frac{n-1}{u(1-u)}}\left\{\gamma_n-u+\frac{1}{2(n-1)}-\frac{\left\{(n-1)\gamma_n\right\}}{n-1}\right\}\right)\nonumber\\
		&\phantom{=\lim_{n \to \infty} n}-\Phi\left(\sqrt{\frac{n-1}{u(1-u)}}\left\{\gamma_n-u-\frac{1}{2(n-1)}-\frac{\left\{(n-1)\gamma_n\right\}}{n-1}\right\}\right)\Bigg\}\\
		&=\lim_{n \to \infty} n \int_{l_n}^{u_n} \phi(t)dt,
	\end{align*}
	where $l_n=\sqrt{\frac{n-1}{u(1-u)}}\left\{\gamma_n-u-\frac{1}{2(n-1)}-\frac{\left\{(n-1)\gamma_n\right\}}{n-1}\right\}$ and $u_n=\sqrt{\frac{n-1}{u(1-u)}}\left\{\gamma_n-u+\frac{1}{2(n-1)}-\frac{\left\{(n-1)\gamma_n\right\}}{n-1}\right\}$ and $\phi$ denotes the pdf of standard normal distribution. We consider the three cases $u<\gamma$, $u>\gamma$ and $u=\gamma$ separately.\vs
	
	\ni{\bf Case 1:} $u<\gamma$. Using the hypothesis $\lvert \gamma_n-\gamma \rvert=O(1/n)$, we have
	\begin{equation}\label{2-eq-lemma-binomial-approximation-l-n}
		l_n=(\gamma-u)\sqrt{\frac{n-1}{u(1-u)}}+O(1/\sqrt{n}).
	\end{equation}
	Thus, there exists $N_1 \in \mathbb{N}$ such that $0<l_n<u_n$ whenever $n \geq N_1$. Then $n\int_{l_n}^{u_n} \phi(t)dt \leq n(u_n-l_n) \phi(l_n)$, for every $n \geq N_1$. Now, $n(u_n-l_n)=n/\sqrt{(n-1)u(1-u)}$. Also, from \eqref{2-eq-lemma-binomial-approximation-l-n}, we obtain $l_n \geq (\gamma-u)\sqrt{n}/\sqrt{2u(1-u)}+O(1/\sqrt{n})$. Thus, there exists a natural number $N_2$ large enough, such that $l_n \geq (\gamma-u)\sqrt{n}/\sqrt{4u(1-u)}$, whenever $n \geq N_2$. Letting $k_1=(\gamma-u)/\sqrt{4u(1-u)}$, we have $\phi(l_n)=(\sqrt{2\pi})^{-1} \exp{\left(-l_n^2/2\right)} \leq (\sqrt{2\pi})^{-1} \exp{\left(-k_1^2 n/2\right)}$, for every $n \geq N_2$. Thus, for every $n \geq \max{\{N_1,N_2\}}$, we have $n(u_n-l_n) \phi(l_n) \leq n\exp{\left(-k_1^2 n/2\right)}/\sqrt{2\pi(n-1)u(1-u)}$, which goes to $0$, as $n \to \infty$.\vs
	
	\ni{\bf Case 2:} $u>\gamma$. Let $k_2=2(u-\gamma)/u(1-u)$. On using the same chain of arguments as in Case $1$, we obtain $n(u_n-l_n) \phi(u_n) \leq n\exp{\left(-k_2^2 n/2\right)}/\sqrt{2\pi(n-1)u(1-u)}$, which goes to $0$, as $n \to \infty$.\vs
	
	\ni{\bf Case 3:} $u=\gamma$. Since $\phi$ is continuous, by \href{https://en.wikipedia.org/wiki/Mean_value_theorem\#First_mean_value_theorem_for_definite_integrals}{integral mean value theorem}, there exists $m_n \in \left[l_n,u_n\right]$ such that
	\[
	\int_{l_n}^{u_n} \phi(t)dt=(u_n-l_n)\phi(m_n).
	\]
	Clearly, for every $n \in \mathbb{N}$, there exists $\alpha_n \in \left[-1,1\right]$ such that
	\[
	m_n=\sqrt{\frac{n-1}{u(1-u)}}\left\{\gamma_n-u+\frac{\alpha_n}{2(n-1)}-\frac{\left\{(n-1)\gamma_n\right\}}{n-1}\right\}.
	\]
	Observing that $u=\gamma$ and $\lvert \gamma_n-\gamma \rvert=O(1/n)$, we have $m_n=O(1/\sqrt{n})$, which goes to zero, as $n \to \infty$. By continuity of $\phi$, we have $\phi(m_n) \to \phi(0)=1/\sqrt{2\pi}$, as $n \to \infty$. Hence, $\lim_{n \to \infty} n\int_{l_n}^{u_n} \phi(t)dt=\infty$, which completes the proof.
\end{proof}

\end{document}

\begin{proof}[Proof of \eqref{2-eq-counterexample-observations-1} in \cref{2-counterexample}]
	We have
	\begin{align}\label{2-eq-counterexample-EXn2}
		EX_n^2&=\int_{-\infty}^{-\sqrt{n}} x^2 \frac{1}{2n} e^{x+\sqrt{n}}\,dx+\int_{-\sqrt{n}}^{-\sqrt{n}+\frac{1}{2n}} x^2\,dx+\int_{-\sqrt{n}+\frac{1}{2n}}^{a} x^2 \left(\frac{\frac{1}{n}}{a+\sqrt{n}-\frac{1}{2n}}\right)\,dx\nonumber\\
		&+\int_{a}^{a+\frac{1}{n}} x^2 (n-3)\,dx+\int_{a+\frac{1}{n}}^{\infty} x^2 \frac{1}{n}e^{-(x-a-\frac{1}{n})}\,dx.
	\end{align}
	Now, we compute the terms separately and obtain from straightforward calculations
	\begin{align*}
		&\int_{-\infty}^{-\sqrt{n}} x^2 \frac{1}{2n} e^{x+\sqrt{n}}\,dx=\frac{1}{n}+\frac{1}{\sqrt{n}}+\frac{1}{2} \leq \frac{5}{2},\\
		&\int_{-\sqrt{n}}^{-\sqrt{n}+\frac{1}{2n}} x^2\,dx=\frac{1}{2}-\frac{1}{4n\sqrt{n}}+\frac{1}{24n^3} \leq \frac{1}{2}+\frac{1}{24}=\frac{13}{24},\\
		&\int_{-\sqrt{n}+\frac{1}{2n}}^{a} x^2 \left(\frac{\frac{1}{n}}{a+\sqrt{n}-\frac{1}{2n}}\right)\,dx=\frac{a^2}{3n}+\frac{1}{3}+\frac{1}{12n^3}+\frac{2a}{3\sqrt{n}}-\frac{1}{3n\sqrt{n}}-\frac{a}{3n^2}-\frac{a}{\sqrt{n}}+\frac{a}{2n^2}\\
		&\phantom{\,\,\,\,\,\,\,\,}\leq \frac{a^2}{3}+\frac{1}{3}+\frac{1}{12}+\frac{2a}{3}+\frac{a}{2}=\frac{a^2}{3}+\frac{7a}{6}+\frac{5}{12},\\
		&\int_{a}^{a+\frac{1}{n}} x^2 (n-3)\,dx=a^2\left(1-\frac{3}{n}\right)+a(\frac{1}{n}-\frac{3}{n^2})+\frac{1}{3}(\frac{1}{n^2}-\frac{3}{n^3}) \leq a^2+a+\frac{1}{3},\\
		&\int_{a+\frac{1}{n}}^{\infty} x^2 \frac{1}{n}e^{-(x-a-\frac{1}{n})}\,dx=\frac{2}{n}+\frac{1}{n}(a+\frac{1}{n})^2+\frac{2}{n}(a+\frac{1}{n}) \leq a^2+4a+5.
	\end{align*}
	Using these upper bounds, it follows from \eqref{2-eq-counterexample-EXn2} that
	\[
	EX_n^2 \leq \frac{7a^2}{3}+\frac{37a}{6}+\frac{211}{24}<\infty.
	\]
	Similarly, it can be shown that $\lim_{n \to \infty} EY_n^2<\infty$. Let $\epsilon>0$. Now, for $n>1/\epsilon$,
	\begin{align*}
		P(\left\vert X_n-a \right\vert>\epsilon)& \leq P(\left\vert X_n-a \right\vert>1/n) \leq 1-F_{X_n}(a+\frac{1}{n})+F_{X_n}(a)=\frac{3}{n} \to 0,
	\end{align*}
	as $n \to \infty$. Hence $X_n \overset{P}{\to} a$. In a similar manner, we have $Y_n \overset{P}{\to} b$.
\end{proof}

\begin{proof}[Proof of \eqref{2-eq-counterexample-observations-2} in \cref{2-counterexample}]
	Note that
	\[
	F_{Y_n}\left(\frac{-\sqrt{n}}{2}+\frac{1}{2n}\right)<F_{X_n}\left(\frac{-\sqrt{n}}{2}+\frac{1}{2n}\right)<F_{X_n}(b)<F_{Y_n}(b),
	\]
	which implies that $F_{X_n}$ and $F_{Y_n}$, being continuous functions, must cross one another at least once in the interval $(-\frac{\sqrt{n}}{2}+\frac{1}{2n},b)$. Since, both $F_{X_n}$ and $F_{Y_n}$ are straight lines in the interval, they must cross exactly once. Let us denote this point of crossing by $l$. Then, from the construction, we have that $F_{X_n}$ dominates $F_{Y_n}$ in the region $(-\infty,l)$ and is dominated by $F_{Y_n}$ in the region $(l,\infty)$. Thus,
	\[
	A_{0,n}=\left\{t \in (0,1) : F_{Y_n}^{-1}(t)>F_{X_n}^{-1}(t)\right\}=(0,F_{X_n}(l)).
	\]
	Now,
	\[ F_{X_n}(l)=F_{Y_n}(l)>F_{Y_n}\left(-\frac{\sqrt{n}}{2}+\frac{1}{2n}\right)=\frac{1}{n}.
	\]
	In particular, $(\frac{1}{2n},\frac{1}{n}) \subset (0,F_{X_n}(l))$. Also, note that when $t \in (\frac{1}{2n},\frac{1}{n})$, we have
	\[
	-\sqrt{n} < F_{X_n}^{-1}(t) < -\sqrt{n}+\frac{1}{2n} < -\frac{\sqrt{n}}{2} < F_{Y_n}^{-1}(t) < -\frac{\sqrt{n}}{2}+\frac{1}{2n}.
	\]
	Thus,
	\[
	\inf_{t \in (\frac{1}{2n},\frac{1}{n})} (F_{Y_n}^{-1}(t)-F_{X_n}^{-1}(t))^2 = \left\{\left(-\frac{\sqrt{n}}{2}\right)-\left(-\sqrt{n}+\frac{1}{2n}\right)\right\}^2=\left(\frac{\sqrt{n}}{2}-\frac{1}{2n}\right)^2.
	\]
	Combining all these observations, we obtain
	\begin{align*}
		\int_{A_{0,n}} (F_{Y_n}^{-1}(u)-F_{X_n}^{-1}(u))^2 du&=\int_0^{F_{X_n}(l)} (F_{Y_n}^{-1}(u)-F_{X_n}^{-1}(u))^2 du\\
		&>\int_{1/2n}^{1/n} (F_{Y_n}^{-1}(u)-F_{X_n}^{-1}(u))^2 du\\
		&>\int_{1/2n}^{1/n} \left(\frac{\sqrt{n}}{2}-\frac{1}{2n}\right)^2 du\\
		&=\frac{1}{2n} \left(\frac{n}{4}-\frac{1}{2\sqrt{n}}+\frac{1}{4n^2}\right)\\
		&=\frac{1}{8}-\frac{1}{4n\sqrt{n}}+\frac{1}{8n^3} > \frac{3}{32},
	\end{align*}
	for every $n \geq 4$.
\end{proof}

\begin{proof}[Proof of \eqref{2-eq-counterexample-observations-3} in \cref{2-counterexample}]
	We have
	\begin{align*}
		&\int_0^1 (F_{Y_n}^{-1}(u)-F_{X_n}^{-1}(u))^2\,du=\int_0^{1/2n} (F_{Y_n}^{-1}(u)-F_{X_n}^{-1}(u))^2\,du+\int_{1/2n}^{1/n} (F_{Y_n}^{-1}(u)-F_{X_n}^{-1}(u))^2\,du\\
		&+\int_{1/n}^{2/n} (F_{Y_n}^{-1}(u)-F_{X_n}^{-1}(u))^2\,du+\int_{2/n}^{1-1/n} (F_{Y_n}^{-1}(u)-F_{X_n}^{-1}(u))^2\,du+\int_{1-1/n}^1 (F_{Y_n}^{-1}(u)-F_{X_n}^{-1}(u))^2\,du.
	\end{align*}
	From the definitions of $F_{X_n}$ and $F_{Y_n}$, we find the respective quantile functions of $X_n$ and $Y_n$, given by
	\begin{align*}
		&F_{X_n}^{-1}(u)=
		\begin{cases*}
			\ln{(2nu)}-\sqrt{n} & if  $0<x<1/2n$,\\
			u-\sqrt{n}-\frac{1}{2n} & if $1/2n<u<1/n$,\\
			a+(nu-2)(a+\sqrt{n}-1/2n) & if  $1/n<u<2/n$,\\
			a+\frac{u-2/n}{n-3} & if  $2/n<u<1-1/n$,\\
			a+1/n+\ln{(\frac{1}{n(1-u)})} & if  $1-1/n<x<1$,
		\end{cases*}\\
		&\textnormal{and}\\
		&F_{Y_n}^{-1}(u)=
		\begin{cases*}
			\ln{(2nu)}-\sqrt{n}/2 & if  $0<x<1/2n$,\\
			u-\sqrt{n}/2-\frac{1}{2n} & if $1/2n<u<1/n$,\\
			b+(nu-2)(b+\sqrt{n}/2-1/2n) & if  $1/n<u<2/n$,\\
			b+\frac{u-2/n}{n-3} & if  $2/n<u<1-1/n$,\\
			b+1/n+\ln{(\frac{1}{n(1-u)})} & if  $1-1/n<x<1$.
		\end{cases*}
	\end{align*}
	Then, we have from routine calculations
	\begin{align*}
		&\int_0^{1/2n} (F_{Y_n}^{-1}(u)-F_{X_n}^{-1}(u))^2\,du=\frac{1}{8},\\
		&\int_{1/2n}^{1/n} (F_{Y_n}^{-1}(u)-F_{X_n}^{-1}(u))^2\,du=\frac{1}{8},\\
		&\int_{1/n}^{2/n} (F_{Y_n}^{-1}(u)-F_{X_n}^{-1}(u))^2\,du=\frac{b-a}{6\sqrt{n}}+\frac{(b-a)^2}{3n}+\frac{1}{12},\\
		&\int_{2/n}^{1-1/n} (F_{Y_n}^{-1}(u)-F_{X_n}^{-1}(u))^2du=\left(1-\frac{3}{n}\right)(b-a)^2,\\
		&\int_{1-1/n}^1 (F_{Y_n}^{-1}(u)-F_{X_n}^{-1}(u))^2du=\frac{(b-a)^2}{n}.
	\end{align*}
	Adding up all the integrals, we obtain
	\begin{align*}
		\int_0^1 (F_{Y_n}^{-1}(u)-F_{X_n}^{-1}(u))^2du&=\frac{1}{3}+(b-a)^2+\frac{b-a}{6\sqrt{n}}-\frac{5(b-a)^2}{3n}\\
		&\leq \frac{1}{3}+(b-a)^2+\frac{b-a}{6}.
	\end{align*}
	for every $n \in \mathbb{N}$.
\end{proof}

\vs
\begin{table}[h!]
	\centering
	\setlength{\arrayrulewidth}{.05em}
	\setlength{\tabcolsep}{12pt}
	\begin{tabular}{|c|c|c|c|c|c|c|}
		\hline
		$k$ $\rightarrow$ & 1 & 2 & 3 & 4 & 5 & 6 \\
		$n$ $\downarrow$ & & & & & & \\
		\hline
		2 & 0.6680 & 1 & 1 & 1 & 1 & 1 \\
		3 & 0.4453 & 1 & 1 & 1 & 1 & 1 \\
		4 & 0.2969 & 1 & 1 & 1 & 1 & 1 \\
		5 & 0.1980 & 1 & 1 & 1 & 1 & 1 \\
		10 & 0.0261 & 1 & 1 & 1 & 1 & 1 \\
		15 & 0.0035 & 0.3876 & 1 & 1 & 1 & 1 \\
		20 & 0.0005 & 0.0347 & 0.4304 & 1 & 1 & 1 \\
		25 & $<$0.0001 & 0.0036 & 0.0406 & 0.3316 & 1 & 1 \\
		30 & $<$0.0001 & 0.0004 & 0.0042 & 0.0335 & 0.2183 & 1 \\
		50 & $<$0.0001 & $<$0.0001 & $<$0.0001 & $<$0.0001 & $<$0.0001 & 0.0003 \\[1ex]
		\hline
	\end{tabular}
	\vspace*{0.4cm}
	\caption{Rough upper bounds of $\varepsilon_{\mathcal{W}_2}(F_{Y_{n-k+1:n}},F_{X_{n-k+1:n}})$ for varying $n$ and $k$}
	\label{2-table-n-k+1}
\end{table}